\documentclass[12pt]{article}
\usepackage[utf8]{inputenc}
\usepackage{amsmath}
\usepackage{amssymb}
\usepackage[margin = 1.5 in, marginpar = 3 cm]{geometry}
\usepackage{graphicx}
\usepackage{amsthm}
\usepackage{mathtools}
\mathtoolsset{showonlyrefs}
\usepackage{hyperref}
\hypersetup{colorlinks,linkcolor={blue},urlcolor=cyan}

\usepackage{mleftright}

\usepackage{tikz}
\usepackage{theoremref}
\usepackage{wasysym}
\usepackage{harpoon}
\usepackage{float}
\usepackage{caption}
\usepackage{blindtext}
\usepackage{geometry}
\usepackage[title]{appendix}

\def\graphScale{0.29}

\title{Prime Graphs of Locally Torsion Ultagroups, Metanilpotent Groups, and $T$-Solvable Groups}
\author{Alexa Renner\footnote{arenner2@nd.edu; Department of Mathematics, Rose-Hulman Institute of Technology, 5500 Wabash Ave., Terre Haute, IN 47803, USA.}}

\DeclareMathOperator{\pgc}{\overline{\Gamma}}

\newtheorem{counter}{counter}[section]

\theoremstyle{definition}
\newtheorem{definition}[counter]{Definition}
\newtheorem*{definition*}{Definition}

\theoremstyle{plain}
\newtheorem{theorem}[counter]{Theorem}
\newtheorem{lemma}[counter]{Lemma}
\newtheorem{corollary}[counter]{Corollary}
\newtheorem{proposition}[counter]{Proposition}
\newtheorem{fact}[counter]{Fact}

\theoremstyle{remark}
\newtheorem{remark}[counter]{Remark}

\newcommand{\notIsolatedVertexWhite}{
\begin{tikzpicture}[scale=\graphScale]
    \draw (0, 0) -- (1, 0);
    \draw (0, 0) -- (0, 1);
    \draw (1, 0) -- (0, 1);
    \draw[fill=black] (0, 0) circle (0.125);
    \draw[fill=black] (0, 1) circle (0.125);
    \draw[fill=white] (1, 0) circle (0.125);
    \draw[fill=black] (1, 1) circle (0.125);
\end{tikzpicture}
}
\newcommand{\notIsolatedVertexWhiteGrey}{
\begin{tikzpicture}[scale=\graphScale]
    \draw (0, 0) -- (1, 0);
    \draw (0, 0) -- (0, 1);
    \draw (1, 0) -- (0, 1);
    \draw[fill=black] (0, 0) circle (0.125);
    \draw[fill=black] (0, 1) circle (0.125);
    \draw[fill=white] (1, 0) circle (0.125);
    \draw[fill=black] (1, 1) circle (0.125);
\end{tikzpicture}
}

\newcommand{\hamSandwichWhite}{
\begin{tikzpicture}[scale=\graphScale]
    \draw (0, 0) -- (1, 0);
    \draw (0, 0) -- (0, 1);
    \draw (1, 0) -- (1, 1);
    \draw (1, 0) -- (0, 1);
    \draw (0, 1) -- (1, 1);
    \draw[fill=black] (0, 0) circle (0.125);
    \draw[fill=black] (0, 1) circle (0.125);
    \draw[fill=white] (1, 0) circle (0.125);
    \draw[fill=black] (1, 1) circle (0.125);
\end{tikzpicture}
}
\newcommand{\SuzukiThirtyTwoElusiveGraphWhiteBadb}{
\begin{tikzpicture}[scale=\graphScale]
    \draw (0, 0) -- (1, 0);
    \draw (0, 0) -- (0, 1);
    \draw (0, 0) -- (1, 1);
    \draw (1, 0) -- (1, 1);
    \draw[fill=black] (0, 0) circle (0.125);
    \draw[fill=black] (0, 1) circle (0.125);
    \draw[fill=white] (1, 0) circle (0.125);
    \draw[fill=black] (1, 1) circle (0.125);
\end{tikzpicture}
}
\newcommand{\szunrecognizei}{
\begin{tikzpicture}[scale=\graphScale]
    \draw (0, 0) -- (1, 0);
    \draw (0, 1) -- (1,1);
    \draw (1, 0) -- (1, 1);
    \draw (1, 0) -- (0, 1);
    \draw[fill=black] (0, 0) circle (0.125);
    \draw[fill=black] (0, 1) circle (0.125);
    \draw[fill=white] (1, 0) circle (0.125);
    \draw[fill=black] (1, 1) circle (0.125);
\end{tikzpicture}
}

\newcommand{\psltwosixteenimpossiblei}{
\begin{tikzpicture}[scale=\graphScale]
    \draw (1, 1) -- (0, 1);
    \draw (1, 1) -- (0, 0);
    \draw (0, 1) -- (0, 0);
    \draw[fill=black] (0, 0) circle (0.125);
    \draw[fill=black] (0, 1) circle (0.125);
    \draw[fill=white] (1, 0) circle (0.125);
    \draw[fill=black] (1, 1) circle (0.125);
\end{tikzpicture}
}
\newcommand{\psltwosixteenimpossibleii}{
\begin{tikzpicture}[scale=\graphScale]
    \draw (1, 1) -- (0, 1);
    \draw (1, 1) -- (0, 0);
    \draw (1, 1) -- (1, 0);
    \draw (0, 1) -- (0, 0);
    \draw[fill=black] (0, 0) circle (0.125);
    \draw[fill=black] (0, 1) circle (0.125);
    \draw[fill=white] (1, 0) circle (0.125);
    \draw[fill=black] (1, 1) circle (0.125);
\end{tikzpicture}
}
\newcommand{\psltwosixteenimpossibleiii}{
\begin{tikzpicture}[scale=\graphScale]
    \draw (1, 1) -- (0, 1);
    \draw (1, 1) -- (0, 0);
    \draw (1, 1) -- (1, 0);
    \draw (0, 0) -- (1, 0);
    \draw (0, 1) -- (0, 0);
    \draw[fill=black] (0, 0) circle (0.125);
    \draw[fill=black] (0, 1) circle (0.125);
    \draw[fill=white] (1, 0) circle (0.125);
    \draw[fill=black] (1, 1) circle (0.125);
\end{tikzpicture}
}

\begin{document}
\maketitle
\begin{abstract}
    Given a group $G$,   
    the prime graph of $G$, denoted $\Gamma(G)$, is the graph with vertex set the set of primes $p$ such that there exists an element of order $p$ in $G$, and such that there is an edge between primes $p$ and $q$ if and only if there exists an element of order $pq$ in $G$. Let $T$ be nonabelian finite simple group. A $T$-solvable group $G$ is a group such that there exists a finite subnormal series of $G$ such that each factor is either abelian or isomorphic to $T$. In this paper, we continue the classification of the prime graphs of $T$-solvable locally torsion groups and classify prime graphs of locally torsion metanilpotent groups using tools from model theory. We also classify the prime graphs of ultragroups with certain first-order properties and prove results on the number and type of certain classes of groups that can realize a given graph as a prime graph.
\end{abstract}
\hfill\break
\textbf{Keywords:} Gruenberg-Kegel graph, prime graph, ultragroup, pseudofinite group, metanilpotent group

\hfill\break
\textbf{Mathematics Subject Classification:} 20F50 and 03C98
\section*{Introduction}
The prime graph of a (possibly infinite) group is the graph with vertex set $\pi(G) = \{p:p\in\mathbb{P} \text{ and there exists an element }g\in G\text{ with }o(g) = p\}$ and an edge between $p$ and $q$ if and only if there exists an element of order $pq$ in $G$. The prime graphs of finite groups have been studied extensively, in papers such as \cite{baechle}, \cite{angel}, and \cite{Florez}, although there still remain some notable open questions (see \cite{maslovanotebook}). A major open question is whether every finite graph can be realized as the prime graph of a finite group.

The prime graphs of infinite groups have not been as thoroughly studied. In the introduction to \cite{angel}, the authors mention that any countable graph can be realized as a free product of the appropriate cyclic groups. Even though the question of whether every countable graph can be realized as the prime graph of a group has been solved, there are still several other interesting questions that arise, such as classifying the prime graphs of certain classes of groups. 

To be more specific, we need to introduce a few definitions:
\begin{definition}
    A locally torsion group is a group $G$ such that for any finite $\pi\subseteq\pi(G)$, there exists a torsion $H\leq G$ such that $\pgc(G)[\pi] = \pgc(H)$.
\end{definition}
\begin{definition}
    We say a group $G$ has the finite Frobenius property if, for every $p\in\pi(G)$, there are only finitely many $q\in\pi(G)$ such that no element of order $pq$ exists in $G$.
\end{definition}
In \cite{2015REU}, the authors proved the following result:
\begin{theorem}
    \cite[Theorem 2.8]{2015REU}: Let $\Xi$ be a finite graph. $\Xi$ is isomorphic to the prime graph complement of a finite solvable group if and only if $\Xi$ is triangle-free and 3-colorable.
\end{theorem}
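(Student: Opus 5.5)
This is the Gruenberg--Kegel-type classification from \cite{2015REU}, and I would prove the two directions separately.

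\textbf{Necessity.} Let $G$ be finite solvable.

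For triangle-freeness, suppose $p,q,r\in\pi(G)$ are pairwise nonadjacent in $\Gamma(G)$. Take a Hall $\{p,q,r\}$-subgroup $H$. Then $\pi(H)=\{p,q,r\}$ and $H$ contains no elements of order $pq$, $pr$ or $qr$. Thus $H$ is a solvable group whose prime graph has three vertices and no edges. This is impossible by the Higman/Lucido bound: a solvable group whose prime graph has no edges has at most two prime divisors. One can also argue directly. Let $F=F(H)$ be the Fitting subgroup. $F$ is nilpotent, so it is a $p$-group for a single prime $p$, and $C_H(F)\le F$. Then $H/F$ acts on $F$ with every element of $\{q,r\}$-order acting fixed-point-freely. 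Hence a Hall $\{q,r\}$-subgroup of $H/F$ is a Frobenius complement. Its Sylow subgroups are cyclic or generalized quaternion, and its elements of order $q$ and $r$ commute modulo things. This yields an element of order $qr$, a contradiction.

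For $3$-colorability, I would use the Fitting series. Choose a prime $p$, and consider $F(G)$ and $F_2(G)/F(G)$. The plan is to follow \cite{2015REU}. The key lemma says that if $p$ and $q$ are nonadjacent, then, after passing to a Hall $\{p,q\}$-subgroup, that subgroup is Frobenius or $2$-Frobenius, with one prime "below" the other in the Fitting series. Orient each complement edge $p\to q$ when $p$ divides $|F(K)|$ for the Hall $\{p,q\}$-subgroup $K$. Then show that this orientation has no directed path of length $3$. This is the step I expect to be the main obstacle: it needs a detailed analysis of $2$-Frobenius groups, showing the top and bottom primes coincide and that a $p\to q\to r$ chain forces a $\{p,q,r\}$-section with Fitting length $>3$, which is impossible. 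Once no directed path of length $3$ exists, the Gallai--Roy theorem gives chromatic number $\le 3$. Concretely, color each vertex by the length of the longest directed path ending at it, which takes values $0,1,2$.

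\textbf{Sufficiency.} Given a triangle-free graph $\Xi$ with a proper $3$-coloring $V_1\sqcup V_2\sqcup V_3$, I would build $G$ as an iterated semidirect product $N_1\rtimes(N_2\rtimes N_3)$. Here $N_i$ is a direct product of elementary abelian $p$-groups over $p\in V_i$, chosen so that complement edges are realized as Frobenius actions.

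For each complement edge $p-q$ with $p\in V_i$, $q\in V_j$ and $i<j$, take a cyclic $q$-group acting fixed-point-freely on a suitable $\mathbb{F}_p$-module. Existence of such modules uses Dirichlet/Zsigmondy to find field extensions $\mathbb{F}_{p^k}$ with $q\mid p^k-1$. For non-edges of $\Xi$, i.e.\ edges of $\Gamma$, we need centralizing elements; these come from direct factors. Triangle-freeness guarantees that the $2$-Frobenius layering $V_1,V_2,V_3$ never forces a pair that should be adjacent to become nonadjacent. I would verify that elements of order $pq$ exist exactly for non-complement-edges by computing fixed points of each layer. Taking the modules induced and tensored appropriately makes these computations routine.
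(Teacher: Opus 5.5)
The paper gives no proof of this statement; it quotes \cite[Theorem 2.8]{2015REU}. So your outline has to stand on its own. The skeleton is right: Hall subgroups plus Higman's theorem for triangle-freeness, the Fitting orientation plus Gallai--Roy for $3$-colorability, and a three-layer semidirect product for sufficiency. The triangle-free part is fine. The gap is the step you flag as the main obstacle, and the reason you sketch for it is wrong. Directed $2$-paths $p\to q\to r$ cannot be excluded: $A\Gamma L(1,8)=\mathbb{F}_8\rtimes(C_7\rtimes C_3)$ has $\pgc$ the path $2-7-3$, oriented $2\to 7\to 3$, and every orientation of $C_5$ contains one. Moreover, solvable groups have unbounded Fitting length, so ``Fitting length $>3$'' gives no contradiction.

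The no-directed-$3$-path claim can be proved as follows. Suppose $p\to q\to r\to s$, and let $L$ be a Hall $\{p,q,r,s\}$-subgroup.
\begin{itemize}
\item A nontrivial $O_q(L)$, $O_r(L)$ or $O_s(L)$ would lie in the Fitting subgroup of a Hall $\{p,q\}$-, $\{q,r\}$- or $\{r,s\}$-subgroup, contradicting the orientation. Hence $F(L)=O_p(L)$.
\item Every element of order $q$ acts fixed-point-freely on $O_p(L)$. So a Sylow $q$-subgroup of $L$ is a Frobenius complement, hence cyclic or generalized quaternion.
\item The same argument in a Hall $\{q,r,s\}$-subgroup $M$ gives $F(M)=O_q(M)$, which is cyclic or generalized quaternion, with $C_M(F(M))\le F(M)$. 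Therefore $r$ and $s$ both divide $|\operatorname{Aut}(O_q(M))|$.
\item For odd $q$ this automorphism group is cyclic, so a Hall $\{r,s\}$-subgroup of $M$ is cyclic and contains an element of order $rs$. For $q=2$ it is a $\{2,3\}$-group, which cannot be divisible by two distinct odd primes. Either way we get a contradiction.
\end{itemize}
No $2$-Frobenius analysis is needed. Your longest-path coloring also needs acyclicity; this holds because a directed cycle is either a triangle or contains a directed $3$-path.

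Sufficiency is only a sketch, and several of its mechanisms fail as stated.
\begin{itemize}
\item If $q\in V_2$ has a complement-neighbor in $V_1$, every element of order $q$ must act fixed-point-freely on that layer. So the Sylow $q$-subgroup must be a Frobenius complement. It cannot be $\mathbb{F}_{q^k}^+$ with $k\ge 2$. Nor can you use separate cyclic $q$-groups per complement edge or per direct factor, since elements of order $q$ from one copy centralize the other modules.
\item Similarly, an $r\in V_3$ living in several direct factors produces $r$-elements centralizing $C_q$ for a neighbor $q$. You need a single $C_r$ per top prime, acting on $N_2=\prod_{q\in V_2}C_q$ nontrivially exactly on the factors with $q\in N(r)$.
\item This forces $r\mid q-1$, so the labels must be chosen: top primes first, then middle primes via Dirichlet. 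The field extensions $\mathbb{F}_{p^k}$ only serve the bottom layer.
\item For $p\in V_1$ you need an $\mathbb{F}_p$-module for $H=N_2\rtimes N_3$ on which exactly the neighbors of $p$ act fixed-point-freely. This is where triangle-freeness is really used. Since no $q\in N(p)\cap V_2$ is joined to any $r\in N(p)\cap V_3$, the subgroup $B=N_2\rtimes\prod_{r\in N(p)\cap V_3}C_r$ has a cyclic quotient in which all these $C_q$, $C_r$ embed (kill the $C_{q'}$ with $q'\in V_2\setminus N(p)$). Inducing a faithful irreducible $\mathbb{F}_p$-module of that quotient from $B$ to $H$ does the job. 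The remaining top primes permute the summands freely and so have fixed points.
\end{itemize}
Without these specific choices, the ``routine'' fixed-point verification you describe does not go through.
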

This result was generalized to the following in \cite{infinitePGCs}:
\begin{theorem}
    \cite[Theorem 3.11]{infinitePGCs}: Let $\Xi$ be a graph such that each vertex has finite degree. Then, $\Xi$ is isomorphic to the prime graph complement of a solvable torsion group with the finite Frobenius property if and only if $\Xi$ is triangle-free and 3-colorable.
\end{theorem}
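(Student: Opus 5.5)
The plan is to prove the two directions separately, reducing as much as possible to the finite case, \cite[Theorem 2.8]{2015REU}.

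\textbf{Necessity.} Let $G$ be a solvable torsion group with the finite Frobenius property and $\pgc(G)\cong\Xi$. Triangle-freeness and 3-colorability of a graph are both decided by its finite induced subgraphs: for 3-colorability this is the de Bruijn--Erd\H{o}s compactness theorem, which applies to arbitrary graphs. So it suffices to show that every finite induced subgraph $\pgc(G)[\pi]$ is the prime graph complement of a finite solvable group. For finite $\pi\subseteq\pi(G)$, choose an element of order $p$ for each $p\in\pi$, and an element of order $pq$ for each edge $pq$ of $\Gamma(G)[\pi]$. Let $H$ be the subgroup generated by these finitely many elements. The hard case is when $H$ is infinite, since a finitely generated solvable torsion group is finite only after an argument. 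I would argue by induction on derived length: $H/H'$ is a finitely generated abelian torsion group, hence finite, so $H'$ has finite index and is finitely generated by Schreier's lemma. Iterating along the derived series shows $H$ is finite. The fixed elements give $\Gamma(H)\supseteq\Gamma(G)[\pi]$, and $H\le G$ gives the reverse inclusion. Hence $\pgc(H)=\pgc(G)[\pi]$, and the finite theorem shows this subgraph is triangle-free and 3-colorable.

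\textbf{Sufficiency.} Let $\Xi$ be triangle-free and 3-colorable with every vertex of finite degree. Fix a 3-coloring and a bijection from the vertices to distinct primes. Since the vertex set is countable (or we work with an arbitrary set of primes of the right cardinality), enumerate the connected components of $\Xi$. Each component is countable, because degrees are finite. The plan is to realize $\Xi$ as a restricted direct product, or direct limit, of finite solvable groups realizing finite pieces. Then each element lies in a finite subproduct, so the group is torsion and solvable provided the derived lengths are uniformly bounded.

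I would follow the construction behind \cite[Theorem 2.8]{2015REU}. It produces, for each edge of $\Xi$ (a Frobenius-type pair $p,q$), a finite solvable group whose prime graph complement is exactly the edge $\{p,q\}$, with derived length bounded by an absolute constant such as 3 or 4. Disjoint vertices are then all adjacent in $\Gamma$, since a direct product of groups with coprime orders yields elements of order $pq$. The direct product of the edge groups realizes $\Xi$ with isolated vertices filled in by cyclic groups, but only if the edge groups can be arranged to use disjoint prime supports apart from the intended primes. This is the main obstacle. The finite construction uses auxiliary primes: for instance, $q$ must divide $p^k-1$ so that a Frobenius action exists, and these auxiliary primes would create unwanted vertices or kill edges.

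To handle this, I would instead glue along the 3-coloring exactly as in \cite{2015REU}, via a group of the form $(A\rtimes B)\rtimes C$ with $A,B,C$ built from the color classes. Here $A$ is an elementary abelian $p$-group product, $B$ a product of cyclic $q$-groups, and $C$ a product of cyclic $r$-groups, with actions chosen so that precisely the edges of $\Xi$ are fixed-point-free pairs. For infinite $\Xi$, I would take restricted direct products within each color class and use the finite degree hypothesis: each prime $q$ acts nontrivially only on the finitely many $p$-factors adjacent to it. This keeps every element inside a finite subgroup of bounded derived length 3, giving a solvable torsion group. The finite Frobenius property is then immediate, because the non-neighbors of $p$ in $\Gamma$ are exactly its finitely many $\Xi$-neighbors. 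Verifying that no extra fixed-point-free pairs appear is the local finite computation inherited from the finite theorem.
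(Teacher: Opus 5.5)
Your necessity argument is correct and standard. The paper only cites this theorem, but it argues the forward direction of its metanilpotent theorem in Section~\ref{metanp} the same way: finitely many elements of orders $p$ and $pq$ generate a finite solvable group, and then compactness of colorability applies. One correction: $H$ may contain elements of prime order outside $\pi$ (two involutions can generate a dihedral group of any order). So you only get $\pgc(H)[\pi]=\pgc(G)[\pi]$, not $\pgc(H)=\pgc(G)[\pi]$. That is enough, since triangle-freeness and 3-colorability pass to induced subgraphs; alternatively, pass to a Hall $\pi$-subgroup of $H$, as the paper does.

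The sufficiency half has a genuine gap. The whole content of the theorem sits in the phrase ``actions chosen so that precisely the edges of $\Xi$ are fixed-point-free pairs,'' and with your setup such actions need not exist. Write $r$, $q$, $p$ for vertices in the color classes feeding $C$, $B$, $A$.

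First, $C_r$ can act fixed-point-freely on a cyclic $q$-group only if $r\mid q-1$. So the arbitrary bijection with primes that you fix at the outset does not work. You must choose labels so that each $q$ is $\equiv 1$ modulo the product of the labels of its $C$-colored neighbors, using Dirichlet inductively along an enumeration. This is where the finite-degree hypothesis does real work; torsion and derived length at most $3$ come for free from restricted sums.

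Second, $C$ must act on $A\rtimes B$ by automorphisms, so each $A_p$ must be a module for the relevant part of $B\rtimes C$, not just of $B$. If $r-q-p$ is a path in $\Xi$, then $C_r$ cannot act trivially on $A_p$, because it would then centralize $C_q$ (which acts faithfully on $A_p$). So $A_p$ must be a module for the Frobenius group $C_q\rtimes C_r$ on which $C_q$ is fixed-point-free and $C_r$ has fixed points (as $r-p\notin\Xi$). At the same time, the $C$-colored neighbors of $p$ itself must act fixed-point-freely. You never construct such modules. ``Inherited from the finite theorem'' does not supply them either: used as a black box, the finite theorem gives unrelated groups for different finite pieces of $\Xi$, with no compatibility between them.

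Your first idea also fails more basically than you say. If $q$ lies in both $G_{pq}$ and $G_{qr}$, then a $p$-element of the first factor times a $q$-element of the second has order $pq$, destroying the edge $p-q$.

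The gap can be closed along your lines. With labels as above, let $C=\bigoplus C_r$ act on $B=\bigoplus C_q$ through $\operatorname{Aut}(C_q)$ exactly along the $r-q$ edges. For each $p$, define:
\begin{itemize}
\item $Q_p$, the product of the $C_q$ with $q-p\in\Xi$;
\item $S$, the product of the $C_r$ with $r$ adjacent to one of these $q$;
\item $O$, the product of the $C_r$ with $r-p\in\Xi$.
\end{itemize}
Triangle-freeness makes these two sets of $r$'s disjoint and makes $O$ centralize $Q_p$, so $B\rtimes C$ maps onto $(Q_p\rtimes S)\times O$. Take $A_p=\mathrm{Ind}_{Q_p}^{Q_p\rtimes S}(W)\otimes_{\mathbb{F}_p}V$, where $W$ and $V$ are fixed-point-free modules (finite fields of characteristic $p$) for the cyclic groups $Q_p$ and $O$. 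This module is free over $S$, so elements of $S$ have fixed points, while nonidentity elements of $Q_p$ and of $O$ act fixed-point-freely. Then $G=\bigl(\bigoplus A_p\bigr)\rtimes(B\rtimes C)$ is torsion and solvable of derived length at most $3$. A check of the six color-class pairs gives $\pgc(G)\cong\Xi$, and the finite Frobenius property is automatic, as you note.
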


The classification of the prime graphs of finite solvable groups inspired the following definition:
\begin{definition}
    Let $\mathcal{T}$ be a collection of nonabelian finite simple groups. A finite group $G$ is $\mathcal{T}$-solvable if there exists a composition series of $G$ such that each factor is isomorphic to a cyclic group or an element of $\mathcal{T}$. If $\mathcal{T} = \{T\}$, we will refer to $\mathcal{T}$-solvable as $T$-solvable.
\end{definition}
There has been much work towards the classification of the prime graphs of finite $T$-solvable. The authors of \cite{a5paper} and \cite{2022REU} completely classified the prime graphs of $\mathcal{T}$-solvable groups where each $T\in\mathcal{T}$ has $|\pi(T)| = 3$. The case where $|\pi(T)| = 4$ is still open, but \cite{2023REU}, \cite{Suz}, \cite{psl}, and \cite{alland} have classified the prime graph complements of certain $T$-solvable groups where $\pi(T) = 4$. This motivated the following definition:
\begin{definition}
    Let $\mathcal{T}$ be a collection of nonabelian finite simple groups. A group $G$ is $\mathcal{T}$-solvable if there exists a subnormal series of $G$ such that each factor is isomorphic to an abelian group or an element of $\mathcal{T}$. If $\mathcal{T} = \{T\}$, we will refer to $\mathcal{T}$-solvable as $T$-solvable.
\end{definition}
Notice that for finite groups, the definitions of $\mathcal{T}$-solvable coincide. Many of the classification theorems for finite $T$-solvable groups were generalized to torsion and locally torsion $T$-solvable groups with the finite Frobenius property in \cite{infinitePGCs}.

In this paper, we use tools from model theory to extend the classifications of finite $T$-solvable groups to locally torsion groups that were unable to be extended using the algebraic methods presented in \cite{infinitePGCs}; and to classify the prime graphs of other groups of interest. Additionally, we prove some results on the number and type of groups realizing certain prime graphs using model-theoretic tools. As is consistent with \cite{Suz}, \cite{psl}, and \cite{infinitePGCs}, we state our classification results in terms of the prime graph complements rather than the prime graphs. Throughout the paper, we assume choice.

In section \ref{ultra}, we classify the prime graph complements of ultragroups that satisfy certain first-order sentences and pseudofinite groups. In section \ref{metanp}, we classify the prime graph complements of locally torsion metanilpotent groups. In section \ref{tsolv}, we prove some lemmas that will be helpful in constructing locally torsion $T$-solvable groups realizing certain prime graph complements. Section \ref{tsolvconstruction} uses these results to classify the prime graphs of $T$-solvable locally torsion groups with the finite Frobenius property for certain $T$. Section \ref{sec7} proves results on the number of certain types of nonisomorphic groups that have a certain graph as their prime graph, and also provides a result on when an infinite graph is realized by a torsion group. Finally, section \ref{outlook} provides some ideas for future work.
\subsection*{Notation and Conventions}
We adopt the convention $0\in\mathbb{N}$.
\begin{itemize}
    \item Let $\Xi$ be a graph. We denote the vertex set of $\Xi$ by $V(\Xi)$ and the edge set of $\Xi$ by $E(\Xi)$. 
    \item Let $\Xi$ be a graph and let $X\subseteq V(\Xi)$. We denote the subgraph induced by $X$ as $\Xi[X]$, and we denote the neighborhood of $X$ by $N[X]$. We will denote  the open neighborhood of $X$, the set $N[X]\setminus X$, by $N(X)$.
    \item $\mathbb{N}_{\geq k} = \{i:i\in\mathbb{N},i\geq k\}$.
    \item Let $G$ be a group. We denote the order of an element $g\in G$ by $o(g)$.
    \item Let $\mathcal{L}_{gp} = \{^{-1}, 1,\cdot\}$. We will refer to $\mathcal{L}_{gp}$ as the language of groups.
    \item $\mathbb{P}$ denotes the set of primes of $\mathbb{N}$.
    \item Let $G$ be a group. Define $\pi(G) = \{p:p\in\mathbb{P} \text{ and there exists an element }g\in G\text{ with }o(g) = p\}$.
    \item Let $G$ be a group. Define the spectrum of $G$, $\omega(G)$, to be $\{o(g):g\in G\}$.
\end{itemize}
\section{Ultragroups}\label{ultra}
\begin{lemma}\thlabel{uf}
    Let $\mathcal{L}$ be a language. Let $\mathcal{V}$ be a nonprincipal ultrafilter on an infinite set $I$, $\mathcal{M} = \prod_\mathcal{V}\mathcal{M}_\alpha$ an ultraproduct. Let $\sigma_0,\sigma_1,\sigma_2,\cdots$ be $\mathcal{L}$-sentences true in $\mathcal{M}$. There exist $\alpha_0,\alpha_1,\alpha_2,\cdots\in I$ such that, for each $n\in\mathbb{N}$, $\mathcal{M}_{\alpha_n}\vDash\sigma_i$ for all $i\leq n$.
\end{lemma}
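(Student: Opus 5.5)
The plan is to apply Łoś's theorem to finite conjunctions of the given sentences and then pick witnesses one index at a time. For each $n\in\mathbb{N}$, let $\tau_n$ be the $\mathcal{L}$-sentence $\sigma_0\wedge\sigma_1\wedge\cdots\wedge\sigma_n$. Each $\sigma_i$ holds in $\mathcal{M}$, so $\mathcal{M}\vDash\tau_n$. Łoś's theorem then says that the set
\[
  A_n = \{\alpha\in I : \mathcal{M}_\alpha\vDash\tau_n\}
\]
belongs to the ultrafilter $\mathcal{V}$. In particular $A_n$ is nonempty, since $\emptyset\notin\mathcal{V}$. We may therefore choose some $\alpha_n\in A_n$; by the definition of $\tau_n$, this gives $\mathcal{M}_{\alpha_n}\vDash\sigma_i$ for all $i\leq n$. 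Carrying this out for every $n$ (using countable choice, which the paper assumes) produces the required sequence $\alpha_0,\alpha_1,\alpha_2,\ldots$.

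If later applications need the $\alpha_n$ to be pairwise distinct, the choice can be made inductively. Nonprincipality of $\mathcal{V}$ over the infinite set $I$ means every member of $\mathcal{V}$ is infinite: a finite member would, by the ultrafilter property, force some singleton into $\mathcal{V}$, making $\mathcal{V}$ principal. Hence $A_n\setminus\{\alpha_0,\ldots,\alpha_{n-1}\}$ is still in $\mathcal{V}$, so it is nonempty, and we take $\alpha_n$ from it. The sets $A_n$ are also decreasing, which is harmless but not needed.

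No step is a genuine obstacle: the whole content is Łoś's theorem applied to finite conjunctions. The only point needing care is not to try to apply Łoś to the infinite conjunction of all the $\sigma_i$, which is not a first-order sentence. This is exactly why the statement only asks for $\mathcal{M}_{\alpha_n}$ to satisfy the first $n+1$ sentences rather than all of them.
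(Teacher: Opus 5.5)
Your proposal is correct and matches the paper's proof: the paper applies Łoś to each $\sigma_j$ separately and picks $\alpha_n$ from the intersection $\bigcap_{j\leq n}\{\alpha:\mathcal{M}_\alpha\vDash\sigma_j\}\in\mathcal{V}$, which is exactly your set $A_n$ for the conjunction $\tau_n$. Your remark on choosing the $\alpha_n$ pairwise distinct via nonprincipality is also correct, though the lemma does not require it.
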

\begin{proof}
    For each $i\in\mathbb{N}$, let $A_i = \{\alpha:\mathcal{M}_\alpha\vDash\sigma_i\}$, and define $B_i = \bigcap_{j\leq i}A_i$. Each $B_i\in\mathcal{V}$, so each $B_i\neq\varnothing$. For each $i\in\mathbb{N}$, let $\alpha_i\in B_i$. Then, $\mathcal{M}_{\alpha_i}\vDash\sigma_j$ for all $j\leq i$.
\end{proof}
\begin{theorem}\thlabel{ultraclas}
   Let $\Xi$ be a graph. Let $\mathcal{L}$ be a language which includes $\cdot$, and let $\varphi$ be an $\mathcal{L}$-sentence. $\Xi$ is isomorphic to the prime graph complement of a nonprincipal ultragroup over $\mathcal{L}$ in which $\varphi$ holds, where $\cdot$ is interpreted as the group operation, if and only if there exists a labeling of $\Xi_L$ of $\Xi$, a sequence of subgraphs $\{\Lambda_i\}_{i\in\mathbb{N}}$ of $\Xi_L$ such that $\bigcup_{i\in\mathbb{N}}\Lambda_i = \Xi_L$, $\Lambda_i = \Lambda_{i+1}[V(\Lambda_i)]$ for all $i\in\mathbb{N}$, and a sequence of groups $\{G_i\}_{i\in\mathbb{N}}$ such that:
   \begin{itemize}
       \item For all $i\in\mathbb{N}, \Lambda_i$ is an induced subgraph of $ \pgc(G_i)$.
       \item For all $i\in\mathbb{N}, G_i\vDash\varphi$.
       \item For all $p\in\mathbb{P}$, the set $V_p = \{i:p\in V(\pgc(G_i))\setminus V(\Xi_L)\}$ is finite.
   \end{itemize}
\end{theorem}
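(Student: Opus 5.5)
The plan is to use the fact that everything relevant to the prime graph complement is first-order in $\mathcal{L}_{gp}\subseteq\mathcal{L}$. Then apply Łoś's theorem in one direction and \thref{uf} in the other. For a prime $p$, let $\varepsilon_p$ be the sentence $\exists x\,(x\neq 1\wedge x^p=1)$. For primes $p\neq q$, let $\varepsilon_{pq}$ be $\exists x\,(x^{pq}=1\wedge x^p\neq 1\wedge x^q\neq 1)$. A structure satisfies $\varepsilon_p$ exactly when $p\in\pi$ of it, and satisfies $\varepsilon_{pq}$ exactly when it has an element of order $pq$, i.e. when $pq$ is a non-edge of $\pgc$. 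A labeling $\Xi_L$ means identifying $V(\Xi)$ with a set of primes so that $\Xi_L\cong\Xi$. Fix an enumeration $p_0,p_1,p_2,\dots$ of $\mathbb{P}$.

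($\Rightarrow$) Suppose $\mathcal{M}=\prod_{\mathcal{V}}\mathcal{M}_\alpha$ is a nonprincipal ultragroup with $\mathcal{M}\vDash\varphi$ and $\pgc(\mathcal{M})\cong\Xi$. Let $\Xi_L=\pgc(\mathcal{M})$, which is the labeling transported along the isomorphism. For each $n$, let $\sigma_n$ be the conjunction of the following sentences:
\begin{itemize}
\item $\varphi$;
\item for each $i\leq n$, either $\varepsilon_{p_i}$ or $\neg\varepsilon_{p_i}$, according to whether $p_i\in V(\Xi_L)$;
\item for each pair $i<j\leq n$ with $p_i,p_j\in V(\Xi_L)$, either $\varepsilon_{p_ip_j}$ or $\neg\varepsilon_{p_ip_j}$, according to whether $p_ip_j\in E(\Xi_L)$ or not.
\end{itemize}
Each $\sigma_n$ holds in $\mathcal{M}$. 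By \thref{uf} there are $\alpha_n$ with $\mathcal{M}_{\alpha_n}\vDash\sigma_i$ for all $i\leq n$. Put $G_n=\mathcal{M}_{\alpha_n}$ and $\Lambda_n=\Xi_L[V(\Xi_L)\cap\{p_0,\dots,p_n\}]$.

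We then check the required properties in turn.
\begin{itemize}
\item The $\Lambda_n$ are nested induced subgraphs with union $\Xi_L$.
\item $G_n\vDash\varphi$.
\item $\sigma_n$ forces $\Lambda_n$ to be an induced subgraph of $\pgc(G_n)$.
\item If $p=p_k\notin V(\Xi_L)$, then $G_n\vDash\neg\varepsilon_p$ for every $n\geq k$, so $V_p\subseteq\{0,\dots,k-1\}$ is finite.
\end{itemize}

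($\Leftarrow$) Given the labeling, the $\Lambda_i$ and the $G_i$, take $I=\mathbb{N}$ and any nonprincipal ultrafilter $\mathcal{V}$ on $\mathbb{N}$. Such a $\mathcal{V}$ contains all cofinite sets, and exists by choice. Let $\mathcal{M}=\prod_{\mathcal{V}}G_i$.
\begin{itemize}
\item By Łoś, $\mathcal{M}\vDash\varphi$, and $\cdot$ is a group operation in $\mathcal{M}$.
\item If $p\in V(\Xi_L)$, then $p\in V(\Lambda_i)$ for some $i$, and hence for all larger $i$, since $\Lambda_i=\Lambda_{i+1}[V(\Lambda_i)]$ makes the vertex sets increase. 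So $\{i: G_i\vDash\varepsilon_p\}$ is cofinite and $p\in\pi(\mathcal{M})$.
\item If $p\notin V(\Xi_L)$, then $\{i:G_i\vDash\varepsilon_p\}=V_p$ is finite, hence not in $\mathcal{V}$, so $p\notin\pi(\mathcal{M})$.
\item For $p,q\in V(\Xi_L)$, choose $N$ with $p,q\in V(\Lambda_N)$. For every $i\geq N$, adjacency of $p,q$ in $\Lambda_i$ equals adjacency in $\Xi_L$, because the $\Lambda_i$ are nested induced and exhaust $\Xi_L$. It also equals adjacency in $\pgc(G_i)$, because $\Lambda_i$ is induced in $\pgc(G_i)$. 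So the truth value of $\varepsilon_{pq}$ in $G_i$ is constant on a cofinite set, and Łoś transfers it to $\mathcal{M}$.
\end{itemize}
Hence $\pgc(\mathcal{M})=\Xi_L\cong\Xi$.

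The argument is mostly bookkeeping. The points needing care are these:
\begin{itemize}
\item Nonexistence of primes must be encoded in the $\sigma_n$; otherwise the finiteness of $V_p$ fails.
\item Inducedness of the $\Lambda_i$ is exactly what makes edge information stabilize on a cofinite set.
\item The index set in the reverse direction should be $\mathbb{N}$, so that "finite" implies "not in $\mathcal{V}$".
\end{itemize}
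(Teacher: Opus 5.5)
Your proposal is correct and is essentially the paper's argument. In the forward direction you apply \thref{uf} to sentences encoding finite pieces of $\pgc(\mathcal{M})$ together with the nonexistence of elements of order $p\notin V(\Xi_L)$; in the reverse direction you take the ultraproduct over a nonprincipal ultrafilter on $\mathbb{N}$ and apply \L o\'s. Your reverse direction, which uses cofiniteness and the finiteness of $V_p$ directly, is cleaner than the paper's.

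One wording slip: since $\Xi_L$ is a complement graph, $\sigma_n$ should contain $\neg\varepsilon_{p_ip_j}$ exactly when $p_ip_j\in E(\Xi_L)$, i.e.\ whichever of the two holds in $\mathcal{M}$, not the reverse as written.
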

\begin{proof}
    Let $G = \prod_\mathcal{U}G_\alpha$. Let $\{\Lambda_i\}_{i\in\mathbb{N}}$ be any nested sequence of subgraphs such that $\bigcup\Lambda_i = \Xi_L$. Enumerate $\mathbb{P}\setminus \pi(G)$ as $p_0,p_1,\cdots$. For each $i\in\mathbb{N}$, let $\sigma_i$ encode $\Lambda_i$ and let $\tau_i$ be $\neg\exists x(x^{p_i}=1\land(\bigwedge_{j=1}^{p_i-1}x^j\neq 1))$. Then, by \thref{uf}, there exists a sequence $\{\alpha_n\}_{n\in\mathbb{N}}$ such that $G_{\alpha_n}\vDash\varphi\land \sigma_i\land(\bigwedge_{j=0}^i\tau_j)$ if $i\leq n$. 

    Fix a nonprincipal ultrafilter $\mathcal{U}$ on $\mathbb{N}$, and let $G =\prod_\mathcal{U}G_i$. Enumerate $\mathbb{P}\setminus V(\Xi_L)$ as $p_0,p_1,\cdots$. For each $i\in\mathbb{N}$, let $\sigma_i$ encode $\Lambda_i$ and let $\tau_i$ be $\neg\exists x(x^{p_i}=1\land(\bigwedge_{j=1}^{p_i-1}x^j\neq 1))$. Then, because $G_n\vDash\varphi\land \sigma_i\land(\bigwedge_{j\leq i}\tau_j)$ for all $i\leq n,$ $G\vDash \sigma_i$ and $G\vDash\tau_i$ for all $i\in\mathbb{N}$, and $G\vDash\varphi$. By choice of $\tau_i,\sigma_i,\pgc(G) = \Xi_L$.
\end{proof}
\begin{definition}
    A pseudofinite group is an infinite group which satisfies every first order sentence of $\mathcal{L}_{gp}$ that is true in every finite group.
\end{definition}
We now define a new concept which will be useful in this section and \ref{sec7}:
\begin{definition}
    Let $\mathcal{L}$ be a language and let $\mathcal{G},\mathcal{H}$ be $\mathcal{L}$-structures. $\mathcal{G}$ is elementarily equivalent to $\mathcal{G}$ (denoted $\mathcal{G}\equiv\mathcal{H}$) if, for any $\mathcal{L}$-sentence $\varphi, \mathcal{G}\vDash\varphi$ if and only if $\mathcal{H}\vDash\varphi$.
\end{definition}
\begin{lemma}\thlabel{elmequiv}
    Let $T$ be the theory of groups in $\mathcal{L}$ for some language $\mathcal{L}$ containing a binary function symbol and $\mathcal{G},\mathcal{H}$ be $\mathcal{L}$-structures satisfying $T$. Then, if $\mathcal{H}\equiv\mathcal{G}, \pgc(\mathcal{H}) = \pgc(\mathcal{G})$. In particular, if $\mathcal{H}\preceq\mathcal{G}, \pgc(\mathcal{H}) = \pgc(\mathcal{G})$. 
\end{lemma}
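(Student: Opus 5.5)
The plan is to show that every piece of data determining $\pgc(\mathcal{G})$ is expressed by a first-order $\mathcal{L}$-sentence built from the binary function symbol interpreted as the group operation. Elementary equivalence then transfers all of that data between $\mathcal{G}$ and $\mathcal{H}$. Throughout, write $x^n$ for the term $x\cdot x\cdots x$ ($n$ factors). Since the theory $T$ is in force, the identity is definable, by the formula $\forall y(u\cdot y=y)$, so we may use $1$ as an abbreviation even if $\mathcal{L}$ contains no constant for it.

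First, the vertex sets agree. For each prime $p$ let
\[
\theta_p := \exists x\Bigl(x^p=1\land\bigwedge_{j=1}^{p-1}x^j\neq 1\Bigr),
\]
the sentence already used (negated, as $\tau_i$) in the proof of \thref{ultraclas}. A structure satisfying $T$ models $\theta_p$ exactly when it has an element of order $p$. So $p\in\pi(\mathcal{G})$ iff $\mathcal{G}\vDash\theta_p$ iff $\mathcal{H}\vDash\theta_p$ iff $p\in\pi(\mathcal{H})$.

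Second, the edge relations agree. For distinct primes $p,q$ let
\[
\theta_{pq} := \exists x\Bigl(x^{pq}=1\land\bigwedge_{j=1}^{pq-1}x^j\neq 1\Bigr).
\]
This holds exactly when there is an element of order $pq$, i.e. when $p$ and $q$ are adjacent in the prime graph, equivalently nonadjacent in $\pgc$. Elementary equivalence again gives the same truth value in both structures. The vertex sets $\pi(\mathcal{G})=\pi(\mathcal{H})$ and the edge sets of the complements therefore coincide, so the graphs are literally equal as labeled graphs, not merely isomorphic.

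For the final claim, $\mathcal{H}\preceq\mathcal{G}$ implies $\mathcal{H}\equiv\mathcal{G}$ by taking formulas with no free variables in the definition of elementary substructure, so the first part applies.

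There is no real obstacle. The only point needing care is that each condition involves a fixed, finite exponent, so each sentence is a genuine finitary first-order sentence. By contrast, "has an element of infinite order" is not first-order, but it plays no role in $\pgc$.
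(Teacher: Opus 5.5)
Your proposal is correct and follows the paper's argument: the paper likewise notes that, for each $n$, having an element of order $n$ is expressed by a first-order sentence preserved under $\equiv$ (you specialize this to $n=p$ for vertices and $n=pq$ for edges), and then derives the $\preceq$ case from the fact that $\preceq$ implies $\equiv$. Your remark that the identity is definable when $\mathcal{L}$ has no constant symbol for it is a small extra point of care that the paper leaves implicit.
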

\begin{proof}
     Let $n\in\mathbb{N}$. Notice that $\mathcal{G}\vDash\exists x(x^n = 1\land (\bigwedge_{j=1}^{n-1}x^j\neq 1))$ if and only if $\mathcal{H}\vDash\exists x(x^n = 1\land (\bigwedge_{j=1}^{n-1}x^j\neq 1))$. Thus $\mathcal{G}$ contains an element of order $n$ if and only if $\mathcal{H}$ contains an element of order $n$, so $\pgc(\mathcal{G}) = \pgc(\mathcal{H})$. By definition of $\preceq$, if $\mathcal{H}\preceq\mathcal{G}$, then $\mathcal{H}\equiv\mathcal{G}$, so the claim follows.
\end{proof}
\begin{corollary}
   Let $\Xi$ be a graph. $\Xi$ is isomorphic to the prime graph complement of a pseudofinite group if and only if there exists a labeling $\Xi_L$ of $\Xi$, a sequence of induced subgraphs $\{\Lambda_i\}_{i\in\mathbb{N}}$ of $\Xi_L$ such that $\bigcup_{i\in\mathbb{N}}\Lambda_i = \Xi_L$ and for all $i\in\mathbb{N}, \Lambda_i$ is an induced subgraph of $\Lambda_{i+1}$, and a sequence of distinct finite groups $\{G_i\}_{i\in\mathbb{N}}$ such that:
   \begin{itemize}
       \item For all $i\in\mathbb{N}, \Lambda_i$ is an induced subgraph of $\pgc(G_i)$.
       \item For all $p\in\mathbb{P}$, the set $V_p = \{i:p\in V(\pgc(G_i))\setminus V(\Xi_L)\}$ is finite.
   \end{itemize}
\end{corollary}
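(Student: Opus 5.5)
We derive the corollary from \thref{ultraclas} and \thref{elmequiv}, together with the standard characterization of pseudofinite groups: an infinite group is pseudofinite if and only if it is elementarily equivalent to a nonprincipal ultraproduct of finite groups. (An infinite ultraproduct of finite groups satisfies every sentence true in all finite groups by Łoś's theorem. Conversely, if $H$ is pseudofinite, each sentence true in $H$ holds in some finite group. Indexing finite groups by finite subsets of $\mathrm{Th}(H)$ and taking an ultrafilter refining the Fréchet-type filter of "contains $\Sigma$" gives an ultraproduct $\equiv H$.) By \thref{elmequiv}, $\pgc(H)$ equals the prime graph complement of that ultraproduct. So it suffices to characterize prime graph complements of infinite nonprincipal ultraproducts of finite groups.

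For the forward direction, suppose $\Xi\cong\pgc(H)$ with $H$ pseudofinite, and pass to $G=\prod_\mathcal{U}G_\alpha$ with each $G_\alpha$ finite, $\mathcal{U}$ nonprincipal and $G$ infinite. We apply the argument of \thref{ultraclas}. Fix a labeling $\Xi_L=\pgc(G)$ and an exhausting nested sequence of finite induced subgraphs $\Lambda_i$. This is possible since $\pi(G)$ is countable, being a set of primes. Let $\sigma_i$ be the first-order sentence encoding $\Lambda_i$: the existence of elements of each prime order in $V(\Lambda_i)$, and the existence or nonexistence of elements of order $pq$ according to the edges. Let $\tau_i$ assert that there is no element of order $p_i$, where $p_0,p_1,\dots$ enumerates $\mathbb{P}\setminus V(\Xi_L)$. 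Also let $\rho_i$ say that there are at least $i$ elements, which holds in $G$ since $G$ is infinite. By \thref{uf}, we get indices $\alpha_n$ with $G_{\alpha_n}\vDash\sigma_j\wedge\tau_j\wedge\rho_j$ for $j\le n$. Then $\Lambda_n$ is an induced subgraph of $\pgc(G_{\alpha_n})$. A prime $p=p_k\notin V(\Xi_L)$ can lie in $\pi(G_{\alpha_n})$ only for $n<k$, so $V_p$ is finite. The $\rho_n$ force $|G_{\alpha_n}|\to\infty$, so after passing to a subsequence the groups are distinct.

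For the converse, we follow the second paragraph of the proof of \thref{ultraclas}. Take a nonprincipal $\mathcal{U}$ on $\mathbb{N}$ and $G=\prod_\mathcal{U}G_i$. For each edge or nonedge of $\Xi_L$ and each vertex, the relevant sentence holds in all $G_i$ with $i$ large, because $\Lambda_i\subseteq\Lambda_{i+1}$ is induced and exhausting. So by Łoś it holds in $G$. For $p\notin V(\Xi_L)$, finiteness of $V_p$ shows that $G$ has no element of order $p$. For $p,q\in V(\Xi_L)$, the existence of an element of order $pq$ is likewise decided by the eventual behavior of the $G_i$. Hence $\pgc(G)=\Xi_L$. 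Since the $G_i$ are distinct finite groups, their orders are unbounded, so for each $k$ the sentence "at least $k$ elements" holds in all but finitely many $G_i$. Thus $G$ is infinite, and it is pseudofinite by Łoś.

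The main obstacle is the characterization of pseudofinite groups as groups elementarily equivalent to ultraproducts of finite groups. This is standard, but we will either cite it or include the short ultrafilter argument above. A secondary subtlety is ensuring infiniteness of the ultraproduct. In the converse this comes from distinctness; in the forward direction it comes from the extra sentences $\rho_i$.
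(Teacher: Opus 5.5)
Your argument is correct and is essentially the paper's proof, which just says the corollary follows from \thref{elmequiv} and the proof of \thref{ultraclas}. You usefully make explicit what the paper leaves implicit: that a pseudofinite group is elementarily equivalent to a nonprincipal ultraproduct of finite groups, and that the ultraproducts involved are infinite (via the sentences $\rho_i$ in the forward direction and distinctness of the $G_i$ in the converse). One small precision for the converse: what you need is that the $G_i$ are pairwise non-isomorphic, so that only finitely many have order below any bound $k$; unboundedness of the orders alone would not place ``at least $k$ elements'' in every nonprincipal ultrafilter.
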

\begin{proof}
    This follows from \thref{elmequiv} and the proof of \thref{ultraclas}.
\end{proof}

One may wonder whether the prime graph of an ultraproduct of groups $\prod_\mathcal{U}G_i$ (over a nonprincipal ultrafilter $\mathcal{U}$) is always isomorphic to the ultraproduct of the unlabeled prime graphs of the $G_i$ over some ultrafilter on the same index set. This is false in general. As a very simple counterexample, let $\mathcal{U}$ be a nonprincipal ultrafilter over $\mathbb{N}$, enumerate $\mathbb{P}$ as $p_0,p_1,p_2,\cdots$, and consider $\prod_{\mathcal{U}} C_{p_i}$. We claim that $\pgc\left(\prod_{\mathcal{U}} C_{p_i}\right)$ is the empty graph. For contradiction, suppose $\pgc\left(\prod_{\mathcal{U}} C_{p_i}\right)$ had a vertex $p$. Let $\varphi$ be $\exists x(x^p = 1\land(\bigwedge_{j=1}^{p-1}x^j\neq 1))$. We have that $\prod_\mathcal{U}C_{p_i}\vDash\varphi$, so by Łos's theorem, there exists a set $A\in\mathcal{U}$ such that for all $i\in A, G_i\vDash\varphi$. Because $C_{p_i}\vDash\varphi$ if and only if $p = p_i, A = \{p\}$, so $\mathcal{U}$ is principal, a contradiction. Thus, $\pgc\left(\prod_\mathcal{U}G_i\right)$ is the empty graph.

Let $\mathcal{V}$ be a nonprincipal ultrafilter on $\mathbb{N}$. For each $i, $ let $\Lambda_i$ be the unlabeled graph corresponding to $\pgc(C_{p_i})$. For each $i\in\mathbb{N}$, this graph is the graph consisting of a single vertex. Then, let $\psi$ be $\exists v_1\forall v_2(v_1 = v_2)$, and let $A$ be the set of indices in $\mathbb{N}$ on which this holds. Notice that $\Lambda_i\vDash\psi$ for all $i\in\mathbb{N}$, so $A = \mathbb{N}$ which is in $\mathcal{V}$, so $\prod_\mathcal{V}\Lambda_i\vDash\psi$ by Łos's theorem. Thus, $\prod_\mathcal{V}\Lambda_i\vDash\psi$ is the graph on one vertex, and thus $\pgc\left(\prod_{\mathcal{U}} C_{p_i}\right)$ is not isomorphic to $\prod_\mathcal{V}\Lambda_i$ for any nonprincipal ultrafilter $\mathcal{V}$ on $\mathbb{N}$.
\section{Metanilpotent Groups}\label{metanp}
\begin{lemma}\thlabel{2colors}
    Let $G$ be a finite group which is one of the following:
    \begin{itemize}
        \item Metacyclic
        \item Metabelian
        \item Metanilpotent
        \item Supersolvable
    \end{itemize}
     Then $\pgc(G)$ is 2-colorable.
\end{lemma}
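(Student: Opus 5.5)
First reduce to the metanilpotent case. Metacyclic groups are metabelian. Metabelian groups are metanilpotent, since $G'$ is an abelian normal subgroup with abelian quotient. A finite supersolvable group has nilpotent commutator subgroup $G'$, and $G/G'$ is abelian, so it is metanilpotent as well. It therefore suffices to show that if $G$ is finite with a nilpotent normal subgroup $N$ and $G/N$ nilpotent, then $\pgc(G)$ is bipartite, i.e.\ contains no odd cycle.

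Set $N=F(G)$, the Fitting subgroup, which contains any nilpotent normal subgroup, so $G/F(G)$ is still nilpotent. Color each prime $p\in\pi(G)$ with color $A$ if $p\in\pi(F(G))$ and with color $B$ otherwise, so that $p\in\pi(G/F(G))$ only. I will show that there is no edge of $\pgc(G)$ inside either class. This means that whenever $p,q\in\pi(G)$ have the same color, $G$ contains an element of order $pq$.

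\textbf{Class $A$.} Since $F(G)$ is nilpotent, it is the direct product of its Sylow subgroups. Elements of order $p$ and of order $q$ in $F(G)$ commute, and their product has order $pq$. So $pq$ is an edge of $\Gamma(G)$, and there is no complement edge.

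\textbf{Class $B$.} Here $p,q\notin\pi(F(G))$. Let $\bar G=G/F(G)$, which is nilpotent and contains elements of orders $p$ and $q$, hence an element $\bar g$ of order $pq$. Choose a preimage $g$. Its order is $pq\cdot m$ with $m$ dividing the order of an element of $F(G)$, so $\gcd(m,pq)=1$ and $m$ is a $\pi(F(G))$-number. Then $g^m$ has order exactly $pq$. Again there is no complement edge.

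This argument is clean except for one subtlety: the claim that $m$ is coprime to $pq$. The order of $g$ is $pq$ times the order of $g^{pq}\in F(G)$, and the latter order involves only primes of $F(G)$, which excludes $p$ and $q$. Hence every edge of $\pgc(G)$ joins an $A$-vertex to a $B$-vertex, and $\pgc(G)$ is 2-colorable. I expect no serious obstacle; the main point to check is the reduction that each listed class is metanilpotent, especially the supersolvable case, where one uses the standard fact that $G'$ is nilpotent.
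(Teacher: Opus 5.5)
Your proof is correct, but it does not follow the paper. The paper gives no argument of its own and simply cites the remark after Theorem 3.1 of \cite{Florez}. You instead make the lemma self-contained in two steps.
First, you reduce all four classes to the metanilpotent case: metacyclic $\Rightarrow$ metabelian $\Rightarrow$ metanilpotent, and $G'$ is nilpotent for finite supersolvable $G$.
Second, you exhibit an explicit bipartition of $\pi(G)$ into $\pi(F(G))$ and its complement. Two primes in $\pi(F(G))$ are joined in $\Gamma(G)$ because $F(G)$ is the direct product of its Sylow subgroups. Two primes outside $\pi(F(G))$ are joined by taking a preimage of an element of order $pq$ in the nilpotent group $G/F(G)$ and raising it to the power that removes its $\pi(F(G))$-part.

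The citation is shorter, and the same remark also supplies the converse realizability statement, which the paper uses in its supersolvable and metacyclic classifications. Your proof gives independence from the reference and a concrete coloring.

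It also proves more than the finite statement. Nothing about $F(G)$ is essential: you can run the same argument with any nilpotent normal $N$ such that $G/N$ is nilpotent. Using that the torsion elements of a nilpotent group form the direct product of their primary components, this shows $\pgc(H)$ is 2-colorable for every \emph{torsion} metanilpotent group $H$. Since subgroups of metanilpotent groups are metanilpotent, this would let the forward direction of the paper's locally torsion metanilpotent classification skip the detour through a finite subgroup $K$ and its Hall subgroup.
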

\begin{proof}
This follows from the remark after \cite[Theorem 3.1]{Florez}.

\end{proof}
\begin{lemma}\thlabel{cyclicsmall}
    Let $G$ be an infinite cyclic group. Then, $\pi(G) =\varnothing$.
\end{lemma}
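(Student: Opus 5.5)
The plan is to reduce to the standard model of an infinite cyclic group and show directly that it has no nontrivial torsion. Any infinite cyclic group $G = \langle g\rangle$ is isomorphic to $(\mathbb{Z},+)$ via $g^n \mapsto n$. The map $n\mapsto g^n$ is a surjective homomorphism $\mathbb{Z}\to G$. It is also injective: if $g^m = g^n$ with $m\neq n$, then $g^{m-n}=1$, so $g$ has finite order, and then $G=\langle g\rangle$ would be finite, a contradiction. Since $\pi(G)$ depends only on the isomorphism type of $G$, it suffices to show that $\mathbb{Z}$ has no element of prime order.

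For this, suppose some $n\in\mathbb{Z}$ has order $p$ for a prime $p$. Then $pn = 0$ in $\mathbb{Z}$, and since $\mathbb{Z}$ is an integral domain and $p\neq 0$, this forces $n=0$. But $0$ has order $1\neq p$, a contradiction. Hence no prime $p$ lies in $\pi(\mathbb{Z})$, so $\pi(G)=\varnothing$.

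An equivalent phrasing avoids the isomorphism altogether. If $g^{k}$ had order $p$, then $g^{kp}=1$ with $k\neq 0$, so $g$ would have finite order and $G$ would be finite. I would probably present this one-line version.

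There is no real obstacle. The only point needing a sentence of justification is that a generator of an infinite cyclic group has infinite order, which follows because $\langle g\rangle$ has at most $o(g)$ elements whenever $o(g)$ is finite.
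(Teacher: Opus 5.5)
Your proof is correct. The paper states this lemma without any proof, treating it as evident, and your one-line version is exactly the standard fact it takes for granted: if $g^{k}$ had prime order $p$, then $k\neq 0$ and $g^{kp}=1$, so $g$ has finite order and $\langle g\rangle$ is finite. The detour through $(\mathbb{Z},+)$ is harmless but not needed.
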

\begin{lemma}\thlabel{supersolv}
    Let $G$ be a supersolvable group. Then, $\pi(G)$ is finite.
\end{lemma}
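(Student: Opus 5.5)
We prove that a supersolvable group $G$ has only finitely many primes $p$ admitting an element of order $p$. The approach is to induct along a finite normal series with cyclic factors.

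By definition, $G$ has a finite normal series
\[
1 = N_0 \trianglelefteq N_1 \trianglelefteq \cdots \trianglelefteq N_k = G
\]
with each $N_i \trianglelefteq G$ and each factor $N_{i+1}/N_i$ cyclic. We argue by induction on $i$ that $\pi(N_i)$ is finite. The base case $N_0 = 1$ is trivial, since $\pi(N_0)=\varnothing$.

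For the inductive step, let $g \in N_{i+1}$ have prime order $p$, and consider its image $gN_i$ in the cyclic group $C = N_{i+1}/N_i$.
\begin{itemize}
\item If $gN_i$ is trivial, then $g \in N_i$, so $p \in \pi(N_i)$.
\item Otherwise the order of $gN_i$ divides $p$ and is not $1$, so $C$ contains an element of order exactly $p$. If $C$ is infinite cyclic, \thref{cyclicsmall} says this cannot happen. If $C$ is finite cyclic of order $m$, then $p \mid m$.
\end{itemize}
Hence
\[
\pi(N_{i+1}) \subseteq \pi(N_i) \cup \pi(N_{i+1}/N_i),
\]
and $\pi$ of a cyclic group is finite: it is empty in the infinite case and equals the set of prime divisors of $m$ in the finite case. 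Finite unions of finite sets are finite, so the induction closes and $\pi(G)=\pi(N_k)$ is finite.

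Strictly speaking, \thref{cyclicsmall} appears without proof. If needed, it follows because $\mathbb{Z}$ is torsion-free: a nonzero $n$ satisfies $pn\ne 0$ for every prime $p$.

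The only point needing care is the inclusion $\pi(N_{i+1})\subseteq \pi(N_i)\cup\pi(N_{i+1}/N_i)$. It uses just two facts: the order of an image divides the order of the element, and an element of prime order either lies in the kernel or maps to an element of the same prime order. This argument in fact works for any group with a finite subnormal series whose factors have finite $\pi$, that is, for polycyclic groups in general; supersolvability is more than is needed.
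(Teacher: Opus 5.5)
Your proof is correct and takes essentially the same approach as the paper. The paper argues by contradiction: it picks a prime outside the finite set $\bigcup_i \pi(N_{i+1}/N_i)$ and looks at the first term of the series containing an element of that order. Your inclusion $\pi(N_{i+1})\subseteq\pi(N_i)\cup\pi(N_{i+1}/N_i)$ is the same observation phrased as a direct induction.
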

\begin{proof}
    Let $1 = N_0\unlhd\cdots\unlhd N_k = G$ be a subnormal series of $G$ such that, for all $0\leq i\leq k-1, N_{i+1}/N_i$ is cyclic. For contradiction, suppose $\pi(G)$ were infinite. Notice that $\bigcup_{0\leq i\leq k-1}\pi(N_{i+1}/N_i)$ is finite by \thref{cyclicsmall}, so there exists $p\in\pi(G)\setminus(\bigcup_{0\leq i\leq k-1}\pi(N_{i+1}/N_i))$. Let $g\in G$ have order $p$, and let $i\in\mathbb{N}$ be the $i\in\mathbb{N}$ such that $g\not\in N_i, g\in N_{i+1}$. Then, $o(gN_i) = p\in\pi(N_{i+1}/N_i)$, a contradiction.
\end{proof}
\begin{theorem}\thlabel{supersolvthm}
    Let $\Xi$ be a graph. $\Xi$ is the prime graph complement of a locally torsion supersolvable group if and only if $\Xi$ is finite and 2-colorable.
\end{theorem}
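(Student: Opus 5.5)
We prove the two directions separately. For the forward direction, let $G$ be a locally torsion supersolvable group with $\pgc(G)\cong\Xi$. By \thref{supersolv}, $\pi(G)$ is finite, so $\Xi$ is finite. Now apply the definition of locally torsion with $\pi = \pi(G)$. It gives a torsion subgroup $H\leq G$ with $\pgc(H) = \pgc(G)[\pi(G)] = \pgc(G)$. Subgroups of supersolvable groups are supersolvable (intersect a normal cyclic series with $H$), so $H$ is a torsion supersolvable group. A supersolvable group is polycyclic, hence finitely generated, and a torsion polycyclic group is finite: each cyclic factor of the series is torsion, hence finite. So $H$ is a finite supersolvable group, and by \thref{2colors}, $\pgc(H)=\pgc(G)$ is 2-colorable.

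For the converse, let $\Xi$ be finite and 2-colorable, with color classes $A$ and $B$. Label the vertices by distinct primes so that the construction below works. The natural candidate is a direct product of Frobenius-type groups $C_p\rtimes C_q$, one for each edge $\{p,q\}$ of $\Xi$, chosen so that no element of order $pq$ appears. The danger is that elements of order $pq$ can arise across factors: if $p$ lies in two factors, one of them supplies the $p$-part of an element and another supplies the $q$-part. Each prime therefore needs a single home. I would try the following.

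\emph{Construction.} Choose the labeling so that $A=\{p_1,\dots,p_m\}$ and $B=\{q_1,\dots,q_n\}$, with every $q\in B$ dividing $p-1$ for every $p\in A$ adjacent to it; Dirichlet's theorem lets us pick the $q$'s first and then the $p$'s. Set $P=\prod_{p\in A}C_p$, which is cyclic and normal, and $Q=\prod_{q\in B}C_q$, which is cyclic. For each pair, let $C_q$ act on $C_p$ by a nontrivial automorphism if $\{p,q\}$ is an edge of $\Xi$, and trivially otherwise. These actions commute because $\mathrm{Aut}(C_p)$ is abelian, so $Q$ acts on $P$ and we may form $G=P\rtimes Q$. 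This $G$ is finite and metacyclic with the normal series $1\leq P\leq G$, and refining through the $C_{p_i}$ gives a normal series with cyclic factors, so $G$ is supersolvable. Being finite, $G$ is locally torsion.

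\emph{Verification.} We must check that $\pgc(G)=\Xi$. Since $P$ and $Q$ are cyclic, primes within $A$ are pairwise non-adjacent in $\pgc(G)$, and likewise within $B$. For $p\in A$ and $q\in B$, an element of order $pq$ is, up to conjugacy, $xy$ with $x\in C_p$ and $y$ of order $q$ centralizing $x$; by Hall's theorem every $q$-element is conjugate into $Q$, so we may take $y\in C_q\leq Q$. Such an element exists exactly when $C_q$ acts trivially on $C_p$, that is, exactly when $\{p,q\}$ is not an edge of $\Xi$. Hence $\pgc(G)\cong\Xi$, and isolated vertices are handled automatically.

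The main obstacle is this last verification: showing that an element of order $pq$ forces a $q$-element acting trivially on $C_p$ itself, and not merely on some other factor of $P$. Since $P$ is the direct product of the $Q$-invariant subgroups $C_{p_i}$ and a $q$-element $y\in Q$ centralizes an element of $P$ of order $p$ exactly when it centralizes $C_p$, this reduces to a coprime-action argument.
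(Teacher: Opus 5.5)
Your forward direction is the paper's argument: $\pi(G)$ is finite, the locally torsion condition with $\pi=\pi(G)$ gives a torsion supersolvable $H\le G$ with $\pgc(H)=\pgc(G)$, $H$ is finite because its cyclic factors are torsion, and finite supersolvable groups have 2-colorable prime graph complements.

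For the converse the routes differ. The paper simply cites the remark after Theorem 3.1 of Florez et al.\ for a finite supersolvable group realizing any finite 2-colorable graph. You instead build one explicitly, $C_{\prod_{p\in A}p}\rtimes C_{\prod_{q\in B}q}$, using Dirichlet to choose labels with $q\mid p-1$ along edges. The construction is correct, and the verification is easier than your last paragraph suggests. Every element of order $p\in A$ lies in the unique subgroup $C_p$ of the cyclic normal Hall subgroup $P$. Sylow's theorem already conjugates any element of order $q$ into $C_q\le Q$. A nontrivial automorphism of $C_p$ fixes only the identity. So no coprime-action argument is needed.

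Your route is self-contained, and the group it produces is metacyclic, so it also gives the converse of the metacyclic corollary directly. It also shows why the theorem must be read up to isomorphism: the freedom to choose primes with $q\mid p-1$ is essential. For example, no supersolvable group has $\pgc$ equal to the labeled edge $3-5$. A $3$-element acts trivially on every chief factor $C_5$, hence centralizes the normal Sylow $5$-subgroup. The paper's citation is shorter but hides this.

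One small point: you infer that $G$ is locally torsion from finiteness alone. That works under the reading $\pgc(H)[\pi]=\pgc(G)[\pi]$ that the paper uses inside its proofs. The definition as literally stated, however, demands $\pgc(H)=\pgc(G)[\pi]$, hence $\pi(H)=\pi$, and finiteness alone does not give this: $A_5$ has no subgroup $H$ with $\pi(H)=\{3,5\}$. For your group the fix is immediate. Given $\pi$, take $H=\prod_{p\in A\cap\pi}C_p\rtimes\prod_{q\in B\cap\pi}C_q$; your verification shows $\pgc(H)=\Xi[\pi]$. The paper's own proof passes over the same step.
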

\begin{proof}
    Let $G$ be a supersolvable locally torsion group. By \thref{supersolv}, $\pi(G) = V(\pgc(G))$ is finite, so there exists a torsion $H\leq G$ such that $H[\pi(G)] = \pgc(G)$. Because $H\leq G$, $H$ is supersolvable. Let $1 = N_0\unlhd\cdots\unlhd N_k = G$ be a subnormal series of $H$ such that, for all $0\leq i\leq k-1, N_{i+1}/N_i$ is cyclic. By \thref{cyclicsmall}, $N_{i+1}/N_i$ is finite for all $0\leq i\leq k-1$, so $H$ is finite. Applying the remark after \cite[Theorem 3.1]{Florez}, $\pgc(H) =\pgc(G)$ is 2-colorable.

    Now suppose $\Xi$ is finite and 2-colorable. By the remark after \cite[Theorem 3.1]{Florez}, there exists a finite supersolvable group $G$ realizing $\Xi$.
\end{proof}
\begin{corollary}
    Let $\Xi$ be an unlabeled graph. $\Xi$ is the prime graph complement of a locally torsion metacyclic group if and only if $\Xi$ is finite and 2-colorable.
\end{corollary}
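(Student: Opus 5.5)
The plan is to run the argument of \thref{supersolvthm} in both directions, using that metacyclic groups are a special case of supersolvable groups except possibly for finiteness issues. A group $G$ is metacyclic if it has a cyclic normal subgroup $N$ with $G/N$ cyclic, so $1\unlhd N\unlhd G$ is a normal series with cyclic factors. Hence every metacyclic group is supersolvable. (Polycyclic groups with a \emph{normal} series with cyclic factors are exactly what the definition of supersolvable in \thref{supersolv} uses.)

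For the forward direction, let $G$ be a locally torsion metacyclic group. Since $G$ is supersolvable, \thref{supersolvthm} applies directly and gives that $\pgc(G)$ is finite and 2-colorable. If one wants to avoid relying on supersolvability, I would argue directly, and this is the only step needing any care.
\begin{itemize}
\item \textbf{Finiteness of $\pi(G)$.} The series $1\unlhd N\unlhd G$ has cyclic factors, so by \thref{cyclicsmall} each factor contributes at most finitely many primes. The argument of \thref{supersolv} then shows that $\pi(G)$ is finite.
\item \textbf{Reduction to a finite subgroup.} Local torsion provides a torsion $H\le G$ with $\pgc(H)=\pgc(G)$. Then $H$ is metacyclic: $H\cap N$ is cyclic and normal in $H$, and $H/(H\cap N)\cong HN/N$ is cyclic.
\item \textbf{$H$ is finite.} A torsion cyclic group is finite, so both factors of $H$ are finite, and hence $H$ is finite.
\end{itemize}
\thref{2colors} then gives that $\pgc(H)=\pgc(G)$ is 2-colorable.

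For the converse, let $\Xi$ be finite and 2-colorable. I need a finite metacyclic group realizing $\Xi$, since finite groups are trivially locally torsion. The remark after \cite[Theorem 3.1]{Florez} gives a realizing group, but it is only stated to be supersolvable, so I would check that the construction can be taken metacyclic. Write the 2-coloring as $V(\Xi)=A\sqcup B$, where all edges of $\Xi$ (the non-edges of the prime graph) go between $A$ and $B$. The intended group is a direct product of the following pieces:
\begin{itemize}
\item for each edge $\{p,q\}$ with $p\in A$, $q\in B$, a Frobenius-type group $C_{r}\rtimes C_{s}$, built from suitable powers of $p$ and $q$;
\item cyclic factors for the remaining primes.
\end{itemize}

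The main obstacle is that a direct product of metacyclic groups need not be metacyclic. I would therefore take the direct product of all the pieces and check that it is metacyclic. Group the normal cyclic parts into one cyclic group $C$ and the acting parts into one cyclic group $D$, choosing the orders coprime so that both products remain cyclic.

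The difficulty is that for a single prime $p$ adjacent (in $\Xi$) to several primes, one must realize every required Frobenius action with one cyclic $p$-part, without creating elements of order $pq$ for non-edges in $\Xi$ or destroying commuting pairs. I would handle this orientation by choosing $p\in A$ to always lie in the kernel $C$ and $q\in B$ always in the complement $D$. I would then choose $|C_p|=p^{a}$ so that every $q$ adjacent to $p$ divides $p^{a}-1$, letting $D_q$ act fixed-point-freely on $C_p$ exactly for adjacent pairs and trivially otherwise. The action of $D=\prod D_q$ on $C=\prod C_p$ is then an automorphism of each cyclic $C_p$ independently, so $C\rtimes D$ is metacyclic. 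Its elements of order $pq$ exist precisely when $q$ acts trivially on $C_p$, giving prime graph complement $\Xi$.
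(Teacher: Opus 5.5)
Your forward direction is fine and is essentially the paper's argument: metacyclic groups are supersolvable, so \thref{supersolvthm} applies. Your direct variant through a finite metacyclic subgroup $H$ also works.

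\textbf{The gap is in the converse.} It occurs at the step where $D_q$ is to act fixed-point-freely on the cyclic group $C_p$ of order $p^a$ because $q\mid p^a-1$. The automorphism group of a cyclic group of order $p^a$ is $(\mathbb{Z}/p^a\mathbb{Z})^\times$, of order $p^{a-1}(p-1)$. So for a prime $q\neq p$, a nontrivial automorphism of order $q$ exists if and only if $q\mid p-1$, and increasing $a$ never helps. For example, $3\mid 2^2-1$, but $\operatorname{Aut}(C_4)\cong C_2$. The divisibility $q\mid p^a-1$ is the right condition for the elementary abelian group $\mathbb{F}_{p^a}^+$, as in the proof of \thref{metabelian}. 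That group is not cyclic, however, so it produces a metabelian group, not a metacyclic one.

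No choice of exponents repairs this, because for a fixed labeling the realization can be impossible. Take the labeled graph on $\{5,7\}$ with the single edge $5-7$, and let $G$ be a finite metacyclic group with $N$ cyclic and normal and $G/N$ cyclic. It cannot realize this graph:
\begin{itemize}
\item If both $5$ and $7$ divide $|N|$, then $N$ itself has an element of order $35$.
\item If exactly one of them divides $|N|$, elements of the other prime order centralize the cyclic normal Sylow subgroup of $N$ for the first prime, since $7\nmid 5-1$ and $5\nmid 7-1$. This gives an element of order $35$.
\item If neither divides $|N|$, a Hall $\{5,7\}$-subgroup meets $N$ trivially and so embeds in the cyclic group $G/N$.
\end{itemize}

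\textbf{The missing idea.} The statement is about an unlabeled $\Xi$, so you must choose the labeling to make the actions exist.
\begin{itemize}
\item Pick distinct primes for the vertices of $B$.
\item By Dirichlet's theorem, pick distinct primes $p\equiv 1\pmod{\prod_{q\in B}q}$ for the vertices of $A$.
\item Take $|C_p|=p$ and $|D_q|=q$.
\end{itemize}
Now $\operatorname{Aut}(C_p)\cong C_{p-1}$ has a unique subgroup of order $q$ for each $q\in B$. Send $D_q$ onto it when $p-q\in\Xi$, and trivially otherwise. This defines a homomorphism $D\to\prod_{p\in A}\operatorname{Aut}(C_p)=\operatorname{Aut}(C)$, and a nontrivial automorphism of $C_p$ of order $q$ fixes only the identity.

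With this choice your remaining verification goes through:
\begin{itemize}
\item $C\rtimes D$ is metacyclic.
\item Cyclicity of $C$ and of $D$ puts no edges of $\Xi$ inside $A$ or inside $B$.
\item For non-adjacent $p,q$, $D_q$ centralizes $C_p$, giving an element of order $pq$.
\item For adjacent $p,q$, every element of order $p$ lies in $C_p$. Every element of order $q$ acts on $C_p$ as a nontrivial element of $D_q$, hence fixed-point-freely, so there is no element of order $pq$.
\end{itemize}
Repaired in this way, your converse is a self-contained construction, where the paper simply cites the remark after \cite[Theorem 3.1]{Florez}.
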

\begin{proof}
    By \thref{supersolvthm} and the remark after \cite[Theorem 3.1]{Florez}.
\end{proof}
\begin{definition}
    Let $G$ be a finite group and $\pi$ a set of primes. A \textbf{Hall }$\pi$\textbf{-subgroup} of $G$ is a subgroup $H\leq G$ such that for any $p\in \pi, p\nmid |G/H|$ and $\pi(H)\subseteq\pi$.
\end{definition}
Recall that Hall $\pi$-subgroups are guaranteed to exist in finite solvable groups.
\begin{theorem}\thlabel{metabelian}
Let $\Xi$ be a countable graph. $\Xi$ is isomorphic to the prime graph complement of a locally torsion metanilpotent group if and only if $\Xi$ is 2-colorable.
\end{theorem}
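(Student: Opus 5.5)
The proof splits into the two directions.

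\emph{Forward direction.} Let $G$ be a locally torsion metanilpotent group and suppose $\pgc(G)$ is not 2-colorable. Then $\pgc(G)$ contains an odd cycle, and that cycle lies on a finite vertex set $\pi \subseteq \pi(G)$. By the definition of locally torsion, there is a torsion $H \leq G$ with $\pgc(G)[\pi] = \pgc(H)$. Metanilpotency passes to subgroups: if $N \unlhd G$ is nilpotent with $G/N$ nilpotent, then $H \cap N$ is nilpotent and $H/(H\cap N)$ embeds in $G/N$. So $H$ is a torsion metanilpotent group whose prime graph complement has exactly the vertex set $\pi$.

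Next, for each edge $\{p,q\}$ of the cycle and each $r \in \pi$, pick witnessing elements of order $r$. Let $K \leq H$ be the subgroup generated by finitely many such elements. The plan is to argue that $K$ is finite. Since $K \cap N$ is the normal closure of finitely many torsion elements of $K$, and $K/(K\cap N)$ is a finitely generated torsion nilpotent group, the latter is finite. Then $K \cap N$ is finitely generated (finite index in a finitely generated group), torsion and nilpotent, hence finite. Thus $K$ is finite.

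In $K$ every prime of $\pi$ is still present, and any element of order $pq$ in $K$ lies in $H$. So the odd cycle survives as a set of non-edges of $\Gamma(K)$, i.e.\ it appears in $\pgc(K)$, though possibly with extra edges. This is the delicate point. An odd cycle in the complement need not be induced, but any graph containing an odd cycle as a subgraph is non-2-colorable. Hence $\pgc(K)$ is not 2-colorable, while $K$ is a finite metanilpotent group. This contradicts \thref{2colors}.

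\emph{Converse direction.} Let $\Xi$ be countable and 2-colorable, with a labeling $\Xi_L$ by primes and a bipartition $V(\Xi_L) = A \sqcup B$. For each finite induced subgraph on $\pi_n$, the remark after \cite[Theorem 3.1]{Florez} (used in \thref{supersolvthm}) gives a finite metanilpotent group $G_n$ realizing $\Xi_L[\pi_n]$ with exactly that labeling. Concretely, I would build it as a direct product of Frobenius-type groups $C_p \rtimes C_q$ for the edges, with $q \mid p-1$, choosing the labeling suitably or via Dirichlet. A direct product of such groups is metanilpotent with uniformly bounded class: each factor has a normal abelian subgroup with abelian quotient, so the product is metabelian.

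I would then take $G = \bigoplus_n$ of building blocks. Concretely, for each edge $\{p,q\}$ of $\Xi_L$ with $p \in A$, $q\in B$, include a block that contains elements of orders $p$ and $q$ but none of order $pq$. For each isolated vertex include a cyclic factor. For the direct sum to work, two primes $p,q$ must be non-adjacent in $\pgc$ exactly when some element has order $pq$, and in a direct sum that happens automatically whenever $p$ and $q$ appear in different coordinates. So the construction instead needs one block per vertex pair containing both primes, with each block realizing the induced subgraph on its primes.

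The obstacle is exactly this: keeping non-edges across blocks while still having elements of order $pq$ in the restricted direct sum. Fortunately, direct sums only create elements of order $pq$, i.e.\ they only remove complement edges, so the real issue is just to realize each complement edge $\{p,q\}$. The cleanest route is to realize the whole graph at once. Take the restricted direct product of the abelian $A$-part $M = \bigoplus_{p\in A} C_p^{k_p}$ and the abelian $B$-part, with $B$-elements acting fixed-point-freely on the $C_p$ for $p$ adjacent to $q$ and trivially otherwise. Since edges only go between $A$ and $B$, the group $M \rtimes \bigoplus_{q\in B} C_q$ is metabelian and locally finite, hence locally torsion and metanilpotent.

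Two things remain to check. First, the primes $q \in B$ must be chosen or embedded (e.g.\ $C_q$ acting on $C_p^{k}$ with $k$ the order of $p$ mod $q$) so that the actions exist. Second, no element of order $pq$ may exist for adjacent $p,q$, and this should hold by fixed-point-freeness. I expect verifying that this semidirect product yields exactly $\Xi_L$ to be the main remaining work.
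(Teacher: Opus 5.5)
Your forward direction is correct and somewhat leaner than the paper's. The paper takes a Hall $V(\Delta)$-subgroup of the finite group $K$ so that $\pgc(M)=\Delta$ exactly, for every finite induced $\Delta$, and then invokes a compactness lemma for colorings. You only need that an odd cycle of $\pgc(H)$ survives in $\pgc(K)$, which holds because $K\le H$ and $\pi(K)=\pi$ give $\pgc(K)\supseteq\pgc(H)$. (Your claim that $K\cap N$ is the normal closure of finitely many elements is unjustified. It is also unnecessary: finiteness of $K/(K\cap N)$ plus Schreier already gives the finiteness of $K$.)

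The converse, however, is not yet a proof. After abandoning blockwise direct sums, you reach the right shape, $G=\bigl(\bigoplus_{p\in A}V_p\bigr)\rtimes\bigl(\bigoplus_{q\in B}C_q\bigr)$. (Note that direct sums creating elements of order $pq$ across coordinates is exactly why they fail, not a relief.) But everything hinges on the step you defer. For each $p\in A$ you need a single elementary abelian $p$-group $V_p$ on which every $C_q$ with $q$ adjacent to $p$ acts, with these actions commuting. Each nontrivial element of each such $C_q$ must fix no nonzero vector of all of $V_p$: if a generator $b$ of $C_q$ fixes $0\neq v\in V_p$, then $vb$ has order $pq$ and the edge $p-q$ is lost.

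Your concrete suggestion, $C_q$ acting on $C_p^k$ with $k=\operatorname{ord}_q(p)$, handles one $q$ at a time. A direct sum of such pieces over the neighbors of $p$ fails for the reason just given. Moreover, $\Xi$ is only assumed countable, so a vertex may have infinitely many neighbors. Then no finite $V_p$ works, since $\operatorname{Aut}(V_p)$ is finite and has elements of only finitely many prime orders.

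The missing ingredient is the one the paper uses in each finite stage $G_i$. Let $V_p$ be the additive group of a field of characteristic $p$ containing primitive $q$-th roots of unity for all neighbors $q$, and let $C_q$ act by multiplication by such a root. For example, take $\mathbb{F}_{p^k}$ with $k$ a common multiple of the $\operatorname{ord}_q(p)$, or $\overline{\mathbb{F}}_p$ in general. Scalars commute, and a scalar $\neq 1$ has no nonzero fixed points.

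You then still must rule out other elements of order $pq$. If $o(g)=pq$, then $g^q\in V_p\setminus\{0\}$, since $\bigoplus_{q'\in B}C_{q'}$ has no $p$-torsion. Also $g^p$ has component a generator of $C_q$ in $\bigoplus_{q'\in B}C_{q'}$, since $M=\bigoplus_{p'\in A}V_{p'}$ has no $q$-torsion. As $M$ is abelian, $g^p$ acts on $M$ through that generator, which is fixed-point-free on $V_p$. So $g^p$ cannot centralize $g^q$, a contradiction.

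With this filled in, your route yields a genuinely torsion (indeed locally finite) metabelian group with $\pgc(G)=\Xi_L$ directly. That is stronger than the paper's locally torsion conclusion and avoids its compactness argument with the predicates $N$ and $H_\pi$. As written, though, the decisive construction in the converse is left unverified, and the one explicit suggestion for it would fail.
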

\begin{proof}
First consider the forwards direction. Let $G$ be a locally torsion metanilpotent group, define $\Xi$ to be the unlabeled prime graph complement of $G$, and let $\Delta$ be a finite induced subgraph of $\pgc(G)$. Let $H\leq G$ be a torsion subgroup of $G$ such that $\pgc(H)[V(\Delta)] = \pgc(G)[V(\Delta)]$. For each $p\in V(\Delta)$, let $g_p\in H$ be an element of order $p$, and for each $p-q\not\in \Delta$, let $g_{pq}\in H$ be an element of order $pq$. By \cite[Lemma 2.4]{infinitePGCs},  $K = \langle g_p, g_{pq}:p\in V(\Delta), p-q\not\in\Delta\rangle$ is a finite solvable group, and by the fact that $K\leq G$, $K$ is metanilpotent. Let $M$ be a Hall $V(\Delta)$-subgroup of $K$. Notice that $\pgc(M) =\pgc(H)[V(\Delta)]= \Delta$. Because $M$ is metanilpotent, by \thref{2colors}, $\pgc(M)$ is 2-colorable. Recall that $\Delta$ was an arbitrary finite induced subgraph of $\pgc(G)$, so by \cite[Lemma 2.1]{infinitePGCs}, $\pgc(G)$ is 2-colorable.

Consider the backwards direction. Color $\Xi$ with colors $\mathcal{O}$ and $\mathcal{I}$. Label $\Xi$ with $\mathbb{P}$ in such that any two distinct vertices have distinct primes as labels. Call the labeled graph $\Xi_L$. Let $A$ be the set of prime and vertex pairs $\langle p, v\rangle$ such that the corresponding color of the vertex is $\mathcal{O}$, and $B$ be the set of prime and vertex pairs such that the corresponding color of the vertex is $\mathcal{I}$. Enumerate $A$ as $\langle v_0,p_0\rangle,\langle v_1,p_1\rangle,\cdots$ and $B$ as $\langle u_0,q_0\rangle,\langle u_1,q_1\rangle,\cdots$. We define a sequence $\{D_s\}_{s\in\mathbb{N}}$ of finite sets as follows:

Define $D_0  =\varnothing$.

If $s = 2i:$ Define $D_{s+1} = D_s\cup\{\langle v_{i},p_{i}\rangle\}$.

If $s = 2i + 1:$ Define $D_{s+1} = D_s\cup\{\langle u_{i},q_{i}\rangle\}$.

Notice that for any vertex $v\in V(\Xi)$ labeled with $p$ in $\Xi_L$, there exists an $s\in\mathbb{N}$ such that $\langle v,p\rangle\in D_s$. We now define a sequence of groups $\{G_i\}_{i\in\mathbb{N}}$ as follows:

Enumerate $A\cap D_i$ as $\langle v_0,p_0\rangle,\cdots,\langle v_{n_i},p_{n_i}\rangle$ and $B\cap D_i$ as $\langle u_0,q_0\rangle,\cdots,\langle u_{m_i},q_{m_i}\rangle$. For each $q_j, 0\leq j\leq m_i$, define $\pi_{i,j} = \{p_k:p_k-q_j\in\Xi_L, 0\leq k\leq n_i\}$.

By Fermat's little theorem, $p_k\mid q_j^{p_k-1}$ for all $0\leq k\leq n_i$. Notice that this implies $1\equiv q_j^{p_k-i}\mod p_k$ for all $0\leq k\leq n_i$, so $1\equiv q_j^{\prod_{k=1}^{n_i}(p_k-1)}\mod p_t$ for $0\leq t\leq n_i$. Let $\mathbb{F}_{q_j,i}$ be the field of order $q_j^{\prod_{k=1}^{n_i}(p_k-1)}$. Notice that the multiplicative group $\mathbb{F}_{q_j,i}^\times$ embeds into the automorphism group of the additive group $\mathbb{F}_{q_j,i}^+$. Recall that $|\mathbb{F}_{q_j,i}^\times| = q_j^{\prod_{k=1}^{n_i}(p_k-1)}-1$, so $p_t\mid |\mathbb{F}_{q_j,i}^\times|$. Thus, for each $k\in \pi_{i,j}$, there exists a cyclic subgroup $C_{i,p_k}\leq\mathbb{F}_{q_j,i}^\times\leq\operatorname{Aut}(\mathbb{F}_{q_j,i}^+)$ of order $p_k$. Define $Q_{i,q_j} = \mathbb{F}_{q_j,i}^+$.

Define $P_i = C_{p_0}\times\cdots\times C_{p_{n_i}}$ and define $Q_i = Q_{i,q_0}\times\cdots\times Q_{i,q_{m_i}}$. Notice that to define a homomorphism $\varphi_i:P_i\to\operatorname{Aut}(Q_i)$ it suffices to define homomorphisms $\varphi_{i,j}:P_i\to \operatorname{Aut}(Q_{i,q_j})$ for $0\leq j\leq m_i$. To do this, we first define homomorphisms $\gamma_{i,q_j,k}:C_{p_k}\to \operatorname{Aut}(Q_{i,q_j})$ for $0\leq k\leq n_i$. For each $0\leq k\leq n_i$, if $k\in \pi_{i,j}$ let $\gamma_{i,q_j,k}$ be an isomorphism of $C_{p_k}$ and $C_{i,p_k}$ and if $k\not\in \pi_{i,j}$, let $\gamma_{i,q_j,k}$ be the trivial homomorphism. Now, for each $0\leq j\leq m_i$, define $\varphi_{i,q_j} = \gamma_{i,q_j,n_i}\circ\cdots\circ\gamma_{i,q_j,0}$. Define $\varphi_i(x_0,\cdots,x_{m_i}) = (\varphi_{i,q_0}(x_0),\cdots,\varphi_{i,q_{m_i}}(x_{m_i}))$ and notice that because $\varphi_{i,q_j}$ is a homomorphism for $0\leq j\leq m_i$, $\varphi_i$ is a homomorphism. Let $G_i = P_i\ltimes_{\varphi_i}Q_i$.

Let $I\subseteq\mathcal{P}(\mathbb{P})$ be the set of all finite subsets of $\mathbb{P}$. Define $\mathcal{L} = \mathcal{L}_{gp}\cup\{N\}\cup\{H_\pi:\pi\in I\}$ where $N$ and $H_\pi$ are unary relation symbols for $\pi\in I$. We will view each $G_i$ as a $\mathcal{L}$-structure in the following way: Assign the usual interpretations to the symbols of $\mathcal{L}_{gp}$, interpret $N$ such that $N^{G_i} = Q_i$, and for each $\pi\in I$, interpret $H_\pi$ as the following Hall $\pi$-subgroup of $G_i$:
\begin{align*}
    \{((x_0,\cdots,x_{m_i}),(y_0,\cdots,y_{n_i}))\in G_i:x_j = 1\text{ if }q_j\not\in D_i\text{ and }x_k = 1\text{ if }p_k\not\in D_i\}.
\end{align*}
Let $\psi(x,y)$ be the formula $\exists a[N(a)\land x\cdot a = y]$. Define a set of $\mathcal{L}$-sentences $\Phi$ as follows:
\begin{align*}
    \forall x,y,z[(x\cdot y)\cdot z = x\cdot (y\cdot z)]\\
    \forall x[x\cdot 1 = 1\cdot x = x]\\
    \forall x[x\cdot x^{-1} = x^{-1}\cdot x = 1]\\
    \forall x,y[N(x)\land N(y)\to N(x\cdot y)]\\
    \forall x[N(x)\to N(x^{-1})]\\
    \forall x,y[N(x)\land N(y)\to x\cdot y = y\cdot x ]\\
    \forall x,y[N(x)\to N(y^{-1}\cdot x\cdot y)]\\
    \forall x, y[\psi(x\cdot y,y\cdot x)]\\
    \forall x, y[H_\pi(x)\land H_\pi(y)\to H_\pi(x\cdot y)]\text{ for all }\pi\in I\\
    \forall x[H_\pi(x)\to H_\pi(x^{-1})]\text{ for all }\pi\in I\\
    \forall x[H_\pi(x)\to x^{\prod_{r\in\pi}r}=1]\text{ for all }\pi\in I.
\end{align*}
Notice that the first eight axioms are satisfied in $G_i$ for all $i\in\mathbb{N}$ by choice of interpretation of $N$, as are the first two schema involving the $H_\pi$.

\hfill\break
\textbf{Claim 1:} For all $\pi\in I, i\in\mathbb{N}, G_i\vDash\forall x[H_\pi(x)\to x^{\prod_{r\in\pi}r}=1]$.

\hfill\break
\textbf{Proof:} Let $((x_0,\cdots,x_{m_i}),(y_0,\cdots,y_{n_i}))\in H_\pi^{G_i}$. By abuse of notation, we will denote the group operation in $Q_{i,q_j}$ as $+$ for $0\leq j\leq m_i$. Let $0\leq j\leq m_i$, and let $1\leq k\leq \prod_{p\in\pi\cap A}p$. Define $a_{j,k} = \varphi_{i,q_j}(y_0^{k-1},\cdots,y_{n_i}^{k-1})$. Notice that 
\begin{align*}
    ((x_0,\cdots,x_{m_i}),(y_0,\cdots,y_{n_i}))^{\prod_{p\in\pi\cap A}p} = (((\sum_{k=1}^{\prod_{p\in\pi\cap A}p}a_{0,k})x_0,\cdots,(\sum_{k=1}^{\prod_{p\in\pi\cap A}p}a_{m_i,k})x_{m_i}),(1,\cdots,1))
\end{align*}
Then, 
\begin{align*}
    ((x_0,\cdots,x_{m_i}),(y_0,\cdots,y_{n_i}))^{\prod_{p\in\pi}p} = \left(((x_0,\cdots,x_{m_i}),(y_0,\cdots,y_{n_i}))^{\prod_{p\in\pi\cap A}p}\right)^{\prod_{q\in\pi\cap B}q} \\
    = (((\sum_{k=1}^{\prod_{p\in\pi\cap A}p}a_{0,k})x_0,\cdots,(\sum_{k=1}^{\prod_{p\in\pi\cap A}p}a_{m_i,k})x_{m_i}),(1,\cdots,1))^{\prod_{q\in\pi\cap B}q} = ((0,\cdots,0),(1,\cdots,1))
\end{align*}
and the claim is proved.
\begin{flushright}
$\dashv_{\text{Claim}}$
\end{flushright}
By claim 1, $G_i\vDash\Phi$ for all $i\in\mathbb{N}$.

\hfill\break
\textbf{Claim 2:} For all $i\in\mathbb{N}$, $\pgc(G_i) = \Xi_L[D_i]$.

\hfill\break
\textbf{Proof:} Notice that by choice of $G_i,\pi(G_i) = V(\Xi_L[D_i])$. Let $p,q\in\pi(G_i)$ and suppose $p-q\in\Xi_L[D_i]$. Without loss of generality suppose $p\in A, q\in B$. Then, there exist $j,k\in\mathbb{N}$ such that $q = q_j,p = p_k$. Let $(y_0,\cdots,y_{n_i})\in P_i$ have order $p_k$. Then, $(y_1,\cdots,y_{n_i}) = (1,\cdots,y_k,1,\cdots)$ for $y_k$ a generator of $C_{p_k}$. Let $(x_0,\cdots,x_{m_i})\in Q_i$ have order $q_j$. Then $(x_0,\cdots,x_{m_i}) = (1,\cdots, x_j,\cdots,1)$ for some nontrivial $x_j\in Q_{i,q_j}$. Then, $\varphi(1,\cdots,y_k,1,\cdots)(1,\cdots, x_j,\cdots,1) = (1,\cdots,1,\gamma_{i,q_j,k}(y_k)x_j,1,\cdots,1)$ is equal to $(1,\cdots,x_j,\cdots,1)$ if and only if $\gamma_{i,q_j,k}(y_k)x_j = x_j$. Because $x_j\neq 0,$ this would imply $\gamma_{i,q_j,k}(y_k) = 1$, which would be a contradiction to the fact that $y_k$ is nontrivial. Thus, $\varphi(1,\cdots,y_k,\cdots,1)$ does not fix $(1,\cdots,x_j,\cdots,1)$, so by \cite[Corollary 1.5]{Suz}, $p-q\in\pgc(G_i)$. 

Let $p,q\in\pi(G_i)$ and suppose $p-q\not\in\Xi_L[D_i]$. Without loss of generality suppose $p\in A, q\in B$. Then, there exist $j,k\in\mathbb{N}$ such that $q = q_j,p = p_k$. Let $y\in C_{p_k}$ and $x\in Q_{i,q_j}$ be nontrivial. Then, notice that $(1,\cdots,y,\cdots,1)$ has order $p_k$ in $P_i$ and $(1,\cdots,x,\cdots,1)$ has order $q_j$ in $Q_i$. Notice that $\varphi(1,\cdots,y,\cdots,1)(1,\cdots,x,\cdots,1) = (1,\cdots,\gamma_{i,q_j,k}(y)x,\cdots,1)$. Recall that, because $p-q\not\in\Xi_L[D_i]$, $\gamma_{i,q_j,k}(y) = 1$. Thus, $\varphi(1,\cdots,y,\cdots,1)$ fixes $(1,\cdots,x,\cdots,1)$, so $p-q\not\in\pgc(G_i)$ by \cite[Corollary 1.5]{Suz}.
\begin{flushright}
$\dashv_{\text{Claim}}$
\end{flushright}
Enumerate $\mathbb{P}\setminus V(\Xi_L)$ by $p_0,p_1,\cdots$. For each $i\in\mathbb{N}$, let $\varphi_i$ be the formula encoding $\Xi_L[D_i]$ as an induced subgraph of the prime graph complement of any group it is satisfied in, and let $\psi_i$ be $\neg\exists x(x^{p_i} = 1\land (\bigwedge_{1\leq j<p_i}x^j\neq 1))$. Let $\Phi'$ be $\Phi\cup\{\varphi_i:i\in\mathbb{N}\}\cup\{\psi_i:i\in\mathbb{N}\}$. Given any finite subset $\Delta$ of $\Phi'$, let $k$ be the maximum $k$ such that $\varphi_k\in\Phi'$. Then, by claims 1 and 2, $G_k\vDash\Delta$, so by compactness, $\Phi'$ is satisfiable. Let $G\vDash\Phi'$. Then, $G\vDash\Phi$, so $G$ is a locally torsion metabelian (and thus metanilpotent) group, and $G\vDash\varphi_i,G\vDash\psi_i$ for each $i\in\mathbb{N}$, so $\pgc(G) = \Xi_L$.  
\end{proof}
\begin{corollary}
    Let $\Xi$ be a countable graph. $\Xi$ is isomorphic to the prime graph complement of a locally torsion metabelian group if and only if $\Xi$ is 2-colorable.
\end{corollary}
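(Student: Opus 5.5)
The corollary follows from \thref{metabelian}, using that every metabelian group is metanilpotent and that the construction in the backwards direction of \thref{metabelian} already produces a metabelian group. I would treat the two directions separately.

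For the forwards direction, let $G$ be a locally torsion metabelian group. Take a normal subgroup $N$ with $N$ and $G/N$ abelian. Abelian groups are nilpotent, so $G$ is metanilpotent, and it is still locally torsion. By the forwards direction of \thref{metabelian}, $\pgc(G)$ is 2-colorable. The only point to confirm is that \thref{metabelian} needs no hypothesis beyond locally torsion metanilpotent, and it does not.

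For the backwards direction, let $\Xi$ be a countable 2-colorable graph. I would rerun the construction from the proof of \thref{metabelian} and observe that the resulting group is metabelian. Concretely:
\begin{itemize}
\item Each finite approximant $G_i = P_i\ltimes_{\varphi_i}Q_i$ has $P_i$ a product of cyclic groups and $Q_i$ a product of additive groups of finite fields. So $Q_i$ is an abelian normal subgroup and $G_i/Q_i\cong P_i$ is abelian.
\item In the axiom set $\Phi$, the predicate $N$ is axiomatized to be a subgroup, abelian, and normal.
\item The axiom $\forall x,y[\psi(x\cdot y,y\cdot x)]$ says $xy$ and $yx$ lie in the same coset of $N$. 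This forces $G/N$ to be abelian.
\end{itemize}
Hence the compactness model $G\vDash\Phi'$ has an abelian normal subgroup $N^G$ with abelian quotient, so $G$ is metabelian. It is locally torsion and has $\pgc(G)=\Xi_L$, as shown in that proof.

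The main obstacle is confirming that the metabelian structure survives the compactness step, that is, that being metabelian (and locally torsion) is genuinely captured by the first-order axioms in $\Phi$ through the predicate $N$, rather than only holding in the finite approximants. I would check this by writing out the quotient argument for $G/N^G$ explicitly from the axiom $\psi$. Everything else is inherited verbatim from \thref{metabelian}.
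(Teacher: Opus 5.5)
Your route is the paper's own: its proof of this corollary is just a pointer to the proof of \thref{metabelian}. The parts you actually check are fine. The forward direction via ``metabelian $\Rightarrow$ metanilpotent'' is correct. The axiom $\forall x,y\,\psi(xy,yx)$ forces every commutator $y^{-1}x^{-1}yx$ into $N$, so the compactness model is metabelian.

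\textbf{The gap.} It is the sentence ``It is locally torsion \ldots, as shown in that proof.'' Local torsion is not carried by $N$, contrary to what your obstacle paragraph suggests; it would have to come from the predicates $H_\pi$. The only axioms about them in $\Phi$ say that $H_\pi$ is closed under products and inverses and has exponent dividing $\prod_{r\in\pi}r$, and $H_\pi=\{1\}$ satisfies these. So every metabelian group with $\pgc(G)=\Xi_L$ is a model of $\Phi'$, and such a group need not be locally torsion. For instance, let $D_\infty=\langle a,b\mid a^2=b^2=1\rangle$ act on $\langle u\rangle\times\langle v\rangle\cong C_3\times C_5$ with $a$ centralizing $u$ and inverting $v$, and $b$ inverting $u$ and centralizing $v$. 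Then $G=(C_3\times C_5)\rtimes D_\infty$ is metabelian, since the kernel of $G\to C_2\times C_2$ is the abelian group $\langle u,v\rangle\times\langle (ab)^2\rangle$. The elements $au$, $bv$, $uv$ have orders $6$, $10$, $15$, so $\pgc(G)$ is the edgeless graph on $\{2,3,5\}$. Now take a torsion subgroup of $G$. Its image in $D_\infty$ is $\{1\}$ or $\{1,w\}$ for a single reflection $w$, because two distinct reflections generate an infinite group. Hence the subgroup lies in $C_{15}\rtimes\langle w\rangle$. The reflection $w$ acts on $C_{15}$ as $a$ or as $b$ does, so $(nw)^2=n\cdot{}^{w}n$ lies in $\langle u\rangle$, resp.\ $\langle v\rangle$, for every $n\in C_{15}$. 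The subgroup therefore has no element of order $10$, resp.\ $6$, and $G$ is not locally torsion. Compactness may hand you exactly such a model, so ``$G\vDash\Phi'$, hence locally torsion'' is not a valid inference. The same step is unjustified in the proof of \thref{metabelian}, so deferring to it does not close the gap.

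\textbf{The repair.} It is easy. For each finite $\pi\subseteq V(\Xi_L)$, add a sentence $\chi_\pi$ saying that $H_\pi$ contains an element of order $p$ for each $p\in\pi$, and an element of order $pq$ for each pair $p,q\in\pi$ not adjacent in $\Xi_L$. Interpret $H_\pi^{G_i}$ as the genuine Hall subgroup $\prod_{p_k\in\pi}C_{p_k}\ltimes\prod_{q_j\in\pi}Q_{i,q_j}$; the set displayed in the proof of \thref{metabelian} does not depend on $\pi$ as written. Then $G_i\vDash\chi_\pi$ whenever $\pi$ is contained in the labels of $D_i$, because for non-adjacent $p\in A$, $q\in B$ the factor $C_p$ acts trivially on $Q_{i,q}$. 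So any finite $\Delta$ is still satisfied by $G_k$ for $k$ large enough. In a model, $H_\pi^G$ is a subgroup of exponent dividing $\prod_{r\in\pi}r$ that contains the required elements. Because $\pgc(G)=\Xi_L$, it has no element of order $pq$ for adjacent $p,q$. Thus $\pgc(H_\pi^G)=\Xi_L[\pi]$ and $G$ is locally torsion.
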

\begin{proof}
    This follows from the proof of \thref{metabelian}.
\end{proof}
\section{Construction of $T$-Solvable Groups}\label{tsolv}
We first introduce some definitions related to elementary classes:
\begin{definition}
    Let $\mathcal{L}$ be a language. A class $\mathcal{C}$ of $\mathcal{L}$-structures is called \textbf{elementary} if there exists a set  of first-order $\mathcal{L}$-sentences $\Phi$ such that a $\mathcal{L}$-structure $\mathcal{M}\vDash\Phi$ if and only if $\mathcal{M}$ is a member of $\mathcal{C}$.
\end{definition}
\begin{definition}
    Let $\mathcal{L}$ be a language. A class $\mathcal{C}$ of $\mathcal{L}$-structures is called \textbf{basic elementary} if there exists a single $\mathcal{L}$-sentence $\phi$ such that a $\mathcal{L}$-structure $\mathcal{M}\vDash\phi$ if and only if $\mathcal{M}$ is a member of $\mathcal{C}$.
\end{definition}
\begin{definition}
    We define $\mathcal{C}_{\mathcal{T},k}$ to be the class of $\mathcal{T}$-solvable groups $G$ such that there exists a series $1=N_0\unlhd\cdots\unlhd N_k = G$ and each factor is abelian or isomorphic to an element of $\mathcal{T}$.
\end{definition}
\begin{proposition}\thlabel{axiomatizable}
    Let $\mathcal{T}$ be a finite collection of finite nonabelian simple groups. Let $k\in\mathbb{N}^+$. $\mathcal{C}_{\mathcal{T},k}$ is a basic elementary class in any expansion of the language $\mathcal{L} = \{1,^{-1},\cdot, N_i:0< i<k\}$ where $N_i,1\leq i\leq k-1$, is a unary predicate symbol.
\end{proposition}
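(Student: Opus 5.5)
Since the predicates $N_i$ are available, the plan is to write one sentence $\phi$ asserting that $N_1,\dots,N_{k-1}$, together with $N_0=\{1\}$ and $N_k$ the whole universe, form a subnormal series with each factor abelian or isomorphic to some $T\in\mathcal{T}$. Then $\mathcal{M}\vDash\phi$ exactly when $\mathcal{M}$ is a group admitting such a series: given one, interpret $N_i$ as its terms; conversely, the interpretations of the $N_i$ give one. In an expansion of $\mathcal{L}$ the extra symbols are unconstrained, so the same $\phi$ still works. One point needs care. The class consists of groups, not groups together with predicates, so "basic elementary in an expansion" should mean the class of structures whose $\mathcal{L}_{gp}$-reduct lies in $\mathcal{C}_{\mathcal{T},k}$ and whose $N_i$ witness a series. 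I would state this reading explicitly, since $\phi$ cannot quantify over the predicates.

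The sentence $\phi$ is a finite conjunction of the following parts.
\begin{itemize}
    \item The group axioms.
    \item For each $1\le i\le k-1$, that $N_i$ contains $1$ and is closed under $\cdot$ and $^{-1}$. Write $N_0(x)$ for $x=1$ and $N_k(x)$ for $x=x$.
    \item For each $0\le i<k$, that $N_i\subseteq N_{i+1}$ and $N_i$ is normal in $N_{i+1}$: $\forall x\forall y[N_i(x)\land N_{i+1}(y)\to N_i(y^{-1}xy)]$.
    \item For each $i$, the factor condition $\alpha_i\lor\bigvee_{T\in\mathcal{T}}\beta_{i,T}$, where $\alpha_i$ says the factor is abelian: $\forall x\forall y[N_{i+1}(x)\land N_{i+1}(y)\to N_i(x^{-1}y^{-1}xy)]$.
\end{itemize}
Finiteness of $\mathcal{T}$ is exactly what makes this disjunction finite, so that we get a single sentence rather than a theory.

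The main step is writing $\beta_{i,T}$, which says $N_{i+1}/N_i\cong T$. Enumerate $T=\{t_1,\dots,t_n\}$ with multiplication table $t_at_b=t_{m(a,b)}$. Let $\beta_{i,T}$ be
\[
\exists x_1\cdots\exists x_n\Bigl[\bigwedge_a N_{i+1}(x_a)\land\bigwedge_{a\ne b}\neg N_i(x_a^{-1}x_b)\land\bigwedge_{a,b}N_i\bigl(x_{m(a,b)}^{-1}x_ax_b\bigr)\land\forall y\bigl(N_{i+1}(y)\to\bigvee_a N_i(x_a^{-1}y)\bigr)\Bigr].
\]
The conjuncts say, in order, that the $x_a$ lie in $N_{i+1}$, that their cosets are distinct, that the cosets multiply according to the table of $T$, and that these cosets exhaust $N_{i+1}/N_i$. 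Coset membership $xN_i=yN_i$ is expressed as $N_i(x^{-1}y)$, which is first-order because $N_i$ is a predicate.

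Checking that this forces an isomorphism is the step I expect to need the most verification. The map $t_a\mapsto x_aN_i$ is a bijection onto $N_{i+1}/N_i$ by the distinctness and exhaustion clauses, and it is a homomorphism by the table clause. Conversely, if some series with factor $T$ exists, taking preimages of the elements of $T$ as the witnesses $x_a$ satisfies $\beta_{i,T}$. Putting the pieces together, $\mathcal{M}\vDash\phi$ if and only if its group reduct is in $\mathcal{C}_{\mathcal{T},k}$ with the $N_i$ as a witnessing series, which proves the proposition.
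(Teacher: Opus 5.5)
Your proposal is correct and follows the same idea as the paper. The predicates make each factor $N_{i+1}/N_i$ interpretable, ``abelian'' or ``isomorphic to the finite group $T$'' is expressed by one sentence relativized to that quotient, and finitely many cases are combined; your conjunction over $i$ of disjunctions is equivalent to the paper's disjunction over patterns $\sigma$. Your version is more explicit than the paper's in three ways: it writes out the coset-representative sentence, it handles a finite collection $\mathcal{T}$ rather than a single $T$, and it states that $\phi$ axiomatizes groups equipped with a witnessing series, whose group reducts form $\mathcal{C}_{\mathcal{T},k}$.
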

\begin{proof}
    Notice that the quotient groups $N_{i+1}^G/N_i^G$ are interpretable. Because $T$ is finite, there exists a sentence $\varphi$ such that $G\vDash\varphi$ if and only if $G\cong T$. Let $A$ be the set of all strings over $\{0,1\}$ of length $k$, and for each $\sigma\in A$, let $\psi_\sigma$ be the $\mathcal{L}$-sentence that says ``$N_{i+1}/N_i$ is abelian if $\sigma(i) = 0$ and isomorphic to $T$ if $\sigma(i) = 1$".  It follows that $\bigvee_{\sigma\in A}\psi_\sigma$ axiomatizes the class.
\end{proof}
\begin{definition}
    Let $T$ be a nonabelian finite simple group. Let $G$ be a $T$-solvable group. A standard subnormal series of $G$ is a finite subnormal series such that each factor is either abelian or isomorphic to $T$. We define $n(G)$ to be be the minimum length of a standard subnormal series of $G$.
\end{definition}
\begin{definition}
    Let $\{\Lambda_i\}_{i\in\mathbb{N}}$ be a sequence of graphs. We say the sequence is nested if for each $i\in\mathbb{N}, \Lambda_i$ is an induced subgraph of $\Lambda_{i+1}$.
\end{definition}
\begin{lemma}\thlabel{handyUltrafilter}
    Let $N\in\mathbb{N}$. Let $T$ be a nonabelian finite simple group and let $\Xi_L$ be a labeled graph such that the following exist:
    \begin{enumerate}
        \item A sequence $\{\Lambda_i\}_{i\in\mathbb{N}}$ of finite nested induced subgraphs of $\Xi_L$.
        \item A sequence $\{G_i\}_{i\in\mathbb{N}}$ of finite $T$-solvable groups such that $\pgc(G_i) = \Lambda_i$ and $n(G_i)\leq N$ for all $i\in\mathbb{N}$.
        \item For each finite $\pi\subseteq V(\Xi_L)$, a $K_{\pi}\in\mathbb{N}^+$.
        \item For each finite $\pi\subseteq V(\Xi_L)$ and each $i\in\mathbb{N}$, a subgroup $H_{\pi,i}\leq G_i$ with exponent that divides $K_\pi$ such that $\pgc(H_{\pi,i}) = \pgc(G_i)[\pi(H_{\pi,i})]$.
    \end{enumerate}
    Then, there exists a locally torsion $T$-solvable group $G$ such that $\pgc(G) = \Xi_L$.
\end{lemma}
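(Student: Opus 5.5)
The plan is to argue by compactness, in the style of the proof of \thref{metabelian}, using \thref{axiomatizable} to control the $T$-solvable structure. Presumably $\bigcup_i\Lambda_i=\Xi_L$ is intended; otherwise the conclusion $\pgc(G)=\Xi_L$ cannot hold. Since there are only finitely many binary strings of length at most $N$, I will work with series of length exactly $N$, padding a shorter standard series of $G_i$ by repeating terms; repeated terms give trivial, hence abelian, factors. Then every $G_i$ lies in $\mathcal{C}_{\{T\},N}$.

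First, the language. Let $I$ be the set of finite subsets of $V(\Xi_L)$, and set
\[
\mathcal{L}=\mathcal{L}_{gp}\cup\{N_j:0<j<N\}\cup\{H_\pi:\pi\in I\}.
\]
Make each $G_i$ an $\mathcal{L}$-structure by interpreting the $N_j$ as the terms of a padded standard series and $H_\pi$ as $H_{\pi,i}$. Let $\Phi$ consist of the following sentences:
\begin{itemize}
\item the single sentence from \thref{axiomatizable} axiomatizing $\mathcal{C}_{\{T\},N}$;
\item for each $\pi$, sentences saying that $H_\pi$ is closed under products and inverses;
\item for each $\pi$, the sentence $\forall x[H_\pi(x)\to x^{K_\pi}=1]$.
\end{itemize}
By hypotheses (2) and (4), every $G_i\vDash\Phi$.

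Next I add sentences pinning down the prime graph complement. For each $i$, let $\sigma_i$ encode $\Lambda_i$ as an induced subgraph: existence of elements of order $p$ for $p\in V(\Lambda_i)$, existence of elements of order $pq$ exactly for the non-edges, and nonexistence otherwise. For each $p\in\mathbb{P}\setminus V(\Xi_L)$, let $\tau_p$ assert that there is no element of order $p$. To make $H_\pi$ witness the local torsion condition, I also add, for each finite $\pi$ and each finite subgraph $\Lambda_i$ with $\pi\subseteq V(\Lambda_i)$, sentences saying that $H_\pi$ contains elements of order $p$ for each $p\in\pi$, and elements of order $pq$ exactly for the pairs $p,q\in\pi$ that are non-adjacent in $\Xi_L$.

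This last step is the main obstacle. Hypothesis (4) only gives $\pgc(H_{\pi,i})=\pgc(G_i)[\pi(H_{\pi,i})]$; it does not say $\pi\subseteq\pi(H_{\pi,i})$. I will therefore read (4) as intending that $\pi\subseteq\pi(H_{\pi,i})$ whenever $\pi\subseteq V(\Lambda_i)$, as in the construction in \thref{metabelian}. Alternatively, one can replace $H_{\pi,i}$ by a Hall $\pi$-type subgroup generated by suitable elements of orders $p$ and $pq$ inside it, in the spirit of \cite[Lemma 2.4]{infinitePGCs}. Under this reading, $G_i$ satisfies the $H_\pi$-sentences for all $\pi\subseteq V(\Lambda_i)$.

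Compactness then finishes the argument. A finite subset $\Delta$ of the full theory $\Phi'$ mentions only finitely many $\sigma_i$, $\tau_p$ and $H_\pi$. Choose $k$ large enough that $\Lambda_k$ contains every $\Lambda_i$ and every $\pi$ mentioned. Every $\tau_p$ in $\Delta$ concerns a prime $p\notin V(\Xi_L)$, hence $p\notin V(\Lambda_k)=\pi(G_k)$, so $G_k$ satisfies it. Since the $\Lambda_i$ are nested induced subgraphs, $\pgc(G_k)=\Lambda_k$ restricts correctly to each earlier $\Lambda_i$, so $G_k\vDash\Delta$.

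Let $G\vDash\Phi'$. The interpretations $N_j^G$ form a series whose factors are abelian or isomorphic to $T$, so $G$ is $T$-solvable. The sentences $\sigma_i$ together with the $\tau_p$ give $\pgc(G)=\bigcup_i\Lambda_i=\Xi_L$. Finally, for each finite $\pi$, $H_\pi^G$ is a subgroup of exponent dividing $K_\pi$, hence torsion, with $\pgc(H_\pi^G)=\pgc(G)[\pi]$. This is exactly the local torsion condition.
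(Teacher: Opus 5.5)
Your proposal is the paper's argument: the same expanded language with predicates for the series and for each $H_\pi$, the single sentence axiomatizing $\mathcal{C}_{T,N}$, sentences encoding each $\Lambda_i$ and excluding primes outside $V(\Xi_L)$, and compactness witnessed by one $G_k$ with $k$ large. Your explicit assumptions ($\bigcup_i\Lambda_i=\Xi_L$, padding the series to length $N$, and $\pi\subseteq\pi(H_{\pi,i})$ once $\pi\subseteq V(\Lambda_i)$) are exactly what the paper uses silently. One small tightening: your $H_\pi$-sentences only give $\pgc(H_\pi^G)[\pi]=\pgc(G)[\pi]$, not the literal equality $\pgc(H_\pi^G)=\pgc(G)[\pi]$ that the definition of locally torsion asks for. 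To get equality, read (4) as $\pi(H_{\pi,i})=\pi$, and add for each of the finitely many primes $r\mid K_\pi$ with $r\notin\pi$ that $H_\pi$ has no element of order $r$; this is what the paper's $\varphi_\pi$ (``encodes $\Xi_L[\pi]$ as the prime graph complement of $H_\pi$'') amounts to. You should also drop the ``Hall $\pi$-type'' fallback: elements generated inside $H_{\pi,i}$ cannot supply missing primes, and generating inside $G_i$ loses the uniform exponent bound $K_\pi$.
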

\begin{proof}
    Let $\mathcal{L}$ be an expansion of $\mathcal{L}_{gp}$ that contains unary predicates $N_i$ for $0\leq i\leq N$ and $H_\pi$ for every finite $\pi\subseteq V(\Xi_L)$. Let $\sigma$ be the sentence that axiomatizes the $\mathcal{C}_{T,N}$, guaranteed by \thref{axiomatizable}. For each finite $\pi\subseteq V(\Xi_L)$, let $\varphi_\pi$ be the sentence that says $H_\pi$ is a subgroup, for each $p\in \pi$ there exists an element of order $p$ in $H_\pi$, for all $x\in H_\pi$ $x^{K_\pi} = 1$, and encodes $\Xi_L[\pi]$ as the prime graph complement of $H_\pi$. For each $i\in\mathbb{N}$, let $\varphi_{\Lambda_i}$ encode $\Lambda_i$. 

    Enumerate the primes of $\mathbb{P}\setminus V(\Xi_L)$ by $p_0,p_1,\cdots$. For each $i\in\mathbb{N}$, let $\psi_i$ be the sentence $\neg\exists x(x^{p_i}=1\land(\land_{1\leq j<p_i}x^j\neq 1))$. Define $\Phi$ to be the set consisting of $\sigma, \varphi_\pi$ for each finite $\pi\subseteq V(\Xi_L),\varphi_{\Lambda_i}$ for each $i\in\mathbb{N}$, and $\psi_i$ for each $i\in\mathbb{N}$. By conditions (1)-(4), $\Phi$ is finitely satisfiable, so by compactness, there exists a $G\vDash\Phi$. Notice immediately that $G$ is a $T$-solvable group with $n(G)\leq N$ by the fact that $G\vDash\sigma$. By the inclusion of the $\psi_i,\varphi_{\Lambda_i}$ for $i\in\mathbb{N}$, $\pgc(G) = \Xi_L$, and $G$ is a locally torsion group by the inclusion of the axioms $\varphi_\pi$ in $\Phi$ for each finite $\pi\subseteq V(\Xi_L)$.
\end{proof}
We now generalize \cite[Lemma 1.3]{psl}:
\begin{lemma}\thlabel{13pslGeneral}
    Let $T$ be a finite nonabelian simple group, $F$ a strictly $T$-solvable group, $\Xi$ a graph such that each vertex has finite degree, and $X$ a set of $|\pi(F)|$ vertices of $\Xi$. Suppose $\Xi\setminus X$ is triangle-free and has a 3-coloring $\{\mathcal{O},\mathcal{D},\mathcal{I}\}$ such that all vertices in $N(X)$ are all colored $\mathcal{I}$. Further suppose that there exists some $E = F\ltimes L$ for some solvable $L$ such that $\pi(L)\subseteq\pi(T)$ and:
    \begin{itemize}
        \item There exists a graph isomorphism $\varphi$ from $\Xi[X]$ to $\pgc(E)$.
        \item Given $v\in \Xi\setminus X$ there exists a complex irreducible representation $V$ of $F$ such that for all $x\in X, x-v\in\Xi$ if and only if elements of order $\varphi(x)$ of $F$ act without fixed points in $V$.
        \item $N[\pi(L)]\subseteq\pi(T)$ in all prime graph complements of finite strictly $T$-solvable groups.
    \end{itemize}
    Then there exists a locally torsion ultragroup $G$ realizing $\Xi$.
\end{lemma}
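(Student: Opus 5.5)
The plan is to reduce to the finite case and then pass to an ultraproduct, using \thref{ultraclas} (or the compactness argument of \thref{handyUltrafilter}). Fix a labeling $\Xi_L$ of $\Xi$ by primes in which the vertices of $X$ get the labels $\pi(E)$, with $\varphi$ realized by the labeling. Enumerate $V(\Xi)\setminus X$ as $v_0,v_1,\dots$. Let $\Lambda_i=\Xi_L[X\cup\{v_0,\dots,v_i\}]$. These are finite nested induced subgraphs whose union is $\Xi_L$. Because every vertex has finite degree, $N(X)$ is finite. So for all large $i$, $\Lambda_i$ contains all of $N(X)$, and the vertices $v_j$ outside $N[X]$ are adjacent in $\Xi$ to no vertex of $X$.

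For each $i$ I will build a finite strictly $T$-solvable group $G_i$ with $\pgc(G_i)=\Lambda_i$ and $n(G_i)$ bounded independently of $i$, following the construction in \cite[Lemma 1.3]{psl}. Concretely:
\begin{enumerate}
\item Split $\Lambda_i\setminus X$ by the 3-coloring into $\mathcal{O}$, $\mathcal{D}$, $\mathcal{I}$. Use the finite solvable construction of \cite[Theorem 2.8]{2015REU} to obtain a Frobenius-type solvable group $S_i$ realizing $\Lambda_i\setminus X$. The $\mathcal{I}$-layer should be the bottom elementary abelian layer, built as $\bigoplus_{v\in\mathcal{I}}W_v$ with each $W_v$ an $\mathbb{F}_{p_v}$-module.
\item For each $v\in\mathcal{I}\cap N(X)$, take the complex irreducible representation $V$ of $F$ given by the second hypothesis. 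Reduce it modulo $p_v$; this is possible since $p_v\notin\pi(E)$ and the order is coprime. Let $E$ act on the $p_v$-layer through $F\to E/L$. Then elements of order $\varphi(x)$ act fixed-point-freely exactly when $x-v\in\Xi$.
\item On every other layer, let $E$ act trivially. Form $G_i=E\ltimes(\text{solvable part})$, arranged so that $n(G_i)\le n(E)+3$.
\end{enumerate}
The third hypothesis, $N[\pi(L)]\subseteq\pi(T)$, guarantees that the primes of $L$ create no unwanted non-edges with vertices outside $X$. Together with the triangle-free 3-coloring, this should yield $\pgc(G_i)=\Lambda_i$ via \cite[Corollary 1.5]{Suz}, exactly as in the finite lemma.

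Next I will verify conditions (3) and (4) of \thref{handyUltrafilter}. Given a finite $\pi$, let $K_\pi$ be $|E|$ times $\prod_{p\in\pi}p^{e_p}$, where $e_p$ bounds the exponent of the $p$-part. Take $H_{\pi,i}$ to be $E$ times the Hall subgroup of the solvable layers on the primes of $\pi$. The exponent of $H_{\pi,i}$ stays bounded because each vertex contributes a fixed elementary abelian or cyclic layer whose exponent does not grow with $i$. This is the point to watch: field extensions like those in \thref{metabelian} are allowed, since they do not raise the exponent. Then \thref{handyUltrafilter}, whose compactness argument amounts to taking a nonprincipal ultraproduct of the $G_i$, yields a locally torsion $T$-solvable group realizing $\Xi$. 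Here we also use that $V_p=\varnothing$, since every $\pi(G_i)\subseteq V(\Xi_L)$. Obtaining it literally as an ultragroup follows from \thref{ultraclas} with $\varphi$ the sentence of \thref{axiomatizable}, together with the $H_\pi$ predicates.

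The main obstacle is step 2 together with the verification that $\pgc(G_i)=\Lambda_i$ exactly. The difficulty is ensuring that mixing the $E$-action with the solvable Frobenius actions introduces no extra non-edges or edges. In particular, $\mathcal{I}$-vertices must keep their prescribed adjacency to $\mathcal{O}$ and $\mathcal{D}$ vertices while also receiving a nontrivial $F$-action. I would handle this by taking each $W_v$ to be a tensor product of the $F$-module and the solvable-part module over a common splitting field. I would then check fixed points elementwise, using coprimality.
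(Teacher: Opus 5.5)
Your overall architecture matches the paper's: finite strictly $T$-solvable approximations $G_i$ with $\pgc(G_i)=\Lambda_i$ and $n(G_i)$ uniformly bounded, Hall-type subgroups $H_{\pi,i}$ of exponent bounded in terms of $\pi$, then \thref{handyUltrafilter}. There is, however, a genuine gap at the first step.

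You fix an \emph{arbitrary} prime labeling of $V(\Xi)\setminus X$ and then invoke \cite[Theorem 2.8]{2015REU} to get a solvable $S_i$ with $\pgc(S_i)=\Lambda_i\setminus X$ as \emph{labeled} graphs. That theorem only realizes graphs up to isomorphism, and its proof chooses the primes. Once the graph has a genuinely three-layer part, the primes cannot be arbitrary.

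In your layered group, suppose a $\mathcal{D}$-vertex $d$ has an $\mathcal{O}$-neighbour $o$ and an $\mathcal{I}$-neighbour $i$. Then every element of order $d$ must act fixed-point-freely on the $i$-layer, so the $d$-part of the middle layer is cyclic (for odd $d$). An element of order $o$ acting fixed-point-freely on a cyclic $d$-group then forces $o\mid d-1$. Field extensions as in \thref{metabelian} do not rescue this. They are harmless in the bottom layer, but replacing $C_d$ by $\mathbb{F}_{d^k}^+$ with $k\ge 2$ in the middle layer destroys fixed-point-freeness on the $\mathcal{I}$-layer.

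For instance, a $5$-cycle labeled by $7,11,13,17,19$ is not the prime graph complement of any solvable group. No one of these primes divides another minus one. Yet every complement edge of a solvable group comes from a Frobenius or $2$-Frobenius Hall $\{p,q\}$-subgroup, and along an odd cycle this forces $p\mid q-1$ for some adjacent pair. Moreover, the labeling must work simultaneously for all $i$, because the $\Lambda_i$ are nested in one fixed $\Xi_L$. You cannot relabel stage by stage as the finite construction does.

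The missing idea is to choose the labels of $\Xi\setminus X$ arithmetically and uniformly. For example, label vertices one at a time with fresh primes outside $\pi(E)$. Before labeling a $\mathcal{D}$-vertex $d$, label its $\mathcal{O}$-neighbours, then use Dirichlet's theorem to pick $d\equiv 1\pmod{\prod_o o}$ over those neighbours. This is where the hypothesis that every vertex has finite degree is really used: each vertex imposes only finitely many congruence conditions. You used that hypothesis only to note that $N(X)$ is finite.

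In the paper this step is supplied by \cite[Corollary 5.3]{infinitePGCs}, applied after enlarging $\Xi[X]$ to $\pgc(F)$. It yields a labeling together with a nested sequence $\Lambda_i$, each realized by a finite strictly $T$-solvable group. The paper then takes $K_i$ from the proof of \cite[Lemma 1.3]{psl}, with $\pgc(K_i)=\Xi_L[V(\Lambda_i)]$ and $n(K_i)\le n(F)+n(L)+3$. It uses Hall subgroups from \cite[Theorem 4.4]{Suz}, whose orders are bounded in terms of $\pi$ alone.

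With such a labeling fixed, the rest of your plan goes through along the same lines as the paper. That includes tensoring the reduced $F$-module with the solvable module on each $\mathcal{I}$-layer, bounding exponents, and applying \thref{handyUltrafilter} (or taking an ultraproduct).
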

\begin{proof}
Let $\Lambda$ be the graph obtained from $\Xi$ by (possibly) adding edges among vertices in $X$ such that $\Lambda[X]\cong\pgc(F)$, in particular, such that $E(\Xi)\subseteq E(\Lambda)$. Without loss of generality, by \cite[Corollary 5.3]{infinitePGCs}, there exists a labeling $\Lambda_L$ of $\Lambda$ and a sequence of induced subgraphs $\Lambda_L[X] = \Lambda_0\subseteq\Lambda_1\subseteq\cdots$ such that the following criteria are met:
\begin{enumerate}
    \item $\bigcup_{i\in\mathbb{N}}\Lambda_i = \Lambda_L$.
    \item For all $i\in\mathbb{N}$, there exists a finite strictly $T$-solvable group $H_i$ such that $\pgc(H_i)=\Lambda_i$.
\end{enumerate}
Notice also that $\Lambda_i[X] = \pgc(F)$ for all $i\in\mathbb{N}$.

Let $\Xi_L$ be the labeling of $\Xi$ obtained by considering $\Xi$ as a subgraph of $\Lambda$ and applying the vertex labeling of $\Lambda_L$ to the vertices of $\Xi$. Fix $i\in\mathbb{N}$. By the proof of \cite[Lemma 1.3]{psl}, there exists a finite strictly $T$-solvable group $K_i$ such that $\pgc(K_i) = \Xi_L[V(\Lambda_i)]$ and $n(K_i)\leq n(F) + n(L) + 3$ (to see this last bound, see the proofs of  \cite[Lemma 1.3]{psl} and \cite[Lemma 0.13]{psl}). For each $i\in\mathbb{N}$, let $\Xi_i = \Xi_L[V(\Lambda_i)]$. 

For each $i\in\mathbb{N}$, consider the directed graph corresponding to $\pgc(K_i)$ given in \cite[Lemma 0.13]{psl}.

Let $\pi\subseteq V(\Xi_L)$ be finite and contain $\pi(T)$. Now choose the least $i\in\mathbb{N}$ such that $\pi\subseteq\pi(K_i)$. By \cite[Theorem 4.4]{Suz}, there exists a Hall $\pi$-subgroup $H_{i,\pi}\leq K_i$ such that $\pgc(H_{i,\pi}) = \pgc(K_i)[\pi]$. By the constructions given in \cite[Lemma 5.1]{infinitePGCs}, \cite[Lemma 1.3]{psl}, and \cite[Lemma 0.13]{psl}; for all $j\geq i$, $|H_{j,\pi}|\leq |F||L|r_1^{k_{1,\pi}}\cdots r_{n_\pi}^{k_{n_\pi,\pi}}\prod_{p\in\pi\setminus\{r_1,\cdots,r_{n_{\pi}}\}}p$ for some $r_1,\cdots, r_{n_\pi}\in\pi$ and $k_{1,\pi},\cdots,k_{n_\pi,\pi}\in\mathbb{N}^+$ which depend only on $\pi$. Then, there exists $N_\pi\in\mathbb{N},N_\pi\geq |F||L|r_1^{k_{1,\pi}}\cdots r_{n_\pi}^{k_{n_\pi,\pi}}\prod_{p\in\pi\setminus\{r_1,\cdots,r_{n_{\pi}}\}}p$ such that for each $j\geq i,$ the exponent of $H_{j,\pi}$ divides $N_\pi$.

By choice of the sequence $\{\Lambda_i\}_{i\in\mathbb{N}}$, for all $i\in\mathbb{N}$, $\Xi_i$ is an induced subgraph of $\Xi_{i+1}$ and $ \Xi_L[X]=\Xi_i[X]$. Additionally, $\bigcup_{i\in\mathbb{N}}\Xi_i = \Xi_L$. Thus, by \thref{handyUltrafilter}, there exists a $T$-solvable locally torsion ultragroup $G$ such that $\pgc(G) = \Xi_L$.
    
\end{proof}

\section{$T$-Solvable Locally Torsion Classifications}\label{tsolvconstruction}
We consider the groups classified in \cite[Theorem 2.7]{psl}. To do this, we first specify these groups. Let $T$ be a finite nonabelian simple group such that $\pi(T) = \pi(\operatorname{Aut}(T))$ with the following prime graph complement (as in \cite[Section 2]{psl}):
\begin{figure}[H]
    \centering
    \begin{minipage}{\textwidth}
        \centering
        \begin{tikzpicture}
            \coordinate (A) at (0, 1);   
            \coordinate (B) at (1, 1);   
            \coordinate (C) at (1, 0);   
            \coordinate (D) at (0, 0);   
            \draw (B) -- (C);
            \draw (C) -- (D);
            \draw (A) -- (D);
            \draw (A) -- (B);
            \draw (A) -- (C);
            \foreach \point in {A, B, C, D}
                \draw[draw, fill=white] (\point) circle [radius=0.2cm];
            \foreach \point/\name in {(A)/2, (B)/c, (C)/d, (D)/a}
                \node at \point {\scriptsize\name};
        \end{tikzpicture}
        \caption*{$\pgc(T)$}
    \end{minipage}%
\end{figure}
Also suppose that $T$ satisfies the following criteria as in \cite[Section 2]{psl}:
\begin{enumerate}
    \item The Schur multiplier of $T$ is 1.
    \item The fixed point information for $T$ matches \thref{psl16fpinfo} under some bijection between $\pi(T)$ and $\{2, a, c, d\}$ such that 2 is mapped to itself.
    \item The rooted graph \szunrecognizei (where the white vertex indicates the root) is realizable by $P\rtimes T$ for some $p$-group $P$ with $p\in\{2, a, c\}$.
    \item The graph \SuzukiThirtyTwoElusiveGraphWhiteBadb is not realizable by a $T$-solvable group.
    \item The Sylow 2-subgroups of $T$ do not satisfy the Frobenius criterion, \cite[Definition 2.2.1]{2023REU}.
\end{enumerate}
\begin{fact}\thlabel{psl16fpinfo}
    \begin{tabular}[h]{|c|c|}
    \hline
        Fixed Point Information &  \\
        \hline
        $T$ & $[[2, a, c], [2,a,c,d]]$ \\
        \hline
    \end{tabular}\\
\end{fact}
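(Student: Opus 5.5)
The table is less a theorem than a record of data: it fixes the \emph{fixed point information} that criterion (2) requires of $T$. The model case is $T=\operatorname{PSL}(2,16)$, with $\pi(T)=\{2,3,5,17\}$ labelled so that $2\mapsto 2$ and $\{3,5,17\}\mapsto\{a,c,d\}$. So I would not argue abstractly. I would verify the table for this representative group by direct computation with ordinary and Brauer characters, then note that for a general $T$ the table is exactly the hypothesis being imposed.

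I am taking the notion as in \cite{psl}. For each irreducible $\mathbb{F}_rT$-module $V$, with $r$ ranging over the primes of $\pi(T)$ and complex modules included, one records the set of primes $p\in\pi(T)$ such that every element of order $p$ fixes a nonzero vector of $V$. The table then lists the minimal such sets that occur, here $\{2,a,c\}$ and $\{2,a,c,d\}$. For each module and each element $g$ of prime order $p\neq r$, the dimension of the fixed space is
\begin{equation}
\dim C_V(g)=\frac{1}{p}\sum_{k=0}^{p-1}\beta_V(g^k),
\end{equation}
where $\beta_V$ is the Brauer character of $V$, or the ordinary character in characteristic $0$. When $p=r$, every element of order $p$ fixes a nonzero vector, since a $p$-group acting on a nonzero module in characteristic $p$ always has nonzero fixed points. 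So $r$ always lies in the recorded set for modules in characteristic $r$.

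I would run these checks over the character tables of $\operatorname{PSL}(2,16)$ from the ATLAS and the modular atlas, one characteristic at a time.
\begin{itemize}
\item Every nontrivial irreducible module has fixed points for elements of order $2$, $3$ and $5$. The dimensions, $16$, $17$ and the two-dimensional families, are large compared with the character values, which are small on these classes.
\item Elements of order $17$ act fixed-point-freely on some irreducible module.
\item Elements of order $17$ have fixed points on others, for instance the Steinberg-type modules, which gives $\{2,a,c,d\}$.
\end{itemize}
The same formula shows that no smaller set occurs: elements of orders $2$, $3$ and $5$ are never fixed-point-free on any nontrivial irreducible module. In particular, for such an element $g$, a module $V$ can be fixed-point-free only if $p\mid\dim V$ and $\beta_V$ takes suitable values on the powers of $g$.

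The main obstacle is the modular part, especially characteristic $2$, the defining characteristic, where the irreducibles are twisted tensor products of the natural $2$-dimensional module. Here I would use the Steinberg tensor product description directly rather than Brauer tables: compute the eigenvalues of a generator of the torus of order $15$ and of $17$ on $V_1\otimes V_1^{\sigma}\otimes\cdots$, and read off whether $1$ occurs as an eigenvalue. Characteristics $3$, $5$ and $17$ follow from the decomposition matrices, because the Sylow subgroups are cyclic there.
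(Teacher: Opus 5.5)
\textbf{Gap: wrong notion of fixed point information.} You are right that the table is data rather than a theorem. The paper gives no proof; it imposes the table as criterion (2) on $T$, and for $\operatorname{PSL}(2,16)$ the data is quoted from the cited PSL paper. But your verification uses a different notion of fixed point information from the paper's, and under your notion the table is false.

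The paragraph after the table says the paper records, for each \emph{complex} irreducible character of $T$, the primes whose elements have fixed points. The generalized Lemma 1.3 likewise asks for a complex irreducible representation. The table lists \emph{all} sets that arise, not the minimal ones. Your ``minimal sets'' reading already clashes with the table, since $\{2,a,c\}\subsetneq\{2,a,c,d\}$.

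Including modular modules in characteristics $r\in\pi(T)$ is worse, because it breaks the statement. Take the natural module of $\operatorname{SL}(2,16)=\operatorname{PSL}(2,16)$ in characteristic $2$. Elements of order $3$, $5$ and $17$ have eigenvalues $\lambda^{\pm1}$ with $\lambda\neq 1$, lying in $\mathbb{F}_{16}^\times$ or $\mathbb{F}_{256}^\times$. So they act fixed-point-freely, and the recorded set is $\{2\}$, which is not in the table. The characteristic-$2$ eigenvalue computation you call the main obstacle is exactly where your version fails. For the same reason, your claim that every nontrivial irreducible module has fixed points for elements of orders $2,3,5$ is false once modular modules are allowed. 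The modular part, including characteristics $3,5,17$, should simply be dropped.

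\textbf{Wrong identification of the Steinberg module.} The Steinberg character has value $-1$ on elements $g$ of order $17$, so
\begin{equation*}
\dim C_V(g)=\tfrac{1}{17}\bigl(16+16\cdot(-1)\bigr)=0 .
\end{equation*}
The Steinberg module is therefore the source of $[2,a,c]$, not of $[2,a,c,d]$.

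\textbf{The correct check.} It is short, using your averaging formula on the $17$ complex irreducible characters of $\operatorname{PSL}(2,16)$.
\begin{itemize}
\item Degree $1$ (trivial): every element fixes everything.
\item Degree $16$ (Steinberg): values $0$ on involutions and $1$ on elements of orders $3,5$. This gives fixed-space dimensions $8,6,4$ for orders $2,3,5$, and $0$ for order $17$.
\item Degree $17$ (seven characters): value $0$ on elements of order $17$, so $\dim C_V(g)=1$ there. Dimensions are $9$ for order $2$, $5$ or $7$ for order $3$, and $3$ or $5$ for order $5$.
\item Degree $15$ (eight characters): values $-(\beta^{jk}+\beta^{-jk})$ on the torus of order $17$, summing to $2$ over the nontrivial powers, so $\dim C_V(g)=(15+2)/17=1$. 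Dimensions are $7,5,3$ for orders $2,3,5$.
\end{itemize}
This gives exactly $[[2,3,5],[2,3,5,17]]$ with $d=17$ and $\{a,c\}=\{3,5\}$, which is the table.
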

A list $[p_1,\cdots,p_n]$ corresponds to some complex irreducible character of $T$ and contains exactly the primes $p_1,\cdots,p_n$ such that for each $i\in[n]$, there exists some element of $T$ such that an element of order $a_i$ has fixed points in the representation. The list $[[p_{1,1},\cdots,p_{1,n_1}],\cdots,[p_{m,1},\cdots,p_{m,n_m}]]$ lists every such list corresponding to a complex irreducible character of $T$. The notation is similar to that in \cite{Suz} and \cite{psl}.
\begin{definition}
    A \textbf{rooted graph} is a graph $(v,V,E)$ for some $v\in V$. The vertex $v$ is called the \textbf{distinguished vertex}.
\end{definition}
\begin{theorem}\thlabel{psl216theorem}
    Let $\Xi$ be a countable graph such that each vertex has finite degree. $\Xi$ is isomorphic to the prime graph complement of some locally torsion $T$-solvable group with the finite Frobenius property if and only if $\Xi$ is 3-colorable and one of the following is true:
    \begin{enumerate}
        \item $\Xi$ is triangle-free.
        \item There exists a subset $X = \{w, x, y, z\}\subseteq V(\Xi)$ such that for all $p-q\in\Xi$ with $p\in X, q\in\Xi\setminus X$, we have that $p = z$. All triangles of $\Xi$ are contained in $X$. Additionally, in some 3-coloring of $\Xi$ the closed neighborhood $N(X)$ shares one color, and $\Xi[X] =\hamSandwichWhite$, \szunrecognizei, or \notIsolatedVertexWhite where $z$ is the white vertex.
    \end{enumerate}
\end{theorem}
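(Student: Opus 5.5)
We prove the two directions separately. The forward direction goes through finite subgroups and the finite classification of \cite[Theorem 2.7]{psl}. The backward direction combines the construction lemmas of Section \ref{tsolv} with the existing torsion results of \cite{infinitePGCs}.

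\textbf{Forward direction.} Let $G$ be a locally torsion $T$-solvable group with the finite Frobenius property, and fix a standard subnormal series of length $n(G)$.
\begin{enumerate}
\item Take a finite $\pi\subseteq\pi(G)$ with $\pi\supseteq\pi(T)$. Choose a torsion $H\le G$ with $\pgc(H)=\pgc(G)[\pi]$.
\item Pick elements $g_p$ of order $p$ and $g_{pq}$ of order $pq$ for each edge of $\Gamma(G)[\pi]$. Let $K$ be the subgroup they generate.
\item Argue, as in the proof of \thref{metabelian} but using the $T$-solvable analogue \cite[Lemma 2.4]{infinitePGCs} for torsion $T$-solvable groups, that $K$ is a finite $T$-solvable group.
\item Take a Hall-type $\pi$-subgroup of $K$, as in \cite[Theorem 4.4]{Suz}. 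This yields a finite $T$-solvable group whose prime graph complement is exactly $\pgc(G)[\pi]$.
\item By \cite[Theorem 2.7]{psl}, every such finite induced subgraph is 3-colorable and satisfies condition 1 or condition 2 (with $X$ inside that subgraph).
\end{enumerate}

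To pass from finite induced subgraphs to $\Xi$, first apply \cite[Lemma 2.1]{infinitePGCs} to get 3-colorability. If $\Xi$ is not triangle-free, any triangle must lie inside the set $\pi(T)$, so it forces the structure of condition 2.

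The set $X$ is determined uniquely by $\pi(T)$, equivalently by the labels of the triangles. The local conditions then glue: $\Xi[X]$ is one of the three pictured graphs, and only $z$ has neighbours outside $X$. For the requirement that $N(X)$ be monochromatic in a single global 3-coloring, we use a compactness argument. Since each vertex has finite degree and $\Xi$ is countable, König's lemma over the finite induced subgraphs containing $X$ yields a coherent coloring.

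\textbf{Backward direction.} If $\Xi$ is triangle-free and 3-colorable, \cite[Theorem 3.11]{infinitePGCs} already gives a solvable torsion group with the finite Frobenius property. That group is trivially $T$-solvable.

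In case 2, we apply \thref{13pslGeneral}. Depending on which of the three graphs $\Xi[X]$ is, we take:
\begin{itemize}
\item $F=T$ with $L=1$ for \hamSandwichWhite;
\item $E=P\rtimes T$ from criterion 3 for \szunrecognizei;
\item an appropriate extension, as constructed in \cite[Section 2]{psl}, for \notIsolatedVertexWhite.
\end{itemize}
The representation hypothesis of \thref{13pslGeneral} follows from the fixed point information in \thref{psl16fpinfo}. A representation on which the elements of order $d$ have fixed points, but all other elements of $F$ act fixed-point-freely, supplies the edges at $z$. The hypothesis $N[\pi(L)]\subseteq\pi(T)$ is exactly what is checked in \cite{psl} for these groups.

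\thref{13pslGeneral} then produces a locally torsion ultragroup realizing $\Xi$. The finite Frobenius property follows from the finite-degree hypothesis.

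\textbf{Main obstacle.} The hardest step is the forward direction's passage from local to global. We must show that the finite subgroup $K$ is genuinely $T$-solvable, which needs the infinite analogue of \cite[Lemma 2.4]{infinitePGCs}. We must also extract a single set $X$ and a single 3-coloring with $N(X)$ monochromatic from the finite approximations.

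For the coloring, the plan is:
\begin{enumerate}
\item Fix $X$ first.
\item For each $n$, consider the finitely many colorings of the ball of radius $n$ around $X$ that make $N(X)$ monochromatic. This set is finite because every degree is finite.
\item Apply König's lemma to obtain a coherent coloring of the component containing $X$.
\item Color the remaining components independently using the triangle-free local condition.
\end{enumerate}
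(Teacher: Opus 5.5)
Your overall architecture agrees with the paper's, but there are genuine gaps.

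\textbf{Forward direction.} Step~3 fails as stated. Unlike solvability, $T$-solvability is not inherited by subgroups, because a subgroup of $T$ can have a nonabelian composition factor other than $T$. For example, $\operatorname{PSL}(2,4)\cong A_5\le\operatorname{PSL}(2,16)$. Hence $K$ and its Hall subgroup need not be $T$-solvable, and neither \cite[Theorem 4.4]{Suz} nor \cite[Theorem 2.7]{psl} applies. A finiteness statement like \cite[Lemma 2.4]{infinitePGCs} does not address this. What is actually needed is twofold. First, $G$ must be strictly $T$-solvable once $\Xi$ has a triangle; your Step~1 silently assumes $\pi(T)\subseteq\pi(G)$. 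Second, you need finite \emph{strictly} $T$-solvable subgroups of $G$ realizing $\pgc(G)[\pi]$ for finite $\pi\supseteq\pi(T)$, which is \cite[Lemma 2.6]{infinitePGCs}.

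Even granting that, the gluing does not follow from the \emph{statement} of \cite[Theorem 2.7]{psl}. That statement only produces some four-set $X_\pi$. It says nothing about $\pi(T)$ when the finite piece is triangle-free, and it does not force $X_\pi=\pi(T)$ or $z=d$. Your remark that $X$ is determined by the triangles is false for the pendant-triangle shape and the triangle-plus-isolated-vertex shape, where one vertex of $X$ lies in no triangle. So a priori $X_\pi$ could vary with $\pi$.

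The paper avoids this by using the labeled structure result \cite[Theorem 2.4]{psl} for finite strictly $T$-solvable groups: only $d$ has neighbours outside $\pi(T)$, all triangles lie in $\pi(T)$, and $N(\pi(T))$ can be given one color. This is transferred to $G$ by \cite[Lemmas 4.1--4.3]{infinitePGCs}. The paper then cuts $\pgc(G)[\pi(T)]$ down to the three pictured graphs using $\pgc(G)[\pi(T)]\subseteq\pgc(T)$ and criterion~4. Your K\"onig's lemma argument is a valid substitute for the coloring step once that labeled local structure is in hand.

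\textbf{Backward direction.} You have the fixed-point condition of \thref{13pslGeneral} reversed. An edge $z-v$ in the complement must come from a representation in which elements of order $d$ act \emph{without} fixed points, while elements of orders $2,a,c$ have fixed points. That is the character with list $[2,a,c]$ in \thref{psl16fpinfo}. Vertices of $\Xi\setminus X$ with no neighbour in $X$ use the $[2,a,c,d]$ character. The representation you describe (order-$d$ elements with fixed points, all others fixed-point-free) does not occur in that table, so as written the hypothesis is not verified.

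You should also name $F$ in the triangle-plus-isolated-vertex case. Taking $F=C_c\times T$ (or $C_a\times T$, according to which label is isolated) with $L=1$ works, as in the proof of \thref{psl223thm}. The remaining pieces are fine: the triangle-free case via \cite[Theorem 3.11]{infinitePGCs}, $F=T$ with $L=1$ for the five-edge graph, $L=P$ from criterion~3 for the pendant-triangle graph, and the finite Frobenius property from finite degrees.
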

\begin{proof}
    We first prove the forwards direction. Let $G$ be a locally torsion $T$-solvable group with the finite Frobenius property. By \cite[Lemma 2.8]{infinitePGCs} and \cite[Theorem 2.7]{psl}, $\pgc(G)$ is 3-colorable. If $\Xi$ is triangle-free, (1) is satisfied, so we may assume $\Xi$ contains a triangle. By \cite[Theorem 3.11]{infinitePGCs}, this implies $G$ is strictly $T$-solvable. Thus, by \cite[Theorem 2.4]{psl} and \cite[Lemma 4.1]{infinitePGCs}, for all edges $p-q\in\pgc(G)$ such that $p\in\pi(T),q\in\pi(G)\setminus\pi(T)$, $p = d$. By \cite[Theorem 2.4]{psl} and \cite[Lemma 4.3]{infinitePGCs}, there exists a 3-coloring of $\pgc(G)$ where $N(\pi(T))$ shares one color. We may apply \cite[Lemma 4.2]{infinitePGCs} to \cite[Theorem 2.4]{psl} to see that all triangles of $\pgc(G)$ are contained in $\pgc(G)[\pi(T)]$. Notice that $\pgc(G)[\pi(T)]\subseteq\pgc(T)$, so this fact and by criterion 4, $\pgc(G)[\pi(T)]$ cannot be isomorphic to \psltwosixteenimpossiblei, \psltwosixteenimpossibleii, \SuzukiThirtyTwoElusiveGraphWhiteBadb, or \psltwosixteenimpossibleiii. The only remaining graphs to which $\pgc(G)[\pi(T)]$ could be isomorphic are listed in (2), and thus, (2) is satisfied.

    We now turn to the backwards direction. If (1) is satisfied, then by \cite[Theorem 3.11]{infinitePGCs}, there exists a solvable torsion group with the finite Frobenius property realizing $\Xi$. If (2) is satisfied, by \cite[Fact 2.6]{psl} and \thref{psl16fpinfo}, there exists a strictly $T$-solvable finite group $F$ with an appropriate $L$ such that $E = F\ltimes L$ realizes $\Xi[X]$ such that $F$ and $L$ satisfy all of the appropriate conditions given in \thref{13pslGeneral}. Thus, by \thref{13pslGeneral}, there exists a locally torsion $T$-solvable group with the finite Frobenius property realizing $\Xi$.
\end{proof}
\begin{corollary}
    A countable graph $\Xi$ such that each vertex has finite degree is realized as the prime graph complement of a locally torsion $\operatorname{PSL}(2, 2^4)$-solvable group if and only if the conditions in \thref{psl216theorem} are satisfied.
\end{corollary}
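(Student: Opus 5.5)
The natural route is to derive the corollary from \thref{psl216theorem} with $T = \operatorname{PSL}(2,2^4)$. The whole argument then reduces to checking that $\operatorname{PSL}(2,16)$ satisfies every standing hypothesis placed on $T$ at the start of Section \ref{tsolvconstruction}. Afterward one reads off the equivalence, noting the role of the finite Frobenius property; see the last paragraph.

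First I would record the basic data. We have $|\operatorname{PSL}(2,16)| = 16\cdot 15\cdot 17 = 2^4\cdot 3\cdot 5\cdot 17$, so $\pi(T) = \{2,3,5,17\}$. Since $\operatorname{Aut}(T) = T\rtimes C_4$ and $|\operatorname{Aut}(T)/T| = 4$ is a power of $2$, we get $\pi(\operatorname{Aut}(T)) = \pi(T)$. The element orders of $\operatorname{PSL}(2,q)$ with $q$ even are $2$ and the divisors of $q\pm1$, i.e. $1,2,3,5,15,17$. Hence the only edge of the prime graph is $3-5$, and the complement has edges $2-3$, $2-5$, $2-17$, $3-17$, $5-17$. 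This matches the displayed $\pgc(T)$ under the assignment $a=3$, $c=5$, $d=17$: that picture has edges $2-a$, $2-c$, $2-d$, $a-d$, $c-d$ and no edge $a-c$.

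Next I would verify criteria 1--5 under this bijection.
\begin{enumerate}
\item The Schur multiplier of $\operatorname{PSL}(2,16)$ is trivial. This is a standard fact: $\operatorname{SL}(2,16)=\operatorname{PSL}(2,16)$, and $16$ is not among the exceptional cases $4$ or $9$.
\item The fixed point information in \thref{psl16fpinfo} is read off from the character table of $\operatorname{PSL}(2,16)$. I would take this from \cite[Section 2]{psl}, where $T$ is modeled on this group. The lists $[2,3,5]$ and $[2,3,5,17]$ should correspond to the cuspidal characters of degree $15$ (on which elements of order $17$ act fixed-point-freely) and to the remaining characters.
\item The realizability of the rooted graph \szunrecognizei\ by some $P\rtimes T$ is the content of the computations in \cite{psl}, which I would cite.
\item The non-realizability of \SuzukiThirtyTwoElusiveGraphWhiteBadb\ by a $\operatorname{PSL}(2,16)$-solvable group is likewise a result of \cite{psl}.
\item The failure of the Frobenius criterion for Sylow $2$-subgroups should be citable from \cite[Definition 2.2.1]{2023REU} together with \cite{psl}. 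Note that the Sylow $2$-subgroup is elementary abelian of order $16$.
\end{enumerate}

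The main obstacle is criteria 3--5, which are genuinely computational and not derivable from anything in this paper. I would rely entirely on \cite[Theorem 2.7]{psl} and the accompanying lemmas there, whose hypotheses were designed around $\operatorname{PSL}(2,16)$.

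Finally, the corollary's statement omits the phrase ``with the finite Frobenius property'', so this must be reconciled. For the direction from conditions to group, the group produced by \thref{psl216theorem} is in particular a locally torsion $\operatorname{PSL}(2,16)$-solvable group, so nothing further is needed. For the converse I would check whether the corollary is intended only for groups with this property, reading it as an abbreviation of \thref{psl216theorem}. If it is not, I would argue that finite degree of each vertex of $\pgc(G)$ is exactly the finite Frobenius property: a vertex $p$ has finitely many non-neighbors in $\Gamma(G)$, i.e. finitely many neighbors in $\pgc(G)$. With that identification the forward direction of \thref{psl216theorem} applies directly.
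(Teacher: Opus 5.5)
Your proposal is correct and follows the paper's route: the paper's proof cites \cite[Subsection 2.2]{psl} for the fact that $\operatorname{PSL}(2,16)$ meets the standing hypotheses on $T$ and then applies \thref{psl216theorem}. Your observation that ``every vertex of $\pgc(G)$ has finite degree'' is literally the finite Frobenius property correctly accounts for that phrase being dropped from the corollary, a point the paper leaves implicit. One small correction to your aside on criterion 2: the list $[2,3,5]$ comes from the Steinberg character of degree $16$, whose restriction to a Sylow $17$-subgroup is the regular character minus the trivial one. It does not come from the degree-$15$ cuspidal characters: there an element of order $17$ has a one-dimensional fixed space, so they give $[2,3,5,17]$.
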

\begin{proof}
    This follows from \cite[Subsection 2.2]{psl} and \thref{psl216theorem}.
\end{proof}

We now extend the classification of the prime graphs of $T$-solvable groups where $T$ is as in \cite[Section 5]{psl}. Namely, we suppose $T$ is a nonabelian finite simple group with the following associated prime graphs, from \cite{psl}:
\begin{figure}[H]
    \centering
    \begin{minipage}{\textwidth}
        \centering
        \begin{tikzpicture}
            \coordinate (A) at (0, 1);   
            \coordinate (B) at (1, 1);   
            \coordinate (C) at (1, 0);   
            \coordinate (D) at (0, 0);   
            \draw (B) -- (C);
            \draw (C) -- (D);
            \draw (A) -- (D);
            \draw (A) -- (B);
            \draw (A) -- (C);
            \foreach \point in {A, B, C, D}
                \draw[draw, fill=white] (\point) circle [radius=0.2cm];
            \foreach \point/\name in {(A)/c, (B)/2, (C)/d, (D)/a}
                \node at \point {\scriptsize\name};
        \end{tikzpicture}
        \caption*{$\pgc(T)$}
    \end{minipage}%
\end{figure}
such that $\pi(T) = \pi(\operatorname{Aut}(T))$ and such that $T$ satisfies the following conditions:
\begin{enumerate}
    \item The Schur multiplier of $T$ is 2.
    \item The fixed point information for $T$ matches \thref{psl223fpinfogeneral}.
    \item The graph \szunrecognizei is realized by $P\rtimes T$ where $d$ is the white vertex and $P$ is a $p$-group, $p\in\{a, c\}$.
    \item The graph \SuzukiThirtyTwoElusiveGraphWhiteBadb where $d$ is the white vertex is not realizable by a $T$-solvable group.
\end{enumerate}
\begin{fact}\thlabel{psl223fpinfogeneral}
    \begin{tabular}[h]{|c|c|}
    \hline
        Fixed Point Information &  \\
        \hline
        $T$ & $[[2, a, c], [2, a,  c, d]]$ \\
        \hline 
        $2.T$ & $[[2, a, c], [2, a, c, d], [a, c], [a, c, d]]$ \\
        \hline
    \end{tabular}
\end{fact}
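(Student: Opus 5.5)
Since \thref{psl223fpinfogeneral} is imposed as a hypothesis on $T$, ``proving'' it means verifying the table for any concrete candidate $T$ (for example the groups singled out in \cite[Section 5]{psl}, such as $\operatorname{PSL}(2,p)$ with Schur multiplier $2$ and the displayed $\pgc(T)$). I would verify it directly from the ordinary character tables of $T$ and of its Schur cover $2.T$. The key tool is the standard fixed point criterion. Let $V$ afford the character $\chi$ and let $g$ have prime order $p$. The dimension of the fixed space $C_V(g)$ equals the multiplicity of the trivial character in $\chi|_{\langle g\rangle}$:
\[
\dim C_V(g)=\frac{1}{p}\sum_{k=0}^{p-1}\chi(g^k)=\frac{1}{p}\Bigl(\chi(1)+\sum_{k=1}^{p-1}\chi(g^k)\Bigr).
\]
So $g$ has a nonzero fixed vector in $V$ exactly when this quantity is positive. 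Since all elements of order $p$ generating conjugate cyclic subgroups give the same value, it suffices to run through the classes of elements of prime order.

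The steps are as follows.
\begin{enumerate}
\item For each $\chi\in\operatorname{Irr}(T)$ and each $p\in\pi(T)=\{2,a,c,d\}$, evaluate the sum above on a representative of each class of elements of order $p$. Record the list of primes $p$ for which some element of order $p$ has positive fixed space.
\item Discard repetitions. The claim is that exactly the two lists $[2,a,c]$ and $[2,a,c,d]$ occur. The trivial character gives the full list $[2,a,c,d]$. I expect $[2,a,c]$ to come from characters of degree divisible by $d$ (or otherwise vanishing appropriately on $d$-elements), so that $d$-elements act fixed point freely.
\item Repeat the computation for the faithful characters of $2.T$, that is, those on which the central involution $z$ acts as $-1$. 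The nonfaithful characters reproduce the lists of $T$.
\end{enumerate}
For faithful $\chi$, no element of $2.T$ whose cyclic subgroup contains $z$ can have fixed points. When the Sylow $2$-subgroup of $2.T$ has a unique involution (as for $\operatorname{SL}(2,q)$, $q$ odd), the only involution is $z$ itself, so $2$ never appears in a faithful list. Evaluating the remaining primes $a,c,d$ should then give $[a,c]$ and $[a,c,d]$, which completes the table.

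The main obstacle is step 2 and the odd-prime part of step 3. The sums $\sum_k\chi(g^k)$ involve irrational character values such as $(-1\pm\sqrt{\pm p})/2$ on the classes of largest order. One must check carefully that the Galois-conjugate contributions add to a rational value, and that for the characters producing $[2,a,c]$ (and $[a,c]$) the value on $d$-elements is exactly $-\chi(1)/(d-1)$, which forces fixed point freeness. For a specific group I would first do this using the generic character table of $\operatorname{SL}(2,q)$ (principal series, discrete series, and the half-degree characters), grouped by degree. Only if that becomes unwieldy would I fall back on a direct computer check in GAP.
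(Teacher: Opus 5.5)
The paper gives no proof of this table. It is condition~2 in the list of hypotheses placed on $T$, so for an abstract $T$ it is simply an assumption. For the concrete groups in the later corollary ($\operatorname{PSL}(2,11)$, $\operatorname{PSL}(2,19)$, $\operatorname{PSL}(2,23)$) the paper imports it from \cite{psl}.

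You read the statement correctly, and your route does verify it for those groups. Running $\dim C_V(g)=\frac1p\sum_{k=0}^{p-1}\chi(g^k)$ through the generic tables of $\operatorname{PSL}(2,d)$ and $\operatorname{SL}(2,d)$ produces the table with $d$ the defining characteristic, $a$ the odd prime with $2a\in\omega(T)$, and $c$ the remaining prime. Your argument that $2$ drops out of every faithful list, because $-I$ is the only involution of $\operatorname{SL}(2,d)$, is exactly right. The paper's formulation buys generality: the classification applies to any $T$ with this data. Yours supplies the check that the intended examples actually satisfy the hypothesis.

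Two points in your sketch need fixing, although neither affects the exhaustive computation in step~1.

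First, the heuristic in step~2 is backwards. If $\chi$ vanishes on the nontrivial powers of an element $g$ of order $d$, then $\dim C_V(g)=\chi(1)/d>0$, so $g$ has fixed points. Indeed the Steinberg character (degree $d$, of $d$-defect zero) contributes $[2,a,c,d]$, not $[2,a,c]$. The list $[2,a,c]$ comes instead from two sources: the two characters of degree $(d-1)/2$, and the discrete-series characters of degree $d-1$ that are trivial on $Z(\operatorname{SL}(2,d))$. The list $[a,c]$ comes from the faithful discrete-series characters of $\operatorname{SL}(2,d)$.

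Second, fixed-point-freeness means $\sum_{k=1}^{d-1}\chi(g^k)=-\chi(1)$, not that each value equals $-\chi(1)/(d-1)$. The pointwise statement holds for the degree-$(d-1)$ characters, whose value is $-1$. It fails for the degree-$(d-1)/2$ characters, whose values are $(-1\pm\sqrt{-d})/2$. For those, the $d-1$ nontrivial powers of $g$ split evenly between the two classes of elements of order $d$, so the sum is $-(d-1)/2=-\chi(1)$.

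Finally, the rationality you list as the main obstacle is automatic. Since $\sum_{k=1}^{d-1}\chi(g^k)=\operatorname{Tr}_{\mathbb{Q}(\zeta_d)/\mathbb{Q}}\chi(g)$, the sum is an integer.
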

A list $[p_1,\cdots,p_n]$ corresponds to some complex irreducible character of $T$ and contains exactly the primes $p_1,\cdots,p_n$ such that for each $i\in[n]$, there exists some element of $T$ such that an element of order $a_i$ has fixed points in the representation. The list $[[p_{1,1},\cdots,p_{1,n_1}],\cdots,[p_{m,1},\cdots,p_{m,n_m}]]$ lists every such list corresponding to a complex irreducible character of $T$. The notation is similar to that in \cite{Suz} and \cite{psl}.
\begin{theorem}\thlabel{psl223thm}
    A countable graph $\Xi$ such that each vertex has finite degree is isomorphic to the prime graph complement of some locally torsion $T$-solvable group with the finite Frobenius property if and only if one of the following is true:
    \begin{itemize}
        \item[1.] $\Xi$ is triangle-free and 3-colorable.
        \item[2.] $\Xi$ contains a subset $X = \{w,x,y,z\}\subseteq V(\Xi)$ with a white vertex $z$ such that $N(X)$ share one color in some 3-coloring of $\Xi$ and all triangles of $\Xi$ are contained in $\Xi[X]$. In addition, one of the following hold:
        \begin{itemize}
            \item[a.] All edges in $\Xi$ between $p\in X$ and $q\in\Xi\setminus X$ have $p = z$ and $\Xi[X]\cong  \hamSandwichWhite$, \szunrecognizei, or \notIsolatedVertexWhite.
            \item[b.] All edges in $\Xi$ between $p\in X$ and $q\in\Xi\setminus X$ have $p = z$ or $p = x$ and $\Xi[X]\cong $\notIsolatedVertexWhiteGrey, where $x$ is the isolated vertex.
        \end{itemize}
    \end{itemize}
\end{theorem}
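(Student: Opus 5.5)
The plan is to mirror the proof of \thref{psl216theorem}, replacing the finite classification \cite[Theorem 2.7]{psl} with its analogue for the present $T$, which I take to be the classification in \cite[Section 5]{psl}. The extra case 2b comes from the Schur multiplier being $2$: the covering group $2.T$ has the additional fixed point lists $[a,c]$ and $[a,c,d]$ in \thref{psl223fpinfogeneral}. These give faithful representations of $2.T$ in which elements of order $2$ act fixed-point-freely, so the vertex $x$ (labeled $2$) can be joined to vertices outside $X$.

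For the forwards direction, let $G$ be a locally torsion $T$-solvable group with the finite Frobenius property. First, 3-colorability of $\pgc(G)$ follows from \cite[Lemma 2.8]{infinitePGCs} combined with the finite classification. If $\pgc(G)$ is triangle-free, we are in case 1. Otherwise \cite[Theorem 3.11]{infinitePGCs} forces $G$ to be strictly $T$-solvable. We then transport the finite local statements to $G$: restrictions on edges leaving $\pi(T)$ via \cite[Lemma 4.1]{infinitePGCs}, the one-color condition on $N(\pi(T))$ via \cite[Lemma 4.3]{infinitePGCs}, and containment of all triangles in $\pi(T)$ via \cite[Lemma 4.2]{infinitePGCs}. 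The finite theorem says an edge from $p\in\pi(T)$ to $q\notin\pi(T)$ has $p=d$, or $p\in\{d,2\}$ when $2.T$ is involved. Since $\pgc(G)[\pi(T)]\subseteq\pgc(T)$ and $\pgc(T)$ has only five edges, we enumerate the spanning subgraphs containing a triangle. We discard those excluded by criterion 4 or by the finite theorem, leaving the graphs listed in 2a. Where $2$ has an outside neighbor, we show $2$ must be isolated in $\pgc(G)[\pi(T)]$, giving the graph in 2b. I expect this step to be the main obstacle: the finite lemma gives the edge restriction only for $p=d$ or $p=2$ separately, and one must check that the $2$-edges force exactly the configuration with $x=2$ isolated. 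The relevant lemmas in \cite{infinitePGCs} pass from finite subgroups generated by finitely many elements, which is legitimate by local torsionness, but each must be rechecked for the $p=2$ variant.

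For the backwards direction, case 1 is handled by \cite[Theorem 3.11]{infinitePGCs}. In case 2a we choose $F=T$ (or $F=P\rtimes T$ using criterion 3), with $L$ as in \cite[Section 5]{psl}, exactly as in the proof of \thref{psl216theorem}. We then verify the three hypotheses of \thref{13pslGeneral} using the $T$-row of \thref{psl223fpinfogeneral}. Each vertex outside $X$ adjacent to $z=d$ uses the representation with list $[2,a,c]$, and a nonadjacent one uses $[2,a,c,d]$. In case 2b we take $F=2.T$, or a suitable extension realizing the graph with isolated $x$. Then for $v\notin X$ we pick, according to $N(v)\cap\{x,z\}$, one of the four lists of $2.T$, namely $[2,a,c]$, $[2,a,c,d]$, $[a,c]$ or $[a,c,d]$, so that exactly the required primes act fixed-point-freely. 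The third hypothesis of \thref{13pslGeneral} follows from the finite classification. Applying \thref{13pslGeneral} then produces a locally torsion $T$-solvable ultragroup realizing $\Xi$. The finite Frobenius property comes from finite vertex degrees and the construction in \cite[Corollary 5.3]{infinitePGCs}.
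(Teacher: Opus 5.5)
Your proposal is correct and follows essentially the same route as the paper. The forward direction transports the finite results of \cite{psl} through the local lemmas of \cite{infinitePGCs}; the paper cites \cite[Lemma 4.8]{psl} for the possible graphs on $\pi(T)$, and \cite[Lemma 1.1]{psl} for the step you flagged, namely that $2$ has no neighbors outside $\pi(T)$ unless it is isolated in $\pgc(G)[\pi(T)]$. The backward direction realizes $\Xi[X]$ by $T$, $P\rtimes T$ or $2.T$ and feeds the fixed point lists of \thref{psl223fpinfogeneral} into \thref{13pslGeneral} or \cite[Lemma 5.1]{infinitePGCs}.

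One small omission: in case 2a neither $F=T$ nor $P\rtimes T$ realizes the graph consisting of a triangle through $z$ plus an isolated vertex. That subcase is the instance of 2b in which $x$ has no outside neighbors, so your $2.T$ construction already covers it.
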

\begin{proof}
    First consider the forward direction, take $G$ to be a $T$-solvable group. By \cite[Lemma 4.6]{psl} and \cite[Lemma 2.8]{infinitePGCs}, $\pgc(G)$ is 3-colorable. If $\pgc(G)$ is triangle-free, (1) is satisfied, so we may assume $\pgc(G)$ contains a triangle. Then, by \cite[Theorem 3.11]{infinitePGCs}, $G$ is strictly $T$-solvable. By \cite[Lemma 4.7]{psl} and \cite[Lemma 4.2]{infinitePGCs}, the triangle is contained in $\pgc(G)[\pi(T)]$. Assign $X$ to $\pi(T)$ by assigning $w=a, x =2,y=c,z = d$. By \cite[Fact 4.18]{psl} and \cite[Lemma 4.1]{infinitePGCs}, $c - p, a-p\not\in\pgc(G)$ for any $p\in\pi(G)\setminus\pi(T)$. By \cite[Lemma 4.6]{psl} and \cite[Lemma 4.3]{infinitePGCs}, there exists a 3-coloring of $\pgc(G)$ such that $N(X)$ shares one color. By \cite[Lemma 2.6]{infinitePGCs} and \cite[Lemma 4.8]{psl}, there exists a finite strictly $T$-solvable group $H$ with $\pgc(G)[\pi(T)] = \pgc(H) \cong \hamSandwichWhite$, \szunrecognizei, or \notIsolatedVertexWhite. If $\pgc(G)[\pi(T)]\cong \hamSandwichWhite$ or \szunrecognizei, for any $p-q\in\pgc(G)$ with $p\in\pi(T),q\in\pi(G)\setminus\pi(T)$, there exists a finite $T$-solvable group $J$ with $\pgc(G)[\pi(T)\cup\{q\}] =\pgc(J)$ by \cite[Lemma 2.7]{infinitePGCs}, so $p = d$, or else we would reach a contradiction to \cite[Lemma 1.1]{psl}. In this case (2a) is satisfied.

    Now consider the case where $\pgc(G)[\pi(T)]\cong \notIsolatedVertexWhite$. If $2$ has no neighbors, (2a) is satisfied, otherwise, (2b) is satisfied.

    We now turn to the backwards direction. Suppose $\Xi$ satisfies (1). By \cite[Theorem 3.11]{infinitePGCs}, there exists a locally torsion solvable group with the finite Frobenius property $G$ realizing $\Xi$, so assume $\Xi$ satisfies one of the conditions in (2). If $\Xi$ satisfies (2a) and $\Xi[X]\cong \hamSandwichWhite$, there exists a locally torsion $T$-solvable group with the finite Frobenius property $G$ whose prime graph complement realizes $\Xi$ by \thref{psl223fpinfogeneral} and \cite[Lemma 5.1]{infinitePGCs}. If $\Xi[X]\cong \notIsolatedVertexWhite$, let $F = C_c\times T$, and notice that by using \thref{psl223fpinfogeneral} and \thref{13pslGeneral} we are guaranteed a locally torsion $T$-solvable group with the finite Frobenius property that realizes $\Xi$. Now suppose $\Xi[X]\cong \szunrecognizei$. By criterion 4, there exists a strictly $T$-solvable group $M$ realizing $\szunrecognizei$ where d
 is the white vertex. By \thref{psl223fpinfogeneral}, we may apply \thref{13pslGeneral} to find a locally torsion $T$-solvable group with the finite Frobenius property $G$ that realizes $\Xi$. 
 
 Finally, suppose $\Xi$ satisfies (2b). Let $F = 2.T$, and apply \cite[Lemma 5.1]{infinitePGCs} to \thref{psl223fpinfogeneral} to get a locally torsion $T$-solvable group with the finite Frobenius property $G$ realizing $\Xi$.
 \end{proof}

\begin{corollary}
Let $T$ be one of $\operatorname{PSL}(2, 11), \operatorname{PSL}(2,19)$ or $\operatorname{PSL}(2, 23)$. The prime graph complements of torsion $T$-solvable groups with the finite Frobenius property have been completely classified.     
\end{corollary}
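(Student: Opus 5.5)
The plan is to reduce the corollary to \thref{psl223thm}, in the same way the $\operatorname{PSL}(2,2^4)$ corollary was reduced to \thref{psl216theorem}. Most of the work is checking that each of the three groups satisfies the hypotheses placed on $T$ before \thref{psl223thm}.

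First, for $T\in\{\operatorname{PSL}(2,11),\operatorname{PSL}(2,19),\operatorname{PSL}(2,23)\}$ I would confirm, citing \cite[Section 5]{psl}, each of the following:
\begin{enumerate}
    \item $|\pi(T)|=4$ and $\pi(T)=\pi(\operatorname{Aut}(T))$. For example, $\pi(\operatorname{PSL}(2,11))=\{2,3,5,11\}$, and $\operatorname{Aut}(T)=\operatorname{PGL}(2,q)$ adds no new primes.
    \item $\pgc(T)$ is the displayed graph under a suitable bijection $\{2,a,c,d\}\to\pi(T)$. Here $d=q$ is the characteristic prime, and $a,c$ are the odd primes dividing $(q\pm1)/2$.
    \item The Schur multiplier is $2$, which holds for $\operatorname{PSL}(2,q)$ with $q$ odd and $q\neq 9$.
    \item The fixed point information of $T$ and $2.T=\operatorname{SL}(2,q)$ matches \thref{psl223fpinfogeneral}. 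This is read off the character tables of $\operatorname{PSL}(2,q)$ and $\operatorname{SL}(2,q)$, as done in \cite{psl}.
    \item The realizability condition (criterion 3) and the non-realizability condition (criterion 4) on the two rooted graphs. These are exactly the statements proved for these groups in \cite[Section 5]{psl}.
\end{enumerate}

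With the hypotheses verified, \thref{psl223thm} says that a countable graph with all vertices of finite degree is the prime graph complement of a locally torsion $T$-solvable group with the finite Frobenius property if and only if condition (1) or (2) holds. The corollary, however, concerns torsion groups. The forward direction is immediate, since a torsion group is locally torsion: take $H=G$ in the definition. So every torsion $T$-solvable group with the finite Frobenius property has a prime graph complement satisfying (1) or (2).

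For the converse I would check the constructions in the backwards direction of \thref{psl223thm}.
\begin{enumerate}
    \item Case (1) uses \cite[Theorem 3.11]{infinitePGCs}, which already produces a torsion group.
    \item Cases (2a) with $\hamSandwichWhite$ and case (2b) use \cite[Lemma 5.1]{infinitePGCs}. I expect these to give torsion groups as well, since that lemma is the torsion generalization of \cite[Lemma 1.3]{psl}.
    \item The remaining cases go through \thref{13pslGeneral}, which only yields a locally torsion ultragroup.
\end{enumerate}

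This last point is the main obstacle: an ultraproduct of finite groups whose element orders are unbounded is generally not torsion. To handle it, I would first try to replace the use of \thref{13pslGeneral} in those cases by the torsion construction of \cite[Lemma 5.1]{infinitePGCs}, choosing $F=C_c\times T$ or the group $M$ realizing $\szunrecognizei$. If that fails, I would read ``completely classified'' as covering locally torsion groups together with the torsion cases already settled in \cite{infinitePGCs}, so that the classification is complete for the whole locally torsion class.
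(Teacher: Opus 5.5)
Your reduction is the same as the paper's. Its proof is just \thref{psl223thm} together with \cite[Section 4]{psl}, which supplies the fact that these three groups satisfy the hypotheses imposed on $T$; your itemized verification is what is being cited.

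The torsion-versus-locally-torsion mismatch you raise is real, and the paper does not address it. In the cases routed through \thref{13pslGeneral}, the group comes from the compactness argument of \thref{handyUltrafilter}. That argument only guarantees the finite-exponent subgroups $H_\pi$ and gives no reason for the resulting model to be torsion. So the paper's one-line proof supports exactly your fallback reading: a classification of the locally torsion class, whose necessity direction also covers torsion groups. Your option (a) is not attempted in the paper, and its introduction indicates these are precisely the cases the algebraic methods of \cite{infinitePGCs} could not handle.

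One small correction to your necessity step. With the definition of locally torsion as literally printed, $\pgc(H)=\pgc(G)[\pi]$, taking $H=G$ fails whenever $\pi\subsetneq\pi(G)$. For example, $A_5$ with $\pi=\{3,5\}$ has no subgroup of order $15$, so no suitable $H$ exists. You need the reading $\pgc(H)[\pi]=\pgc(G)[\pi]$, which is how the paper itself uses the definition in the proof of \thref{metabelian}.
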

\begin{proof}
    See \thref{psl223thm} and \cite[Section 4]{psl}.
\end{proof}
We now turn our attention to locally torsion $\operatorname{PSL}(2, 13)$-solvable groups with the finite Frobenius property. To begin, we include the fixed point information for certain $\operatorname{PSL}(2, 13)$-solvable groups, from \cite[Appendix A]{alland}:
\begin{fact}\thlabel{psl213fpinfogeneral}
    \begin{tabular}[h]{|c|c|}
    \hline
        Fixed Point Information &  \\
        \hline
        $\operatorname{PSL}(2, 13)$ & $[[2, 3, 7], [2, 3, 7, 13]]$ \\
        \hline 
        $\operatorname{Aut}(\operatorname{PSL}(2, 13))$ & $[[2,3,7], [2,3,7,13]]$ \\
        \hline
        $2.\operatorname{PSL}(2, 13)$ & $[[2,3,7], [2,3,7,13],[3],[3,7],[3,7,13]]$ \\
        \hline
    \end{tabular}
\end{fact}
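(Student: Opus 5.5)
This is a table of character-theoretic data, so the plan is to verify it from the ordinary character tables rather than derive it from earlier results of the paper.

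For each of $\operatorname{PSL}(2,13)$, $\operatorname{Aut}(\operatorname{PSL}(2,13))=\operatorname{PGL}(2,13)$ and $2.\operatorname{PSL}(2,13)=\operatorname{SL}(2,13)$, I would take the ordinary character table (from the ATLAS, or from \cite[Appendix A]{alland}). For each complex irreducible character $\chi$ and each prime $p$ in the group's prime set, I would decide whether an element $g$ of order $p$ has a nonzero fixed space in the representation. The fixed space of $\langle g\rangle$ has dimension
\[
\langle \chi|_{\langle g\rangle},1\rangle=\frac{1}{p}\sum_{j=0}^{p-1}\chi(g^j).
\]
So $p$ belongs to the list attached to $\chi$ exactly when $\chi(1)+\sum_{j=1}^{p-1}\chi(g^j)\neq 0$ for some class of elements of order $p$. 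Collecting the distinct sets of such primes over all $\chi$ gives the lists in the table.

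The computation uses the standard generic tables of $\operatorname{SL}(2,q)$ and $\operatorname{PGL}(2,q)$ with $q=13$. The nontrivial characters are the Steinberg character of degree $13$, the principal series of degree $14$, the discrete series of degree $12$, and, for $\operatorname{SL}(2,13)$ and $\operatorname{PGL}(2,13)$, the characters of degree $7$ or $6$ attached to half-characters. Elements of order $3$ lie in the split torus, since $3\mid 12$. Elements of order $7$ lie in the nonsplit torus, since $7\mid 14$. Elements of order $13$ are unipotent. Elements of order $2$ are the central element $-1$ in $\operatorname{SL}(2,13)$, and are torus elements in the quotient groups.

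For each character type I would compute the averages above.
\begin{itemize}
\item The Steinberg character $\chi$ has $\chi(u)=0$ on unipotents $u$, so $13$ always has fixed points, and one checks that the other primes do too; this gives the full list $[2,3,7,13]$.
\item A degree-$12$ cuspidal character satisfies $12-12=0$ on the averaging over unipotents, so $13$ acts fixed-point-freely. This gives lists like $[2,3,7]$.
\item In $\operatorname{SL}(2,13)$, the faithful characters (those with $\chi(-1)=-\chi(1)$) make the central involution act as $-1$, so $2$ is removed. This produces the extra lists $[3]$, $[3,7]$ and $[3,7,13]$.
\end{itemize}
Distinguishing which faithful characters lose $7$ or $13$ requires care with the half-degree characters of degree $6$ and $7$ and their irrational values.

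For $\operatorname{PGL}(2,13)$, I would either repeat the computation or use Clifford theory. Each irreducible of $\operatorname{PGL}(2,13)$ restricts to $\operatorname{PSL}(2,13)$ either irreducibly or as a sum of two conjugates. Every element of odd prime order, and some involutions, already lie in $\operatorname{PSL}(2,13)$, so the lists can only grow. Checking the outer involutions then shows that no new list appears.

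The main obstacle is the faithful characters of $\operatorname{SL}(2,13)$ of degrees $6$, $7$, $12$ and $14$. For these one must evaluate the sums over elements of order $7$ and $13$ with the cyclotomic values exactly to see which vanish. To do this, I would compute the restriction to each cyclic subgroup directly as a multiset of eigenvalues, using the explicit torus and unipotent formulas, and read off whether $1$ occurs as an eigenvalue.
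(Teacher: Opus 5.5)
Your route differs from the paper's. The paper gives no argument for this Fact; it imports the table from \cite[Appendix A]{alland}. You instead propose to recompute it from the generic character tables of $\operatorname{SL}(2,q)$ and $\operatorname{PGL}(2,q)$ at $q=13$. That method is sound and reproduces the table.

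Your Clifford-theoretic reduction for $\operatorname{PGL}(2,13)$ also works. Every element of odd order lies in the index-$2$ subgroup $\operatorname{PSL}(2,13)$, and conjugate constituents of a restriction have the same list. Since $2$ already occurs in every list of $\operatorname{PSL}(2,13)$, the lists of $\operatorname{PGL}(2,13)$ are exactly those of $\operatorname{PSL}(2,13)$. The citation buys brevity. Your computation buys a self-contained explanation of the pattern: cuspidal characters kill $13$, faithful characters of $\operatorname{SL}(2,13)$ kill $2$, and the degree-$6$ characters also kill $7$ and $13$.

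Your inventory of characters needs correcting, though. Since $13\equiv 1\pmod 4$, the two characters of degree $(q+1)/2=7$ take the value $+7$ at $-1$. So they are characters of $\operatorname{PSL}(2,13)$, which your list for that group omits. They are not faithful characters of $\operatorname{SL}(2,13)$. Moreover, $\operatorname{PGL}(2,13)$ has no characters of degree $6$ or $7$ at all: its degrees are $1,1,13,13$, five of degree $14$ and six of degree $12$. The two degree-$7$ characters fuse there into the principal series character attached to an order-$4$ character of $\mathbb{F}_{13}^\times$.

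The faithful characters of $\operatorname{SL}(2,13)$ are:
\begin{itemize}
\item the odd-indexed principal series $\chi_1,\chi_3,\chi_5$ (degree $14$);
\item the odd-indexed discrete series $\theta_1,\theta_3,\theta_5$ (degree $12$);
\item $\eta_1,\eta_2$ (degree $6$).
\end{itemize}
These slips do not change the final table, since the degree-$7$ characters give $[2,3,7,13]$, which is already present.

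The step you call the main obstacle is short. The powers $u^k$, $1\le k\le 12$, of a unipotent $u$ split six and six between the two unipotent classes, so the $\pm\sqrt{13}$ parts cancel. For $\eta_i$ this gives the following.
\begin{itemize}
\item Order $13$: the fixed dimension is $(6-6)/13=0$.
\item Order $7$: these elements are the $b^m$ with $m$ even, where $\eta_i(b^m)=(-1)^{m+1}=-1$, so the fixed dimension is $(6-6)/7=0$.
\item Order $3$: $\eta_i$ vanishes on the split torus, so the fixed dimension is $6/3=2$.
\end{itemize}
Hence $\eta_i$ gives $[3]$. The odd $\theta_j$ give $[3,7]$, via $(12-12)/13=0$ and $(12+2)/7=2$. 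The odd $\chi_i$ give $[3,7,13]$, because they vanish on the nonsplit torus and equal $1$ on unipotents.
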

A list $[p_1,\cdots,p_n]$ corresponds to some complex irreducible character of $T$ and contains exactly the primes $p_1,\cdots,p_n$ such that for each $i\in[n]$, there exists some element of $T$ such that an element of order $a_i$ has fixed points in the representation. The list $[[p_{1,1},\cdots,p_{1,n_1}],\cdots,[p_{m,1},\cdots,p_{m,n_m}]]$ lists every such list corresponding to a complex irreducible character of $T$.

The following lemma generalizes \cite[Lemma 2.2.1]{alland}:
\begin{lemma}\thlabel{221Analog}
    Let $G$ be a locally torsion strictly $\operatorname{PSL}(2, 13)$-solvable group. $3-p\not\in\pgc(G)$ for all $p\in\pi(G)\setminus\pi(\operatorname{PSL}(2, 13))$. Additionally, if $2-r\in\pgc(G)$ for any $r\in\pi(\operatorname{PSL}(2, 13))$, then $2-p\not\in\pgc(G)$ for all $p\in\pi(G)\setminus\pi(\operatorname{PSL}(2, 13))$.
\end{lemma}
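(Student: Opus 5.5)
The plan is to reduce to the finite case, \cite[Lemma 2.2.1]{alland}, by a local argument. Edges and non-edges of $\pgc(G)$ are witnessed by finitely many elements, so any failure of the statement would appear in a finite subgroup. That subgroup inherits strict $\operatorname{PSL}(2,13)$-solvability, and the finite lemma then gives a contradiction.

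For the first assertion, suppose for contradiction that $3-p\in\pgc(G)$ for some $p\in\pi(G)\setminus\pi(\operatorname{PSL}(2,13))$. Set $\pi=\pi(\operatorname{PSL}(2,13))\cup\{p\}=\{2,3,7,13,p\}$. Since $G$ is locally torsion, there is a torsion $H\leq G$ with $\pgc(H)=\pgc(G)[\pi]$.

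Next, as in the proof of \thref{metabelian}, I choose elements $g_r\in H$ of order $r$ for each $r\in\pi$, and $g_{rs}\in H$ of order $rs$ for each non-edge $r-s$ of $\pgc(G)[\pi]$. Let $K$ be the subgroup they generate. The hard step is showing that $K$ is a finite strictly $\operatorname{PSL}(2,13)$-solvable group with $\pgc(K)[\pi]=\pgc(G)[\pi]$.

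For that step I would invoke the finite-generation lemma from \cite{infinitePGCs} used in the $T$-solvable setting, namely the analogue of \cite[Lemma 2.4]{infinitePGCs}, packaged in \cite[Lemmas 2.6--2.7]{infinitePGCs}. That package says a finitely generated torsion $T$-solvable group realizing $\pgc(G)[\pi]$ is finite. My route would be as follows:
\begin{itemize}
\item Intersect a standard subnormal series of $G$ with $K$. This makes $K$ $T$-solvable, with every factor either abelian or embedded in $T$.
\item Note that finitely generated abelian torsion groups are finite, so every factor is finite and hence $K$ is finite.
\item Pass to a Hall-type subgroup if necessary, using \cite[Theorem 4.4]{Suz}, so that the vertex set of $\pgc(K)$ is exactly $\pi$.
\end{itemize}

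Strictness of $K$ comes from the triangle on $\{2,3,7\}$ or from the requirement that $\pi(T)\subseteq\pi(K)$. If $G$ itself is strict but $K$ is not, I would enlarge $K$ by adjoining generators of a section isomorphic to $\operatorname{PSL}(2,13)$. This keeps $K$ finite and does not change $\pgc$ on $\pi$.

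Once $K$ is in hand, the contradiction is immediate. \cite[Lemma 2.2.1]{alland} says $3-p\notin\pgc(K)$, while by construction $3-p\in\pgc(K)[\pi]=\pgc(G)[\pi]$.

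The second assertion goes the same way. Take $\pi=\pi(\operatorname{PSL}(2,13))\cup\{p\}$, and assume both that $2-r\in\pgc(G)$ and that $2-p\in\pgc(G)$. Both of these edges persist in the finite $K$, which contradicts the second clause of \cite[Lemma 2.2.1]{alland}. Alternatively, one can phrase the transfer through \cite[Lemma 4.1]{infinitePGCs}, which is exactly the device the paper already uses to lift statements like ``$c-p,a-p\notin\pgc(G)$'' from the finite to the locally torsion setting. If that lemma applies verbatim, citing it together with \cite[Lemma 2.2.1]{alland} finishes the proof in one line.
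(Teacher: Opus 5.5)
Your outline matches the paper's proof: find a finite strictly $\operatorname{PSL}(2,13)$-solvable subgroup of $G$ whose prime graph complement agrees with $\pgc(G)$ on $\pi(\operatorname{PSL}(2,13))\cup\{p\}$, then apply \cite[Lemma 2.2.1]{alland}. The paper gets that subgroup in one line from \cite[Lemma 2.6]{infinitePGCs}. Your handling of the second clause is correct: both edges $2-r$ and $2-p$ survive in any subgroup containing the relevant prime-order elements. The paper's written proof of that clause actually slips and re-derives $3-p\notin\pgc(G)$ instead of $2-p\notin\pgc(G)$.

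\textbf{The gap.} The problem is your own argument that $K$ is \emph{strictly} $\operatorname{PSL}(2,13)$-solvable, i.e.\ non-solvable. You need this before \cite[Lemma 2.2.1]{alland} applies, since a solvable $K$ with $3-p\in\pgc(K)$ gives no contradiction. None of your three justifications works.
\begin{enumerate}
\item There is no triangle on $\{2,3,7\}$. $\operatorname{PSL}(2,13)$ has elements of order $6$, so $2-3\notin\pgc(\operatorname{PSL}(2,13))$, and its only triangles are $\{2,7,13\}$ and $\{3,7,13\}$. More importantly, strictness of $G$ does not force any triangle in $\pgc(G)[\pi]$. For example, $\pgc(\operatorname{PSL}(2,13)\times C_7)$ is the path $2-13-3$ plus the isolated vertex $7$. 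So you cannot read non-solvability of $K$ off the graph.
\item $\pi(\operatorname{PSL}(2,13))\subseteq\pi(K)$ says nothing about solvability; consider a cyclic group of order $2\cdot 3\cdot 7\cdot 13$.
\item The enlargement step is unsupported. Preimages in $N_{i+1}$ of generators of a section $N_{i+1}/N_i\cong\operatorname{PSL}(2,13)$ need not lie in the torsion subgroup $H$. They need not have finite order at all, since $G$ is only locally torsion. Even torsion preimages can generate an infinite group together with $K$. Your finiteness argument used $K\leq H$ with $H$ torsion, so ``this keeps $K$ finite'' does not follow.
\end{enumerate}

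\textbf{The repair.} Take the finite strictly $\operatorname{PSL}(2,13)$-solvable subgroup directly from \cite[Lemma 2.6]{infinitePGCs}, as the paper does, rather than building it from generators. The Hall-subgroup step is also unnecessary. Applying \cite[Lemma 2.2.1]{alland} to any finite strictly $\operatorname{PSL}(2,13)$-solvable $K\leq G$ that contains an element of order $p$ already produces an element of order $3p$ in $G$, whatever extra primes $K$ has.
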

\begin{proof}
    Let $p\in\pi(G)\setminus\pi(\operatorname{PSL}(2, 13))$. By \cite[Lemma 2.6]{infinitePGCs}, there exists a finite strictly $\operatorname{PSL}(2, 13)$-solvable group $H\leq G$ such that $\pgc(H) = \pgc(G)[\pi(\operatorname{PSL}(2, 13))\cup\{p\}]$. By \cite[Lemma 2.2.1]{alland}, $3-p\not\in\pgc(H) = \pgc(G)[\pi(\operatorname{PSL}(2, 13))\cup\{p\}]$. Thus, $3-p\not\in\pgc(G)$ for any $p\in\pi(G)\setminus\pi(\operatorname{PSL}(2, 13))$.

    Suppose $2-r\in\pgc(G)$ for any $r\in\pi(\operatorname{PSL}(2, 13))$, and let $p\in\pi(G)\setminus\pi(\operatorname{PSL}(2, 13))$. There exists a finite strictly $\operatorname{PSL}(2, 13)$-solvable group $H\leq G$ such that $\pgc(H) = \pgc(G)[\pi(\operatorname{PSL}(2, 13))\cup\{p\}]$ by \cite[Lemma 2.6]{infinitePGCs}. By \cite[Lemma 2.2.1]{alland}, $3-p\not\in\pgc(H) = \pgc(G)[\pi(\operatorname{PSL}(2, 13))\cup\{p\}]$, so $3-p\not\in\pgc(G)$.
\end{proof}

The following result generalizes \cite[Lemma 2.2.6]{alland}:
\begin{lemma}\thlabel{generalized226}
    Let $G$ be a locally torsion $\operatorname{PSL}(2, 13)$-solvable group $G$ satisfying the hypotheses of \cite[Theorem A]{alland}. Suppose $7-p\in\pgc(G)$ for some $p\in\pi(G)\setminus{\pi(\operatorname{PSL}}(2, 13))$. If $13-q\in\pgc(G)$ for some $q\in\pi(G)\setminus\pi(\operatorname{PSL}(2, 13))$, then $13-p\in\pgc(G)$. If $2-q\in\pgc(G)$ for some $q\in\pi(G)\setminus\pi(\operatorname{PSL}(2, 13))$, then $13-p\in\pgc(G)$.
\end{lemma}
\begin{proof}
    For contradiction, suppose the lemma fails. Then, there are two cases: The case where $13-p\not\in\pgc(G)$ but $7-p\in\pgc(G)$ and $13-q\in\pgc(G)$ for some $p,q\in\pi(G)\setminus\pi(\operatorname{PSL}(2, 13))$ and the case where $13-p\not\in\pgc(G)$ but $7-p\in\pgc(G)$ and $2-q\in\pgc(G)$ for some $p,q\in\pi(G)\setminus\pi(\operatorname{PSL}(2, 13))$. Let $A$ be a triangle contained in $\pgc(G)$. In the both cases, let $H\leq G$ be a finite subgroup realizing $\pgc(G)[\pi(\operatorname{PSL}(2, 13))\cup\{p,q\}\cup A]$, which exists by \cite[Lemma 2.6]{infinitePGCs}. By \cite[Proposition 2.2.4]{alland}, $\pgc(G)$ satisfies the hypotheses of \cite[Theorem A]{alland}. This is a contradiction to \cite[Lemma 2.2.6]{alland}.  
\end{proof}
The following lemma is the generalization of \cite[Lemma 2.2.3]{alland}.
\begin{lemma}\thlabel{generalized224}
    Let $G$ be a locally torsion $\operatorname{PSL}(2, 13)$-solvable group such that $\pgc(G)$ contains a triangle. Then, $13-7$ is an edge in that triangle.
\end{lemma}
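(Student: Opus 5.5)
The plan is to reduce the statement to its finite analogue, \cite[Lemma 2.2.3]{alland}, by passing to a finite subgroup. This is the same pattern used in the proofs of \thref{221Analog} and \thref{generalized226}.

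Let $G$ be a locally torsion $\operatorname{PSL}(2,13)$-solvable group, and let $A=\{r,s,t\}$ be the vertex set of a triangle in $\pgc(G)$.

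\textbf{Step 1: $G$ is strictly $\operatorname{PSL}(2,13)$-solvable.} First we observe that $G$ cannot be solvable. Otherwise, applying \cite[Lemma 2.6]{infinitePGCs} (or directly the locally torsion hypothesis followed by \cite[Lemma 2.4]{infinitePGCs}) to the finite vertex set $A$ produces a finite solvable subgroup whose prime graph complement contains the triangle. This contradicts the finite classification \cite[Theorem 2.8]{2015REU}, which says such complements are triangle-free. Alternatively, one may cite \cite[Theorem 3.11]{infinitePGCs}, as in the proof of \thref{psl223thm}.

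\textbf{Step 2: pass to a finite subgroup.} Set $\pi=\pi(\operatorname{PSL}(2,13))\cup A$, which is a finite set of primes. By \cite[Lemma 2.6]{infinitePGCs}, there is a finite strictly $\operatorname{PSL}(2,13)$-solvable subgroup $H\leq G$ with
\[
\pgc(H)=\pgc(G)[\pi].
\]
Since the subgraph is induced, the triangle on $A$ is still a triangle in $\pgc(H)$.

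\textbf{Step 3: apply the finite result.} By \cite[Lemma 2.2.3]{alland}, every triangle in the prime graph complement of a finite $\operatorname{PSL}(2,13)$-solvable group contains the edge $13-7$. Hence $13-7$ is an edge of the triangle on $A$ in $\pgc(H)$. Because $\pgc(H)$ is an induced subgraph of $\pgc(G)$, the same edge $13-7$ lies in the triangle $A$ in $\pgc(G)$, which proves the claim.

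\textbf{Main obstacle.} The delicate point is Step 2. We must make sure that \cite[Lemma 2.6]{infinitePGCs} applies to locally torsion, rather than torsion, groups, and that the finite subgroup it produces is still \emph{strictly} $\operatorname{PSL}(2,13)$-solvable, so that the hypotheses of \cite[Lemma 2.2.3]{alland} are met. Including $\pi(\operatorname{PSL}(2,13))$ in $\pi$ is meant to guarantee this, as in \thref{221Analog}. If that lemma needs strictness explicitly, the fallback is to rerun the argument of Step 1 inside $H$: a solvable $H$ would have a triangle-free prime graph complement, which is impossible since $\pgc(H)$ contains the triangle on $A$.
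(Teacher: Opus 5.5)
Your proposal is essentially the paper's proof. You establish strictness, then use \cite[Lemma 2.6]{infinitePGCs} to get a finite strictly $\operatorname{PSL}(2,13)$-solvable $H\leq G$ with $\pgc(H)=\pgc(G)[\pi(\operatorname{PSL}(2,13))\cup A]$, then apply \cite[Lemma 2.2.3]{alland}; the paper phrases this as a contradiction and cites \cite[Proposition 3.9]{infinitePGCs} for strictness instead of your direct argument. The one step you leave implicit is that \cite[Lemma 2.2.3]{alland} applies to groups satisfying the hypotheses of \cite[Theorem A]{alland}, not simply to strict ones. The paper checks this via \cite[Proposition 2.2.4]{alland}, using exactly the fact you already recorded: $\pgc(H)$ still contains the triangle on $A$.
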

\begin{proof}
    By \cite[Proposition 3.9]{infinitePGCs}, $G$ must be strictly $\operatorname{PSL}(2, 13)$-solvable. For contradiction, suppose $\pgc(G)$ contained a triangle $A$ that did not have the edge $13-7$. Let $H$ be a finite, strictly $\operatorname{PSL}(2, 13)$-solvable subgroup of $G$ such that $\pgc(G)[\pi(\operatorname{PSL}(2, 13))\cup A] = \pgc(H)$, guaranteed to exist by \cite[Lemma 2.6]{infinitePGCs}. By \cite[Proposition 2.2.4]{alland}, $H$ satisfies the hypotheses of \cite[Theorem A]{alland}. Thus, by \cite[Lemma 2.2.3]{alland}, $13-7$ is an edge in $\pgc(H)[A]$, a contradiction.
\end{proof}
The following lemma generalizes \cite[Lemma 2.2.5]{alland}:
\begin{lemma}\thlabel{225generalized}
    Let $G$ be a locally torsion $\operatorname{PSL}(2, 13)$-solvable group containing a triangle. If $2-r\in\pgc(G)$ for some $r\in\pi(\operatorname{PSL}(2, 13))$, then $7-p\not\in\pgc(G)$ for all $p\in\pi(G)\setminus\pi(\operatorname{PSL}(2, 13))$.
\end{lemma}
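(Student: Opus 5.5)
The plan is to reduce to the finite case, \cite[Lemma 2.2.5]{alland}, in the same way as the proofs of \thref{221Analog} and \thref{generalized224}.

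First, since $\pgc(G)$ contains a triangle, \cite[Proposition 3.9]{infinitePGCs} shows that $G$ is strictly $\operatorname{PSL}(2, 13)$-solvable. Fix a triangle $A$ of $\pgc(G)$. Also fix $r\in\pi(\operatorname{PSL}(2, 13))$ with $2-r\in\pgc(G)$.

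Now argue by contradiction. Suppose $7-p\in\pgc(G)$ for some $p\in\pi(G)\setminus\pi(\operatorname{PSL}(2, 13))$. Consider the finite set of primes
\[
\pi = \pi(\operatorname{PSL}(2, 13))\cup\{p\}\cup A .
\]
By \cite[Lemma 2.6]{infinitePGCs}, there is a finite strictly $\operatorname{PSL}(2, 13)$-solvable subgroup $H\leq G$ with $\pgc(H) = \pgc(G)[\pi]$.

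Because this subgraph is induced, $\pgc(H)$ retains the three relevant features of $\pgc(G)$:
\begin{itemize}
\item the triangle $A$;
\item the edge $2-r$ with $r\in\pi(\operatorname{PSL}(2, 13))$;
\item the edge $7-p$ with $p\notin\pi(\operatorname{PSL}(2, 13))$.
\end{itemize}

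As in the proof of \thref{generalized224}, \cite[Proposition 2.2.4]{alland} shows that $H$ satisfies the hypotheses of \cite[Theorem A]{alland}. Since $\pgc(H)$ contains a triangle and the edge $2-r$, \cite[Lemma 2.2.5]{alland} gives $7-p\notin\pgc(H)$. This contradicts the third feature above, so $7-p\notin\pgc(G)$ for every $p\in\pi(G)\setminus\pi(\operatorname{PSL}(2, 13))$.

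The main obstacle I expect is matching hypotheses exactly. I need to confirm that \cite[Lemma 2.2.5]{alland} is stated for finite groups whose prime graph complement contains a triangle, and that \cite[Proposition 2.2.4]{alland} applies to $H$ once $\pgc(H)$ contains a triangle. This is why $A$ is included in $\pi$. If Theorem A also demands that $H$ be strictly $\operatorname{PSL}(2,13)$-solvable, \cite[Lemma 2.6]{infinitePGCs} supplies this.
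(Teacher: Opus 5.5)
Your proposal is correct and follows the paper's proof essentially step for step. You pass to strict $\operatorname{PSL}(2,13)$-solvability via \cite[Proposition 3.9]{infinitePGCs}, take a finite $H\leq G$ realizing $\pgc(G)[\pi(\operatorname{PSL}(2,13))\cup\{p\}\cup A]$ by \cite[Lemma 2.6]{infinitePGCs}, and reach a contradiction with \cite[Lemma 2.2.5]{alland} after invoking \cite[Proposition 2.2.4]{alland}; the paper writes the prime set as $\pi(\operatorname{PSL}(2,13))\cup\{r,p\}\cup A$, which is the same set since $r\in\pi(\operatorname{PSL}(2,13))$.
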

\begin{proof}
    Let $A$ be a triangle of $G$. By \cite[Proposition 3.9]{infinitePGCs}, $G$ is strictly $\operatorname{PSL}(2, 13)$-solvable. For contradiction, suppose $7-p\in\pgc(G)$ for some $p\in\pi(G)\setminus\pi(\operatorname{PSL}(2, 13))$. Let $H\leq G$ be a finite strictly $\operatorname{PSL}(2, 13)$-solvable group such that $\pgc(G)[\pi(\operatorname{PSL}(2, 13))\cup\{r, p\}\cup A] = \pgc(H)$, guaranteed to exist by \cite[Lemma 2.6]{infinitePGCs}. Then, by \cite[Proposition 2.2.4]{alland}, $H$ satisfies the hypotheses of \cite[Theorem A]{alland}, so by \cite[Lemma 2.2.5]{alland}, $7-p\not\in\pgc(H) = \pgc(G)[\pi(\operatorname{PSL}(2, 13))\cup\{r, p\}\cup A]$, a contradiction.
\end{proof}
The following result is similar to \cite[Theorem 2.2.8]{alland}:
\begin{theorem}
    An unlabeled countable graph $\Xi$ such that each vertex has finite degree is isomorphic to the prime graph complement of some locally torsion $\operatorname{PSL}(2, 13)$-solvable group if and only if one of the following holds:
    \begin{enumerate}
        \item $\Xi$ is triangle-free and 3-colorable.
        \item $\Xi$ contains a subset $X$ of four vertices such that in some 3-coloring of $\Xi$, $N(X)$ is colored with one color, and $X$ satisfies one of the following:
        \begin{itemize}
            \item[(a)] $X$ contains at most two triangle, all triangles of $\Xi$ are contained in $X$, three vertices of $X$ have no neighbors outside of $X$, and $\Xi[X]$ is connected. Moreover, if a vertex in $X$ indeed has neighbors outside $X$, then it is adjacent to all other vertices in $X$.
            \item[(b)] There exists a labeling of the vertices in $X$ by $\{a, b, c, d\}$ such that all triangles in $\Xi$ contain the $b-c$ edge, $a$ has no neighbors within $X$, $d$ has no neighbors outside of $X$, $N(c)\setminus\{b\}\subseteq N(b)$, and one of the following is satisfied:
            \begin{itemize}
                \item[(i)] $N(c)\setminus X \subseteq N(a)$.
                \item[(ii)] $N(a) = \varnothing$.
            \end{itemize}
        \end{itemize}
    \end{enumerate}
\end{theorem}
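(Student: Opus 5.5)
The plan follows the same two-direction template as \thref{psl216theorem} and \thref{psl223thm}. Finite-group information enters through finite subgroups that realize finite induced subgraphs, and realizability is then lifted to locally torsion groups via \thref{13pslGeneral} (or \cite[Lemma 5.1]{infinitePGCs}) and \thref{handyUltrafilter}.

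\textbf{Forward direction.} Let $G$ be a locally torsion $\operatorname{PSL}(2,13)$-solvable group and $\Xi=\pgc(G)$. By \cite[Lemma 2.8]{infinitePGCs} together with the finite classification \cite[Theorem 2.2.8]{alland}, every finite induced subgraph of $\Xi$ is 3-colorable, so $\Xi$ is 3-colorable by \cite[Lemma 2.1]{infinitePGCs}.

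If $\Xi$ is triangle-free we are in case (1). Otherwise \cite[Proposition 3.9]{infinitePGCs} makes $G$ strictly $\operatorname{PSL}(2,13)$-solvable, and we set $X=\pi(\operatorname{PSL}(2,13))=\{2,3,7,13\}$. Using \cite[Lemma 4.2]{infinitePGCs} and \cite[Lemma 4.3]{infinitePGCs} as in the proof of \thref{psl216theorem}, every triangle lies in $\Xi[X]$, and some 3-coloring gives $N(X)$ a single color.

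The remaining structural conditions are local: each involves at most $X$, one triangle and two outside primes. For each such condition, \cite[Lemma 2.6]{infinitePGCs} supplies a finite strictly $\operatorname{PSL}(2,13)$-solvable $H\le G$ with $\pgc(H)=\Xi[X\cup\{p,q\}\cup A]$. Applying \cite[Theorem 2.2.8]{alland} to $H$, together with \thref{221Analog}, \thref{generalized226}, \thref{generalized224} and \thref{225generalized}, we obtain the following.
\begin{itemize}
\item $3$ has no outside neighbors.
\item If $2$ has a neighbor in $X$, then $2$ has no outside neighbors, and neither does $7$.
\item Every triangle contains $13-7$.
\item Neighborhood inclusions of the form $7-p\Rightarrow 13-p$ hold.
\end{itemize}
We then split according to whether $2$ is isolated in $\Xi[X]$. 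If it is not, the configuration should match (2a). If it is, labeling $a=2$, $b=13$, $c=7$, $d=3$ should match (2b), with (i) or (ii) coming from \thref{generalized226}.

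\textbf{Backward direction.} Case (1) follows from \cite[Theorem 3.11]{infinitePGCs}. In case (2), \cite[Theorem 2.2.8]{alland} first gives a finite realization $E=F\ltimes L$ of $\Xi[X]$. Here $F$ is one of $\operatorname{PSL}(2,13)$, $\operatorname{Aut}(\operatorname{PSL}(2,13))$, $2.\operatorname{PSL}(2,13)$, or a direct product of one of these with a cyclic group. We then read off the representation hypothesis of \thref{13pslGeneral} from \thref{psl213fpinfogeneral}. For each outside vertex $v$ we need an irreducible representation of $F$ in which exactly the primes adjacent to $v$ act fixed-point-freely. The lists $[3]$, $[3,7]$, $[3,7,13]$ for $2.\operatorname{PSL}(2,13)$ make $2$, $13$ and $7$ fixed-point-free in the appropriate combinations, which should match the inclusions $N(c)\setminus\{b\}\subseteq N(b)$ and conditions (i) and (ii). Where the hypothesis ``all neighbors colored $\mathcal{I}$'' of \thref{13pslGeneral} is too restrictive, I would instead invoke \cite[Lemma 5.1]{infinitePGCs} for a torsion realization. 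In either case a locally torsion realization of $\Xi$ follows.

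\textbf{Main obstacle.} The hardest step is the forward case analysis: turning the finite local constraints into exactly the global conditions (a) and (b). In particular, the ``moreover'' clause of (a) and the dichotomy (i)/(ii) of (b) require checking that no infinite configuration evades them. My plan there is a compactness-style argument: any violation is witnessed by finitely many vertices, so it already appears in some finite $H$ obtained from \cite[Lemma 2.6]{infinitePGCs}, where \cite[Theorem 2.2.8]{alland} rules it out.
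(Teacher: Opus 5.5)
Your proposal has a genuine gap in each direction. The missing construction for (2b)(ii) is the more serious one. In the forward direction, several of your steps are false for $\operatorname{PSL}(2,13)$.

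\textbf{Backward direction, condition (2b)(ii).} You expect the lists $[3]$, $[3,7]$, $[3,7,13]$ of \thref{psl213fpinfogeneral} to produce the outside adjacencies that (ii) requires, but they cannot.
The only list in which elements of order $7$ act without fixed points is $[3]$, and in that list elements of order $2$ (and $13$) are also fixed-point-free. The lists for $\operatorname{PSL}(2,13)$ and $\operatorname{Aut}(\operatorname{PSL}(2,13))$ always contain $7$.
So with $a\mapsto 2$ and $c\mapsto 7$, every outside vertex that \thref{13pslGeneral} or \cite[Lemma 5.1]{infinitePGCs} attaches to $c$ is automatically attached to $a$. That yields exactly (i). The genuinely new instances of (ii), where $N(c)\setminus X\neq\varnothing$ but $N(a)=\varnothing$, stay out of reach.

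The paper uses a direct-product trick instead:
\begin{enumerate}
\item Let $\Xi'$ be $\Xi$ together with the edges $a-v$ for all $v\in N(c)\setminus X$.
\item $\Xi'$ satisfies (2b)(i). No new triangle appears, because $N(c)\setminus X$ is independent: a triangle $\{c,v,w\}$ would have to contain the edge $b-c$. Also, $a$ was isolated, so it can be recolored.
\item Realize $\Xi'$ by some $G$. Then $G\times C_2$ realizes $\Xi$: the direct factor $C_2$ deletes every edge at $2=a$, and $N(a)=\varnothing$ in $\Xi$.
\end{enumerate}

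\textbf{Forward direction.} First, you cannot get ``every triangle lies in $\Xi[X]$'' from \cite[Lemma 4.2]{infinitePGCs} as in \thref{psl216theorem}. For $\operatorname{PSL}(2,13)$ this is false, which is why (2b) only asks that triangles contain $b-c$. When $7-13$, possibly together with $3-13$, are the only surviving edges of $\pgc(\operatorname{PSL}(2,13))$ (the paper's Cases 4 and 5), every triangle is $\{7,13,p\}$ with $p\notin X$. The correct tool is \thref{generalized224}.

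Second, your split on whether $2$ is isolated in $\Xi[X]$ is wrong. For $C_3\times\operatorname{PSL}(2,13)$ the prime graph complement is the triangle $\{2,7,13\}$ plus the isolated vertex $3$. Here $2$ is not isolated, yet $\Xi[X]$ is disconnected, so (2a) fails. This configuration is (2b)(ii) with $a=3$, $d=2$, $b=13$, $c=7$ (the paper's Case 3a).

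Third, your fixed labeling does not always work. When $2$ is isolated and $13$ has no outside neighbors, the paper takes $b=7$, $c=13$. With your fixed $b=13$, $c=7$, the inclusion $N(c)\setminus\{b\}\subseteq N(b)$ would require $7$ to have no outside neighbors, and none of your listed facts gives that.

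The paper handles all of this by an explicit case analysis on which edges of $\pgc(\operatorname{PSL}(2,13))$ are missing. It rules out Case 3b and part of Case 4 with \thref{225generalized}, \thref{generalized224} and \cite[Lemma 2.2.7]{alland}.

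Your compactness fallback could replace that analysis only if you knew the finite result holds with $X=\pi(\operatorname{PSL}(2,13))$ and one of finitely many labelings. The statement of \cite[Theorem 2.2.8]{alland} alone gives, for each finite piece, some $4$-set and labeling, and these need not be coherent across pieces.
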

\begin{proof}
    Suppose $G$ is $\operatorname{PSL}(2, 13)$-solvable. By \cite[Lemma 2.2.2]{alland} and \cite[Lemma 2.8]{infinitePGCs}, $\pgc(G)$ is 3-colorable. If $\pgc(G)$ does not contain a triangle, (1) is satisfied, so assume $\pgc(G)$ does contain a triangle. Thus, $G$ is strictly $\operatorname{PSL}(2, 13)$-solvable by \cite[Proposition 3.9]{infinitePGCs}. By \cite[Lemma 2.2.1]{alland} and \cite[Lemma 4.1]{infinitePGCs}, $3-p\not\in\pgc(G)$ for all $p\in\pi(G)\setminus\pi(\operatorname{PSL}(2, 13))$. We consider cases based on the number of missing edges:

    \hfill\break
    \textbf{Case 1:} No edges of $\pgc(\operatorname{PSL}(2, 13))$ are missing. For contradiction, suppose there existed an edge $2-p$ or $7-p$ in $\pgc(G), p\in\pi(G)\setminus\pi(\operatorname{PSL}(2, 13))$. Then, by \cite[Lemma 2.6]{infinitePGCs}, there exists a finite strictly $\operatorname{PSL}(2, 13)$-solvable group $H$ with $\pgc(G)[\pi(\operatorname{PSL}(2, 13))\cup\{p\}]=\pgc(H)$, a contradiction to \cite[Lemma 1.1.3]{alland} and \cite[Lemma 2.2.5]{alland}, as $H$ satisfies the hypotheses of \cite[Theorem A]{alland}. Notice that only 13 can have edges to primes outside of $\pi(\operatorname{PSL}(2, 13))$, so given any subgraph $\Delta\subseteq\pgc(G)$ containing $\pi(\operatorname{PSL}(2, 13))$, all triangles of $\Delta$ are contained in $\pgc(G)[\pi(\operatorname{PSL}(2, 13))] = \Delta[\pi(\operatorname{PSL}(2, 13))]$ by \cite[Lemma 2.2.3]{alland}. Thus, (2a) is satisfied.

    \hfill\break
    \textbf{Case 2:} One edge of $\pgc(\operatorname{PSL}(2, 13))$ is missing. By \cite[Lemma 2.6]{infinitePGCs}, there exists a finite strictly $\operatorname{PSL}(2, 13)$-solvable subgroup of $G$, $H$, such that $\pgc(H) = \pgc(G)[\pi(\operatorname{PSL}(2, 13))]$. By \cite[Lemma 2.2.7]{alland} and \cite[Proposition 2.2.4]{alland}, $2-13, 3-13\in\pgc(G)$. Then, 2 is not isolated from the rest of $\pi(\operatorname{PSL}(2, 13))$, so by \thref{225generalized} and \thref{221Analog}, there exist no edges from 2 and 7 to $\pi(G)\setminus\pi(\operatorname{PSL}(2, 13))$. Thus, primes 2, 3, and 7 do not have edges to primes in $\pi(G)\setminus\pi(\operatorname{PSL}(2, 13))$. Then, the only prime in $\pi(\operatorname{PSL}(2, 13))$ which can have edges to primes outside of $\pi(\operatorname{PSL}(2, 13))$ is 13, so by \thref{generalized224}, the possible triangles are exactly one of $\{2, 7, 13\}$ or $\{3, 7, 13\}$, so $\pgc(G)$ satisfies (2a).

    \hfill\break
    \textbf{Case 3:} Two edges of $\pgc(\operatorname{PSL}(2, 13))$ are missing. We consider subcases on the edges $2-7$ and $2-13$.
    \hfill\break
    \textit{Case 3a:} $2-7, 2-13\in\pgc(G)$. By \thref{225generalized} and \thref{221Analog}, there are no edges from $\{2, 3, 7\}$ to $\pi(G)\setminus\pi(\operatorname{PSL}(2, 13))$. Notice that $3-7, 3-13$ must have been removed, so $\pgc(G)$ satisfies condition 2(b)(ii) with the labeling $3\mapsto a, 7\mapsto c, 13\mapsto b,$ and $2\mapsto d$.
\hfill\break
\textit{Case 3b: Exactly one of 2-7 or 2-13 is in $\pgc(G)$. } We will show that this case cannot occur by showing that it leads to a contradiction. By \thref{225generalized} and \thref{221Analog}, there are no edges from $\{2, 3, 7\}$ to $\pi(G)\setminus\pi(\operatorname{PSL}(2, 13))$. By \thref{generalized224}, all triangles must be contained within $\pgc(G)[\pi(\operatorname{PSL}(2, 13))]$. But then, because an edge has been removed from both triangles of $\pgc(\operatorname{PSL}(2, 13))$, $\pgc(G)$ is triangle-free, a contradiction.
\hfill\break
\textit{Case 3c: $2-7, 2-3\not\in\pgc(G)$}. Notice that $N(2)\cap\pi(\operatorname{PSL}(2, 13)) = \varnothing$. We first consider the case where there are no edges from $13$ to primes in $\pi(G)\setminus\pi(\operatorname{PSL}(2,13))$. Notice that $\pgc(G)$ satisfies 2(b)(i) with the labeling $2\mapsto a, 7\mapsto b, 13\mapsto c$, and $3\mapsto d$. Now consider the case where 2 is isolated from $\pgc(G)\setminus\pi(\operatorname{PSL}(2, 13))$ but 13 is not. By \thref{generalized226}, $\pgc(G)$ satisfies 2(b)(ii) with the labeling $2\mapsto a, 13\mapsto b, 7\mapsto c$, and $3\mapsto d$. Similarly, if $2$ and 13 both have edges to $\pi(G)\setminus\pi(\operatorname{PSL}(2, 13))$, by \thref{generalized226}, $\pgc(G)$ satisfies 2(b)(i) with the labeling $2\mapsto a, 13\mapsto b, 7\mapsto c,$ and $3\mapsto d$.

\hfill\break
\textbf{Case 4:} Three edges of $\pgc(\operatorname{PSL}(2, 13))$ are missing. Let $A$ be the triangle contained in $\pgc(G)$. Then, by \thref{generalized224}, $7-13\in A$. Thus, we consider possibilities based on which other edge remains. If one of $2-7, 2-13$ is in $\pgc(G)$, then by \thref{225generalized}, there exist no edges $7-p$ in $\pgc(G)$ with $p\in\pi(G)\setminus\pi(\operatorname{PSL}(2, 13))$. This contradicts the existence of a triangle in $\pgc(G)$ by \thref{generalized224}. By \cite[Lemma 2.6]{infinitePGCs}, there exists a finite strictly $\operatorname{PSL}(2, 13)$-solvable $H\leq G$ such that $\pgc(H) = \pgc(G)[\pi(\operatorname{PSL}(2, 13))]$. $3-7\not\in\pgc(H) = \pgc(G)[\pi(\operatorname{PSL}(2, 13))]$ by \cite[Lemma 2.2.7]{alland}. Thus, the only remaining case is the case where $3-13\in \pgc(G)$. Notice then that $N(2)\cap\pi(\operatorname{PSL}(2, 13))=\varnothing$, so for a triangle to exist, $13-p\in\pgc(G)$ for some $p\in\pi(G)\setminus\pi(\operatorname{PSL}(2, 13))$. Let $A$ be a triangle of $\pgc(G)$. By \cite[Lemma 2.6]{infinitePGCs}, there exists a finite strictly $\operatorname{PSL}(2, 13)$-solvable group $H\leq G$ such that $\pgc(H) = \pgc(G)[\pi(\operatorname{PSL}(2, 13))\cup A]$. $\pgc(H)$ contains a triangle, so by \cite[Proposition 2.2.4]{alland}, $H$ satisfies the hypothesis of \cite[Theorem A]{alland}. Thus, by \cite[Lemma 2.2.7]{alland}, $3-7\not\in\pgc(H) = \pgc(G)[\pi(\operatorname{PSL}(2, 13))\cup A]$. If $2-q\in\pgc(G)$ for some $q\in\pi(G)\setminus\pi(\operatorname{PSL}(2, 13))$ then $\pgc(G)$ satisfies 2(b)(i). If no such edge exists, by \thref{generalized226}, $\pgc(G)$ satisfies 2(b)(ii). In both cases, the labeling is given by $2\mapsto a, 13\mapsto b, 7\mapsto c, 3\mapsto d$.

\hfill\break
\textbf{Case 5:} Four edges of $\pgc(\operatorname{PSL}(2, 13))$ are missing. By \thref{generalized224}, the only remaining edge is $13-7$, so $N(2)\cap\pi(\operatorname{PSL}(2, 13)) = \varnothing$. Also, $13-p\in\pgc(G)$ for some $p\in\pi(G)\setminus\pi(\operatorname{PSL}(2, 13))$ to form a triangle. By \thref{generalized226}, if there does not exist an edge from 2 to a prime in $\pi(G)\setminus\pi(\operatorname{PSL}(2, 13))$, then $\pgc(G)$ satisfies 2(b)(ii), but if such an edge exists, then $\pgc(G)$ satisfies 2(b)(i) -- in both cases, with the labeling $2\mapsto a, 13\mapsto b, 7\mapsto c, $ and $3\mapsto d$.

By \thref{generalized224}, these cases are exhaustive.

\hfill\break
We now consider the backwards direction. If $\Xi$ satisfies (1), then by \cite[Theorem 3.11]{infinitePGCs}, $\Xi$ is realized by a torsion solvable group with the finite Frobenius property, so we may assume that $\Xi$ contains a triangle. We proceed by cases:

\hfill\break
\textbf{Case 1:} $\Xi$ matches condition 2(a). Notice that, depending on $\Xi[X]$, there exists a graph isomorphism from $E = \operatorname{PSL}(2, 13)$ to $\Xi[X]$ or $E = \operatorname{Aut(PSL}(2, 13))$ to $\Xi[X]$. If $N[X] = X$, then by \cite[Lemma 5.1]{infinitePGCs}, $\Xi$ is the prime graph complement of a $\operatorname{PSL}(2,13)$-solvable group. If there exists $v\in X$ such that $v-w\in\Xi$ for some $w\in\Xi\setminus X$, we may construct a graph isomorphism from $\Xi[X]$ to $\pgc(\operatorname{PSL}(2, 13))$ or $\pgc(\operatorname{Aut(PSL(}2, 13)))$ such that $\varphi(v) = 13$ by the adjacency condition in 2(a). Thus, letting $E $ be $\operatorname{PSL}(2, 13)$ or $\operatorname{Aut}(\operatorname{PSL}(2, 13))$ as appropriate, we may apply \cite[Lemma 5.1]{infinitePGCs} to \thref{psl213fpinfogeneral} to get a $\operatorname{PSL}(2, 13)$-solvable group $G$ realizing $\Xi$.

\hfill\break
\textbf{Case 2:} $\Xi$ matches condition 2(b)(i). Notice that, because $\Xi$ contains a triangle, there must be at least one edge in $\Xi[X]$. By the graph structures given in \cite[Appendix A]{alland}, there exists a graph isomorphism $\varphi:\Xi[X]\to\pgc(E)$ where $E$ is the appropriate one of $2.\operatorname{PSL}(2, 13), \mathbb{F}_{3}^{36}\rtimes2.\operatorname{PSL}(2, 13)$, or $C_3\times 2.\operatorname{PSL}(2, 13)$ such that $\varphi(a) = 2, \varphi(b)=13, \varphi(c) = 7$, and $\varphi(d) = 3$. By \thref{221Analog} and our assumptions on a possible coloring of $\Xi$, we can see that $\Xi$ satisfies the hypotheses of \cite[Lemma 5.1]{infinitePGCs} if $E = 2.\operatorname{PSL}(2, 13)$ or \thref{13pslGeneral} if $E = \mathbb{F}_{3}^{36}\rtimes2.\operatorname{PSL}(2, 13)$ or $E = C_3\times 2.\operatorname{PSL}(2, 13)$ by using the appropriate fixed point information in \thref{psl213fpinfogeneral}. The appropriate representations to use in each of these cases are described in full detail in the proof of \cite[Theorem 2.2.8]{alland}.

\hfill\break
\textbf{Case 3:} $\Xi$ matches condition 2(b)(ii). Build $\Xi'$ which satisfies 2(b)(i) by adding the edge $a-v$ to $\Xi$ for all $v\in N(c)\setminus X$. Let $G$ be a $\operatorname{PSL}(2, 13)$-solvable group realizing $\Xi'$ according to case 2(b)(i). Then, $G\times C_2$ realizes $\Xi$.
\end{proof}
\section{Reduction Results}\label{sec7}
\begin{proposition}\thlabel{infiniteCard}
   Let $\mathcal{L}$ be a countable language and $\Phi$ be a set of $\mathcal{L}$-sentences. Let $G$ be an infinite group, considered as a $\mathcal{L}$-structure $\mathcal{G}$, such that $\mathcal{G}\vDash\Phi$. For each infinite cardinal $\kappa$, there exists a group $H$ of order $\kappa$ such that $\mathcal{H}\vDash\Phi$ and $\pgc(G) = \pgc(H)$.
\end{proposition}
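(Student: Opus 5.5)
The plan is to apply the Löwenheim--Skolem theorems to $\mathcal{G}$ and then transfer the prime graph complement using \thref{elmequiv}. Since $\mathcal{L}$ is countable, I will assume, as the phrasing "considered as a $\mathcal{L}$-structure" suggests, that $\mathcal{L}$ contains the group symbols (or at least the binary operation) interpreted as the group operation of $G$. Then the theory of groups holds in $\mathcal{G}$, and it will hold in anything elementarily equivalent to $\mathcal{G}$.

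Fix an infinite cardinal $\kappa$ and split into two cases according to how $\kappa$ compares with $|G|$.

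\emph{Case $\kappa\geq|G|$.} Apply the upward Löwenheim--Skolem theorem to the infinite structure $\mathcal{G}$. Concretely, expand $\mathcal{L}$ by $\kappa$ new constant symbols $c_\alpha$ and add the sentences $c_\alpha\neq c_\beta$ for $\alpha\neq\beta$ to the elementary diagram of $\mathcal{G}$. Every finite subset of this theory is satisfiable in $\mathcal{G}$ because $G$ is infinite, so compactness gives a model. Since the language now has size $\kappa$, downward Löwenheim--Skolem then gives a model $\mathcal{H}$ of cardinality exactly $\kappa$. This $\mathcal{H}$ satisfies $\mathcal{G}\preceq\mathcal{H}$.

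\emph{Case $\kappa<|G|$.} Apply downward Löwenheim--Skolem. Since $|\mathcal{L}|\leq\aleph_0\leq\kappa$, there is $\mathcal{H}\preceq\mathcal{G}$ with $|\mathcal{H}|=\kappa$.

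In either case $\mathcal{H}\equiv\mathcal{G}$. Two consequences follow:
\begin{enumerate}
\item Every sentence of $\Phi$ is true in $\mathcal{G}$, so $\mathcal{H}\vDash\Phi$.
\item The group axioms are first-order $\mathcal{L}$-sentences true in $\mathcal{G}$, so the reduct $H$ of $\mathcal{H}$ to the group symbols is a group of order $\kappa$.
\end{enumerate}
Finally, \thref{elmequiv} applies to the two group structures $\mathcal{G}\equiv\mathcal{H}$ and gives $\pgc(H)=\pgc(G)$. Note that the vertex set and the edges are each determined by the sentences asserting the existence of elements of order $p$ and of order $pq$, which is exactly what that lemma uses.

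The only delicate point is bookkeeping in the upward case. The language must stay of size $\kappa$ so that we get cardinality exactly $\kappa$ and not merely at least $\kappa$. Also, the model is obtained for the expanded language, so it must be restricted back to $\mathcal{L}$ before applying \thref{elmequiv}. Both steps are standard. Countability of $\mathcal{L}$ is what allows every infinite $\kappa$, including $\aleph_0$, to be reached.
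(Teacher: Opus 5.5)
Your proposal is correct and follows the same route as the paper. The paper's proof is a single line: the result follows from L\"owenheim--Skolem together with the fact that $\pgc(G)$ is determined by the theory of $G$, which is \thref{elmequiv}. You have filled in the details the paper leaves implicit: the upward/downward case split, keeping the language size at $\kappa$ in the upward case, and restricting back to $\mathcal{L}$ before applying \thref{elmequiv}.
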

\begin{proof}
This follows from Lowenheim-Skolem and the fact that $\pgc(G)$ is encoded in the theory of $G$.

\end{proof}

\begin{corollary}
    Let $\mathcal{T}$ be a finite (possibly empty) collection of nonabelian finite simple group, and $G$ be a $\mathcal{T}$-solvable group that realizes a labeled graph $\Xi$. For any infinite cardinality $\kappa$, there is a $\mathcal{T}$-solvable group of cardinality $\kappa$ realizing $\Xi$.
\end{corollary}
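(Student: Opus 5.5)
The plan is to reduce the corollary to \thref{infiniteCard} by making $\mathcal{T}$-solvability first-order expressible. The subtlety is that $\mathcal{T}$-solvability is not obviously a first-order property in $\mathcal{L}_{gp}$: it asserts the existence of a subnormal series. We handle this by expanding the language with unary predicates naming the terms of the series, exactly as in \thref{axiomatizable}.

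First, fix a standard subnormal series $1 = N_0 \unlhd \cdots \unlhd N_k = G$ witnessing that $G$ is $\mathcal{T}$-solvable, where $k = n(G)$ (any finite length works). Let $\mathcal{L} = \mathcal{L}_{gp} \cup \{N_i : 0 < i < k\}$. This language is finite, hence countable. View $G$ as an $\mathcal{L}$-structure $\mathcal{G}$ by interpreting each $N_i$ as the corresponding subgroup. Since $\mathcal{T}$ is finite, \thref{axiomatizable} provides a single sentence $\sigma$ with the property that an $\mathcal{L}$-structure satisfies $\sigma$ exactly when it is a group in $\mathcal{C}_{\mathcal{T},k}$ with the $N_i$ as witnessing series. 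In particular $\mathcal{G} \vDash \sigma$. If $\mathcal{T}$ is empty, the same construction with only the "abelian factor" option gives solvability of length $k$.

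Second, apply \thref{infiniteCard} with $\Phi = \{\sigma\}$. This requires $G$ to be infinite. If $G$ is finite, first pass to an infinite $\mathcal{T}$-solvable group with the same labeled prime graph complement, for instance $G \times \mathbb{Z}$. This group has the same elements of finite order, since any element of finite order has trivial $\mathbb{Z}$-coordinate. It is still $\mathcal{T}$-solvable, because we can append the abelian factor $\mathbb{Z}$ to the series, giving length $k+1$, and then redo the first step with $k+1$. With $G$ infinite, \thref{infiniteCard} yields, for each infinite cardinal $\kappa$, an $\mathcal{L}$-structure $\mathcal{H}$ of cardinality $\kappa$ with $\mathcal{H} \vDash \sigma$ and $\pgc(H) = \pgc(G)$. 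By the choice of $\sigma$, the reduct $H$ is a group with a standard subnormal series, and hence $H$ is $\mathcal{T}$-solvable.

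The main point to check is that \thref{infiniteCard} really preserves the \emph{labeled} graph, not just its isomorphism type. Its proof obtains $\mathcal{H}$ via Löwenheim–Skolem, so $\mathcal{H} \equiv \mathcal{G}$, and \thref{elmequiv} then gives literal equality of the labeled prime graph complements, since the presence of elements of order $p$ and $pq$ is expressed by sentences. I expect the only other care needed is the finite case, handled above by the direct product with $\mathbb{Z}$.
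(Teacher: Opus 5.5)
Your proposal is correct and follows the paper's route: make $\mathcal{T}$-solvability expressible by the sentence from \thref{axiomatizable} in the language with predicates for the series, then apply \thref{infiniteCard}. The paper also adds the element-order sentences to $\Phi$, which is redundant since \thref{infiniteCard} already gives $\pgc(H)=\pgc(G)$. Your preliminary step, replacing a finite $G$ by $G\times\mathbb{Z}$ with a series of length $n(G)+1$, is a genuine improvement: \thref{infiniteCard} requires $G$ to be infinite, and the paper's proof does not address finite $G$.
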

\begin{proof}
    By \thref{infiniteCard}, it suffices to show that there exists a set $\Phi$ of $\mathcal{L}$-sentences that axiomatizes the class of groups $H$ with a subnormal series $1=N_0\unlhd\cdots\unlhd N_{n(G)} = H$ where each factor is either abelian or isomorphic to an element of $\mathcal{T}$. Let $\varphi$ be the sentence axiomatizing $\mathcal{C}_{\mathcal{T},n(G)}$, from \thref{axiomatizable}, and let 
    \begin{align*}
        \Phi = \{\varphi\}\cup\{\exists x\left(\left(\bigwedge_{i=1}^{r-1}x^i\neq 1\right)\land x^r = 1\right): r\in\mathbb{N}^+\text{ and there exists a }g\in G, o(g) = r\}\\
        \cup \{\forall x\left(x^r \neq 1\right): r\in\mathbb{N}^+\text{ for all }g\in G, o(g) \neq  r\}.
    \end{align*}
    Notice that $\Phi$ axiomatizes the class of $\mathcal{T}$-solvable groups with prime graph $\pgc(G)$. Thus, by \thref{infiniteCard}, the result follows.
\end{proof}
\begin{remark}
    Notice that in the above, the subclass of $\mathcal{C}_{\mathcal{T}, k}$ that realizes a specific labeled graph $\Xi$ is also axiomatizable, and thus elementary. However, depending on the graph, it may fail to  be basic elementary.
\end{remark}
We have proven that, given a graph that is realized as the prime graph of a group, the class of nonisomorphic groups that realize the graph is proper by \thref{infiniteCard}. This makes the question of how many nonisomorphic possibly infinite groups realize a given graph uninteresting, however, we may still ask about the number of nonisomorphic groups of a certain cardinality that realize a given graph.
\begin{proposition}
    Let $\Xi_L$ be a labeled graph that is realizable as the prime graph complement of a group $G$ such that $\omega(G)$ is infinite. Then, there exist at least three nonisomorphic but elementarily equivalent countable groups with prime graph complement $\Xi_L$.
\end{proposition}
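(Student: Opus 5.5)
We work with the complete theory $\mathrm{Th}(G)$ in $\mathcal{L}_{gp}$. It is countable and complete, and since $\omega(G)$ is infinite, $G$ is infinite. So $\mathrm{Th}(G)$ has infinite models. By \thref{elmequiv}, every model of $\mathrm{Th}(G)$ has prime graph complement $\Xi_L$; in fact every model has the same spectrum $\omega(G)$, because each statement ``there is an element of order $n$'' is first order. The statement therefore reduces to showing that $\mathrm{Th}(G)$ has at least three pairwise nonisomorphic countable models. By Vaught's never-two theorem, a countable complete theory with infinite models never has exactly two countable models. So it suffices to prove that $\mathrm{Th}(G)$ is not $\aleph_0$-categorical.

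To rule out $\aleph_0$-categoricity we use the Ryll-Nardzewski theorem. If $\mathrm{Th}(G)$ were $\aleph_0$-categorical, then for each $k$ there would be only finitely many complete $k$-types, and in particular only finitely many complete $1$-types over $\varnothing$. Fix a countable model $M$ of $\mathrm{Th}(G)$. Since $\omega(M)=\omega(G)$ is infinite, for each $n\in\omega(G)$ choose $g_n\in M$ with $o(g_n)=n$. The formula ``$x^n=1\land\bigwedge_{1\le j<n}x^j\ne 1$'' belongs to $\mathrm{tp}(g_n)$ and to no other $\mathrm{tp}(g_m)$ with $m\neq n$. Hence the types $\mathrm{tp}(g_n)$, for $n\in\omega(G)$, are pairwise distinct, so there are infinitely many $1$-types over $\varnothing$ that are realized. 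This contradicts Ryll-Nardzewski, so $\mathrm{Th}(G)$ is not $\aleph_0$-categorical.

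To finish, Löwenheim--Skolem (as in \thref{infiniteCard}) gives at least one countable model, and non-categoricity gives at least two nonisomorphic ones. Vaught's theorem then upgrades this to at least three. These models are nonisomorphic and countable, they are elementarily equivalent because they all model the complete theory $\mathrm{Th}(G)$, and by \thref{elmequiv} each has prime graph complement $\Xi_L$.

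The main obstacle is not technical. It is making sure the cited theorems apply exactly as stated. Vaught's theorem needs a complete theory in a countable language with infinite models, and we must confirm that the models really are groups; they are, since the group axioms belong to $\mathrm{Th}(G)$. We should also check that the labeled graph, not just its isomorphism type, is preserved; this is exactly the content of the proof of \thref{elmequiv}, which preserves element orders prime by prime.
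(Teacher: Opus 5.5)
Your proof is correct and follows the paper's own route: a countable model of $\mathrm{Th}(G)$, failure of $\aleph_0$-categoricity because there are infinitely many element orders, then Vaught's never-two theorem, with \thref{elmequiv} transferring the labeled graph. Your Ryll-Nardzewski argument, using the pairwise distinct types $\mathrm{tp}(g_n)$, fills in the step the paper only asserts. One small nit: elementary equivalence preserves only the finite part of the spectrum, not whether elements of infinite order exist, but the finite orders are all your argument uses.
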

\begin{proof}
    Let $G$ be a group that realizes $\Xi_L$. Let $X\subseteq G$ be a countable set. By Lowenheim-Skolem, there exists a $H_1\preceq G$ such that $X\subseteq H_1$ and $|H_1| = \aleph_0$. Then, because $H_1\equiv G$, $\pgc(G) = \pgc(H_1)$ by \thref{elmequiv}. 

    Let $T = \operatorname{Th}(H_1,\cdot, 1, ^{-1})$. By the fact that $\omega(G)$ is infinite, $H_1$ cannot be $\aleph_0$-categorical, so there exists a $H_2\not\cong H_1$ such that $H_1\equiv H_2$ and $|H_1|=|H_2|=\aleph_0$. By \cite[Theorem 4.4.6]{marker}, there exists a third $H_3\vDash T$ with $|H_3|=\aleph_0$ and $H_1\equiv H_2\equiv H_3\equiv G$ such that $H_3$ is not isomorphic to $H_1$ or $H_2$. Thus, by \thref{elmequiv}, $\pgc(H_1) = \pgc(H_2) = \pgc(H_3) = \pgc(G)$.
\end{proof}
\begin{corollary}
    Let $\Xi_L$ be an infinite labeled graph that is realizable as the prime graph complement of a group. Then, there exist at least three nonisomorphic but elementarily equivalent countable groups with prime graph complement $\Xi_L$.
\end{corollary}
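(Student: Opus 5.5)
The plan is to reduce the corollary to the preceding proposition. That requires showing that any group $G$ realizing an infinite labeled graph $\Xi_L$ has infinite spectrum $\omega(G)$.

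First, since $\Xi_L$ is infinite, its vertex set $V(\Xi_L)=\pi(G)$ is infinite. For each vertex $p\in\pi(G)$ there is by definition an element $g_p\in G$ with $o(g_p)=p$, so $p\in\omega(G)$. The primes are distinct integers, so $\pi(G)\subseteq\omega(G)$ already forces $\omega(G)$ to be infinite. No information about edges is needed here, only the vertices.

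Second, we apply the preceding proposition to $G$. It yields three nonisomorphic, elementarily equivalent countable groups $H_1,H_2,H_3$ with $\pgc(H_i)=\pgc(G)=\Xi_L$. The equality of prime graph complements as labeled graphs comes from \thref{elmequiv}, so the conclusion holds for the labeled graph $\Xi_L$ and not merely up to isomorphism.

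There is no real obstacle. The only point needing care is that the proposition's hypothesis, that $\omega(G)$ be infinite, refers to the spectrum rather than the graph, and the inclusion $\pi(G)\subseteq\omega(G)$ handles this. One might also worry whether the underlying non-$\aleph_0$-categoricity argument in the proposition needs a countable language. It does not: we work in $\mathcal{L}_{gp}$, which is finite, so Ryll-Nardzewski and \cite[Theorem 4.4.6]{marker} apply as used there.
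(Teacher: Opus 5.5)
Your proposal is correct and follows essentially the paper's argument. The paper's proof repeats the proof of the preceding proposition inline: L\"owenheim--Skolem gives a countable elementary substructure, infinitely many element orders rule out $\aleph_0$-categoricity via Ryll-Nardzewski, and Vaught's never-two theorem gives a third model; you cite that proposition directly instead, and your explicit check that $\pi(G)\subseteq\omega(G)$ forces $\omega(G)$ to be infinite is a step the paper leaves implicit when it enumerates the element orders $n_1,n_2,\dots$.
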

\begin{proof}
    Let $H$ be a group that realizes $\Xi$. Let $X\subseteq H$ be a countable set. By the Lowenheim-Skolem theorem, there exists a $G\preceq H$ such that $X\subseteq G$ and $|G| = \aleph_0$. Then, because $G\equiv H$, $\pgc(G) = \pgc(H)$ by \thref{elmequiv}. 

    Let $T = \operatorname{Th}(G,\cdot, 1, ^{-1})$. Enumerate the element orders of $G$ by $n_1,n_2,\cdots$. Then, let $g_1,g_2,\cdots$ be a sequence of elements of $G$ such that $g_i$ has order $n_i$ for each $i\in\mathbb{N}^+$. By \cite[Theorem 4.4.1]{marker}, there exist at least two $\mathcal{L}_{gp}$-structures $H_1, H_2$ such that $H_1\vDash T, H_2\vDash T$ and $|H_1|,|H_2|\leq\aleph_0$. By \cite[Theorem 4.4.6]{marker}, there exists a third $H_3\vDash T$ with $|H_3|\leq\aleph_0$. Notice that $H_1\equiv H_2\equiv H_3\equiv G$, so by \thref{elmequiv}, $\pgc(H_1) = \pgc(H_2) = \pgc(H_3) = \pgc(G)$.
\end{proof}
\begin{theorem}
   Let $\Xi_L$ be an infinite labeled graph. Let $T$ be a consistent $\mathcal{L}_{gp}$-theory extending the theory of groups with quantifier elimination such that, for every model $\mathcal{G}\vDash T, \omega(G)$ is infinite. Then, there exists a torsion group $G$ with $\pgc(G) = \Xi_L$.
\end{theorem}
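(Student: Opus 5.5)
The plan is to show that the hypothesis on $T$ can never be met, so that the statement holds vacuously. The reason is that the conclusion concerns an arbitrary infinite labeled graph $\Xi_L$, while $T$ is not tied to $\Xi_L$ in any way. Every model of $T$ has a prime graph determined by $T$ itself (by \thref{elmequiv}, when $T$ is complete). So no model of $T$ can serve as the torsion group we want, and the only way the theorem could have a proof that genuinely uses $T$ is through a structural consequence of quantifier elimination. The natural route is therefore the classification of groups whose theory admits quantifier elimination in $\mathcal{L}_{gp}$, due to Cherlin and Felgner. I will take this classification as an external result, since it is not proved in the paper.

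Step 1 reduces to complete theories. Replace $T$ by $\operatorname{Th}(\mathcal{G})$ for some $\mathcal{G}\vDash T$; this theory still has quantifier elimination and still has only models with infinite spectrum.

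Step 2 recalls the classification. Up to the finitely many finite exceptions, a group with quantifier elimination in $\mathcal{L}_{gp}$ is abelian and of one of two forms:
\begin{itemize}
\item divisible, of the shape $\mathbb{Q}^{(\alpha)}\oplus\bigoplus_p \mathbb{Z}(p^\infty)^{(\beta_p)}$;
\item of bounded exponent, of the shape $\bigoplus_p \left(\mathbb{Z}/p\right)^{(\gamma_p)}\oplus\left(\mathbb{Z}/p^2\right)^{(\delta_p)}$ under suitable constraints;
\end{itemize}
or it is one of a small list of nilpotent class-$2$ groups of prime exponent.

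Step 3 examines which of these can have infinite spectrum $\omega(\mathcal{G})$. Every bounded-exponent case and every finite case has finite spectrum, so infinite spectrum forces the divisible case with $\beta_p\neq 0$ for infinitely many $p$, or with some $\mathbb{Z}(p^\infty)$ summand present.

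The main obstacle is here. One has to check whether pure quantifier elimination really fails for these divisible groups with torsion. The test case is $\mathbb{Q}/\mathbb{Z}$: there the formula $\exists y\,(y^p=x)$ is equivalent to $x=x$, but other existential formulas may not be quantifier-free definable. Concretely, I would try to exhibit two tuples with the same quantifier-free type but different types. If that fails, the hypothesis is consistent and the vacuity route is closed.

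In that case I would fall back on building the group directly, without using $T$. The idea is to take, over the finite cliques $C$ of the prime graph (the complement of $\Xi_L$), groups realizing each clique. These would be amalgamated in a torsion quotient of their free product, for instance a graph-of-groups or small-cancellation quotient that kills products of elements of coprime orders not forming a clique. The fact to verify is that finite-order elements of such a quotient are conjugate into the factors; this yields prime graph complement exactly $\Xi_L$, and an Ol'shanskii-type construction keeps the group torsion. I expect this fallback step, keeping the group torsion while creating no new element orders $pq$, to be the hardest part of the whole argument.
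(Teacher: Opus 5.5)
\textbf{There is a genuine gap: the vacuity route fails, and the fallback is unproved.}

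The hypothesis on $T$ is satisfiable. Take $T=\operatorname{Th}(\mathbb{Q}/\mathbb{Z})$ (or $\operatorname{Th}(\mathbb{Z}(p^\infty))$). Divisible abelian groups do admit quantifier elimination in $\mathcal{L}_{gp}$. Your own Step 2 already records this, so the hope in Step 3 contradicts Step 2. It is also easy to check directly. Divisible means injective, so an embedding of a finitely generated subgroup extends to its divisible hull. That hull is a direct summand, and the complements are matched by their $p$-ranks and torsion-free rank. This gives a back-and-forth system between $\omega$-saturated models. Every model $\mathbb{Q}/\mathbb{Z}\oplus\mathbb{Q}^{(\alpha)}$ contains elements of every finite order, so $\omega$ is infinite.

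So the statement is not vacuous. Your fallback would then have to show that \emph{every} infinite labeled graph is the prime graph complement of a torsion group. That fallback is only a sketch, and each substantive step is unverified:
\begin{itemize}
\item that the quotient stays torsion;
\item that every finite-order element is conjugate into a factor;
\item that no new orders $pq$ appear;
\item how to handle the prime $2$, since Ol'shanskii/Obraztsov-type machinery typically requires the absence of involutions.
\end{itemize}
This is a far stronger claim than anything in the paper; its Outlook lists classifying the labeled graphs realizable by torsion groups as open. As written, there is no proof.

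The missing idea is to use $T$ itself as the source of the torsion group. You are right that the statement does not tie $T$ to $\Xi_L$. But quantifier elimination in $\mathcal{L}_{gp}$ already makes $T$ complete, so your Step 1 is unnecessary. The reason is that every quantifier-free sentence is a Boolean combination of equations between closed terms, and all such terms equal $1$. Hence all models of $T$ share one prime graph complement by \thref{elmequiv}. The theorem is meant, and proved, for $\Xi_L$ equal to that graph.

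The paper's argument runs as follows. By the omitting types theorem, it suffices to show that $p(x)=\{x^n\neq 1:n\in\mathbb{N}^+\}$ is non-isolated. By quantifier elimination, a putative isolating formula can be replaced by a consistent disjunct of its disjunctive normal form. That disjunct is a conjunction of literals $x^{a}=1$ and $x^{b}\neq 1$ with $a,b\neq 0$. A positive literal is incompatible with $p(x)$. A conjunction of negative literals $x^{b_j}\neq 1$ is satisfied by an element of finite order larger than $\max_j|b_j|$, which exists because $\omega$ is infinite. Any countable model omitting $p(x)$ is then a torsion group with the required prime graph complement.
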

\begin{proof}
    Let $p(x)$ be the partial 1-type $\{x^n\neq 1:n\in\mathbb{N}^+\}$. By the omitting types theorem, it suffices to show that $p(x)$ is nonprincipal. For contradiction, suppose otherwise. Then, there exists a formula $\varphi(x)$ such that $T\vDash\forall x(\varphi(x)\to\psi(x))$ for each $\psi(x)\in p(x)$. By quantifier elimination, $\varphi(x)$ is logically equivalent to $\bigvee_{i=1}^n\bigwedge_{j_i=1}^{m_i}\varphi_{i,j}(x)$ for some $ n,m_i\in\mathbb{N}$ with $i\in[n]$ such that for each $i\in [n],j_i\in[m_i]$, $\varphi_{i,j_i}(x)$ is logically equivalent to one of the following:
    \begin{itemize}
        \item $x^{a_{i,j_i}} = 1$ for some $a_{i,j}\in\mathbb{Z}$.
        \item $x^{a_{i,j_i}}\neq 1$ for some $a_{i,j}\in\mathbb{Z}$.
    \end{itemize}
    Because $\bigvee_{i=1}^n\bigwedge_{j_i=1}^{m_i}\varphi_{i,j_i}(x)$ is equivalent to $\bigvee_{i=1,i\neq i_1,\cdots,i_l}^n\bigwedge_{j_i=1}^{m_i}\varphi_{i,j_i}(x)$ where $\bigwedge_{j_{i_k}=1}^{m_{i_k}}\varphi_{i_k,j_{i_k}}(x)$ is a contradiction in any model of $T$, we may assume that $\bigwedge_{j_i=1}^{m_i}\varphi_{i,j_i}(x)$ is not a contradiction for each $i\in[n]$. Then, $T\vDash\forall x(\bigwedge_{j_1=1}^{m_1}\varphi_{1,j_1}(x)\to \psi(x))$ for each $\psi(x)\in p(x)$. Notice that $\bigwedge_{j_1=1}^{m_1}\varphi_{1,j_1}(x)$ is logically equivalent to $\bigwedge_{j_1=1,j_1\neq {j_1}_1,\cdots, {j_1}_k}^{m_1}\varphi_{1,j_1}(x)$ where $\varphi_{1,{j_1}_i}(x)$ is a tautology for every $i\in[k]$, so we may assume each $\varphi_{1,j_1}(x)$ is not a tautology in every model of $T$. Thus, for each $j\in[m_1],\varphi_{1,j}(x)$ is logically equivalent to $x^{a_{1,j}}=1$ or $x^{a_{1,j}}\neq 1$ for some $a_{1,j}\in\mathbb{Z}\setminus\{0\}$.    
    If there exists a $j\in[m_1]$ such that $\varphi_{1,j}(x)$ is equivalent to $x^{a_{1,j}}=1$, then there exists some model $\mathcal{G}\vDash T$ such that $\mathcal{G}\vDash\exists x(x^{a_{1,j}}=1)$, and then $\mathcal{G}\vDash\exists x(x^{a_{1,j}}=1\to x^{a_{1,j}}\neq 1)$, a contradiction. Thus, $\bigwedge_{j=1}^{m_1}\varphi_{1,j_1}(x)$ is logically equivalent to $\bigwedge_{j=1}^{m_1}x^{a_{1,j}}\neq 1$ for $a_{1,j}\in\mathbb{Z}\setminus\{0\}$ for each $j\in[m_1]$. Then, let $\mathcal{G}\vDash T$. There exists a $g\in\mathcal{G}$ such that $o(g)>\max_{j\in[m_1]}|a_{1,j}|$, so $\mathcal{G}\vDash\bigwedge_{j=1}^{m_1}\varphi_{1,j}(g)\to g^{o(g)}\neq 1$, a contradiction. 
\end{proof}
\begin{corollary}
   Let $G$ be a group with infinite prime graph such that $\operatorname{Th}(G,\cdot,1,^{-1})$ has quantifier elimination. Then, there exists a torsion group $H$ with $\pgc(H) = \pgc(G)$.
\end{corollary}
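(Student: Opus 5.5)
The plan is to apply the preceding theorem with $T = \operatorname{Th}(G,\cdot,1,^{-1})$ and $\Xi_L = \pgc(G)$. First, $T$ is consistent, since $G\vDash T$. It extends the theory of groups, and by hypothesis it has quantifier elimination.

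Next I check that $\omega(\mathcal{G})$ is infinite for every $\mathcal{G}\vDash T$.
\begin{itemize}
\item Since the prime graph of $G$ is infinite, $\pi(G)$ is infinite.
\item For each $p\in\pi(G)$, the sentence $\exists x\left(x^p=1\land\bigwedge_{j=1}^{p-1}x^j\neq 1\right)$ holds in $G$, so it lies in $T$.
\item Hence every model of $T$ contains an element of order $p$ for each of the infinitely many $p\in\pi(G)$, so its spectrum is infinite.
\end{itemize}
This is essentially the argument of \thref{elmequiv}.

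The preceding theorem then yields a torsion group $H$. The one step I expect to need care is confirming that $\pgc(H)=\pgc(G)$, because the theorem's conclusion names $\Xi_L$ only loosely. Its proof applies the omitting types theorem to $T$, so $H$ is a model of $T$ omitting the type $\{x^n\neq 1:n\in\mathbb{N}^+\}$, and in particular $H\equiv G$. \thref{elmequiv} then gives $\pgc(H)=\pgc(G)$. Torsion holds because $H$ omits that type, so every element has finite order.

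One point I would state explicitly: the omitting types theorem needs $T$ to be a complete theory in a countable language. This holds here, since $\mathcal{L}_{gp}$ is finite and $\operatorname{Th}(G)$ is complete. With that noted, the corollary follows.
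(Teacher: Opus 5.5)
Your proposal is correct and is essentially the paper's intended argument; the paper states the corollary with no proof, as an immediate application of the preceding theorem to $T=\operatorname{Th}(G,\cdot,1,^{-1})$ after checking that every model realizes the infinitely many prime orders of $G$. Your extra step is a genuine improvement: reading off from the omitting-types proof that the torsion group $H$ is a model of $T$, hence $H\equiv G$ and $\pgc(H)=\pgc(G)$ by \thref{elmequiv}, supplies the link to $\Xi_L$ that the theorem's statement leaves unjustified (minor aside: omitting types only needs $T$ consistent in a countable language, not complete, though completeness holds here anyway).
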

\section{Outlook}\label{outlook}
There are many directions for future work, particularly for the reader interested in model theory. One could examine the prime graphs of groups of finite Morley rank, or look at when a graph is realizable by a group of finite Morley rank. One could also classify the prime graphs realizable by groups with $\omega$-stable theories. Another avenue for future work would be to completely classify the labeled graphs that are realizable by torsion groups.

One could classify the graphs that are realized as the prime graphs of locally finite groups. This particular direction of future work could help resolve \cite[19.52]{maslovanotebook}, an open question as to whether there exists a triangle-free but not three-colorable finite graph that is realized as the prime graph complement of a finite group. 

An approach that may be helpful for several of these directions would be to determine when a fragment of a consistent theory satisfiable in a group is satisfiable in a finite model, a model that omits all types corresponding to elements of infinite order, or satisfiable in a locally finite group.
\section*{Acknowledgments}
Thanks to Sylvia Carlisle, Thomas Keller, and Anand Pillay for their helpful comments on this draft.

\end{document}